\newcommand{\R}{\mathbb{R}}
\newcommand{\N}{\mathbb{N}}
\newcommand{\Q}{\mathbb{Q}}
\newcommand{\Z}{\mathbb{Z}}
\newcommand{\T}{\mathbb{T}}
\newcommand{\Cech}{{\v Cech}{} }
\newcommand{\OP}{\Omega_{\Phi}}
\newcommand{\larr}{\left( \begin{array}{c}}
\newcommand{\rarr}{\end{array} \right) }
\newcommand{\lsqarr}{\left[ \begin{array}{c}}
\newcommand{\rsqarr}{\end{array} \right]}
\newcommand{\arrow}{\rightarrow}
\newcommand{\inv}{\varprojlim}
\newcommand{\dir}{\varinjlim}
\def\coker{\mathop{\rm coker}\nolimits}
\def\rank{\mathop{\rm rank}\nolimits}
\newtheorem{theorem}{Theorem}
\newtheorem{Theorem}[theorem]{Theorem}
\newtheorem{corollary}[theorem]{Corollary}
\newtheorem{conjecture}[theorem]{Conjecture}
\newtheorem{lemma}[theorem]{Lemma}
\newtheorem{prop}[theorem]{Proposition}
\newtheorem{example}[theorem]{Example}
\numberwithin{equation}{section}
\begin{document}

\title[Factors of Pisot tiling spaces]{Factors of Pisot tillng spaces and the Coincidence Rank Conjecture}

\author[M. Barge]{Marcy Barge}
\address{Department of Mathematics\\
Montana State University\\
Bozeman, MT 59717-0240, USA.}
\email{barge@math.montana.edu}

\keywords{Pisot substitution, tiling space}
\subjclass[2000]{Primary 68R15 \& 05D10}
\date{October 14, 2012}

\maketitle

\begin{abstract} We consider the structure of Pisot substitution tiling spaces,
in particular, the structure of those spaces for which the translation action does not have pure discrete spectrum. Such a space is always a measurable $m$-to-one cover of an
action by translation on a group called the maximal equicontinuous factor. The integer $m$ is the coincidence rank of the substitution and equals one if and only if translation on the tiling space has pure discrete spectrum. By considering factors intermediate between a tiling space and its maximal equicontinuous factor, we establish a lower bound on the cohomology of a one-dimensional Pisot substitution tillng space with coincidence rank two and dilation of odd norm. The Coincidence Rank Conjecture, for coincidence rank two,
is a corollary.
 
\end{abstract}

\section{Introduction}

A subset $\mathcal{D}$ of $\R^n$ is a {\em Delone set} if it is relatively dense and uniformly discrete. Such a set has {\em finite local complexity} (FLC) if, for each $r>0$, there are, up to translation,  only finitely many subsets of $\mathcal{D}$ of diameter less than $r$ and is  {\em repetitive} if, for each $r$ there is an $R$ so that every Euclidean ball of radius $R$ intersected with $\mathcal{D}$ contains a translated copy of every subset of $\mathcal{D}$ of diameter less than $r$. Repetitive FLC Delone sets (RFLC) are commonly employed as models of the atomic structure of materials and there is a well-developed diffraction theory to go with these models (\cite{BMRS},\cite{L}, \cite{Le}). Of course if a Delone set is a periodic lattice, then its diffraction spectrum is pure point. But there is also a vast menagerie of non-periodic repetitive
FLC Delone sets with pure point diffraction spectra (the so-called quasicrystals) 
and it is a very subtle problem to predict which sets will have this property.

An effective method of encoding the structure of a single Delone set $\mathcal{D}$ is to 
form the space, called the {\em hull} of $\mathcal{D}$ and denoted $\Omega_{\mathcal{D}}$, consisting of all Delone sets that are, up to translation, locally indistinguishable from $\mathcal{D}$. The hull has a metric topology in which two Delone sets are close if a small translate of one is identical to the other in a large neighborhood of the origin. If $\mathcal{D}$ is RFLC then $\Omega_{\mathcal{D}}$ is compact and connected and $\R^n$ acts minimally on $\Omega_{\mathcal{D}}$ by translation. A highlight of this approach is that (in the substitutive context considered here) $\mathcal{D}$ has pure point diffraction spectrum if and only if the $\R^n$-action on $\Omega_{\mathcal{D}}$ has pure discrete 
dynamical spectrum (\cite{D},\cite{LMS}). 

In this article we consider Delone sets that arise from a substitution rule. In this case, the 
eigenfunctions of the $\R^n$-action on $\Omega_{\mathcal{D}}$ can all be taken continuous (\cite{S}) and then collectively determine a continuous map $g$ factoring the $\R^n$ action on $\Omega_{\mathcal{D}}$ to a translation action on a compact abelian group. The map $g$ is called the {\em maximal equicontinuous factor map} and 
it turns out that the $\R^n$-action on $\Omega_{\mathcal{D}}$ has pure discrete spectrum if and only if $g$ is a.e. one-to-one (\cite{BK}).

Two ingredients make up a substitution: a linear expansion $\Lambda:\R^n\to\R^n$; and a subdivision rule. For the $\R^n$-action on $\Omega_{\mathcal{D}}$ to have any discrete component in its spectrum, the eigenvalues 
of $\Lambda$ must satisfy a a {\em Pisot family} condition (\cite{LS}). In particular, if $\Lambda$ is simply a dilation by the number $\lambda>1$ (the ``self-similar" case), the $\R^n$-action will have a discrete component if and only if $\lambda$ is a Pisot number. (Recall that a Pisot number is an algebraic integer, all of whose algebraic conjugates lie strictly inside the unit circle.) For the purpose of determining which substitution Delone sets $\mathcal{D}$ have pure point diffraction spectrum, we are thus led to consider factors of the spaces $\Omega_{\mathcal{D}}$ for which the expansion is Pisot family.

For convenience, we will consider substitution tilings rather than substitution Delone sets.
(These notions are essentially equivalent - see \cite{LMS} and \cite{LW}.) A {\em tile} is a compact, topologically regular subset of $\R^n$ (perhaps ``marked"); for a given substitution, there are only finitely many translation equivalence classes of tiles and each such class is a {\em type}. A substitution expands tiles by a linear map $\Lambda$, then replaces the expanded tile by a collection of tiles. The {\em substitution matrix} is the $d\times d$ matrix $M$ ($d$ being the number of distinct tile types) whose $ij-$th entry is the number of tiles of type $i$ replacing an inflated tile of type $j$ and the {\em tiling space} associated with a substitution $\Phi$ is the collection $\OP$ of all tilings $T$ of $\R^n$ with the property that each finite patch of $T$ occurs as a sub patch of some repeatedly inflated and substituted tile. To pass from a tiling $T$ to a Delone set $\mathcal{D}$, simply pick a point from each tile in $T$: if the points picked are ``control points" (see \cite{LS}), $\mathcal{D}$ will be a substitution Delone set with linear expansion $\Lambda$. For a thorough 
treatment of the basics of substitution tiling spaces, see \cite{AP}.

A 1-dimensional Pisot substitution is {\em irreducible} if the characteristic polynomial of its substitution matrix is irreducible over $\Q$, and is {\em unimodular} if that matrix has determinant $\pm1$. The following has become known as the {\bf Pisot Substitution Conjecture}:

\begin{conjecture}\label{PC} (PSC) If $\Phi$ is a 1-dimensional irreducible, unimodular, Pisot substitution  then the $\R$-action on $\OP$ has pure discrete spectrum.

\end{conjecture}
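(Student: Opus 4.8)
The plan is to recast the conjecture entirely in terms of the coincidence rank. By the criterion of \cite{BK}, the $\R$-action on $\OP$ has pure discrete spectrum precisely when the maximal equicontinuous factor map $g$ is a.e.\ one-to-one, i.e.\ when the coincidence rank $m$ equals $1$. So the whole problem is to show that an irreducible, unimodular, Pisot substitution has $m=1$. The first thing I would record is that unimodularity is tailor-made for the machinery at hand: since $\det M = \pm 1$, the Pisot dilation $\lambda$ (a root of the irreducible characteristic polynomial of $M$) has norm $N(\lambda)=\det M=\pm 1$, which is odd. Thus every irreducible unimodular Pisot substitution automatically satisfies the odd-norm hypothesis, and the cohomological lower bound proved in this paper is available without any extra assumption.

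Next I would set up the tower of factors between $\OP$ and the group $\bT$ on which $g$ lands, and assume for contradiction that $m\geq 2$. A generic fiber of $g$ has exactly $m$ points, and the substitution endows this fiber with a self-similar combinatorial structure (via the overlap/coincidence graph); the monodromy of the $m$-to-one cover gives a transitive subgroup $H\leq S_m$ acting on the fiber. The substitution acts on $\check H^1(\OP;\Q)$ with $\lambda$ among its eigenvalues, so $\check H^1(\OP;\Q)$ is a module over $\Z[\lambda]$ and, by irreducibility, decomposes into copies of the field $\Q(\lambda)$; in the pure discrete case ($m=1$) it is a single such copy, of dimension $d=\deg(\lambda)$. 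The strategy is to show that any $m\geq 2$ forces at least one extra copy of $\Q(\lambda)$, contradicting the rigidity that irreducibility together with the unimodular spectral constraints impose on the rank of $\check H^1(\OP)$.

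To produce that extra cohomology I would argue by induction on $m$. The base case $m=2$ is exactly the content of the lower-bound theorem of this paper: the two-point fibers force a class in $\check H^1(\OP)$ not lying in $g^{\ast}\check H^1(\bT)$, and the Coincidence Rank Conjecture for $m=2$ then excludes this in the irreducible unimodular regime, so $m\neq 2$. For the inductive step, given $m\geq 3$ I would split off a nontrivial $H$-orbit in the fiber to obtain an intermediate factor $\OP\to\Omega'\to\bT$, where $\Omega'\to\bT$ is itself an $m'$-to-one cover with $2\leq m'<m$ and $\Omega'$ is (essentially) again a Pisot substitution tiling space of odd norm. Applying the odd-norm lower bound at each stage of the tower, and checking that the classes contributed by distinct sheets are rationally independent, would accumulate enough cohomology to overrun the degree-$d$ bound, forcing $m=1$.

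The principal obstacle is exactly this inductive step for $m\geq 3$. Unlike the case $m=2$, where the monodromy group is forced to be $\Z/2$ and the intermediate cover is canonical, for $m\geq 3$ the group $H$ may be any transitive subgroup of $S_m$, and there is no a priori guarantee that splitting off an orbit yields a genuine substitution tiling space to which the odd-norm theorem applies, nor that the resulting classes are rationally independent rather than torsion or reabsorbed by the $M^{\top}$-action on $\check H^1$. Making the lower bound strict and additive across the whole tower---equivalently, showing that each nontrivial sheet of the cover contributes a fresh copy of $\Q(\lambda)$---requires a norm-sensitive refinement of the coincidence/overlap analysis that goes well beyond the two-sheeted computation, and this is the step I expect to resist a routine extension.
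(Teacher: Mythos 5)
The statement you are trying to prove is Conjecture~\ref{PC}, and the first thing to say is that the paper does not prove it: it is stated as an open conjecture, and the paper's actual results only yield the Coincidence Rank Conjecture (Conjecture~\ref{CRC}) for $cr(\Phi)=2$, together with the closing corollary that any counterexample to the PSC must have an asymptotic cycle \emph{or} coincidence rank greater than two. So a correct submission here would be a discussion of partial progress, not a proof. Your proposal, by your own admission, leaves the inductive step for $m\ge 3$ open, so it is not a proof either; but more importantly, your base case $m=2$ is also wrong, and it is wrong in exactly the way the paper's corollary is careful about. You argue: irreducible unimodular forces $\dim H^1(\OP)=d$ (one copy of $\Q(\lambda)$), Theorem~\ref{big H^1} forces $\dim H^1(\OP)\ge 2d-1$ when $cr(\Phi)=2$ and the norm is odd, contradiction. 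But irreducibility of the characteristic polynomial of the substitution matrix does \emph{not} bound $\dim H^1(\OP)$ by $d$. In the exact sequence~\ref{exact seq.}, $H^1(\OP)\simeq G_2$ contains $G_1\simeq\dir M_\psi^t$ (dimension $\ge d$) \emph{plus} the contribution of $G_3$, which is nontrivial exactly when $\Omega_\Psi$ has an asymptotic cycle; rewriting to a proper substitution enlarges the alphabet and the complex, and nothing in the hypotheses of the PSC kills $G_3$. Hence $\dim H^1(\OP)\ge 2d-1$ produces no contradiction with irreducibility alone, and $cr(\Phi)=2$ is \emph{not} excluded for irreducible unimodular Pisot substitutions --- only for those with $\dim H^1(\OP)=d$ (the HPC hypothesis) or, equivalently in the irreducible case, those without asymptotic cycles. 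This is precisely why the paper states ``asymptotic cycle or coincidence rank greater than two'' rather than claiming the PSC for $cr=2$.

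A secondary but real problem is your covering-space setup. The factor map $g:\OP\to\T_{max}$ is only finite-to-one and a.e.\ $cr(\Phi)$-to-one ((1) of Theorem~\ref{tools}); it is not a covering map, fibers are not locally constant in cardinality, and there is no globally defined monodromy group $H\le S_m$. The genuine topological $2$-to-$1$ cover $\Sigma_p$ is constructed only after first passing to $\Omega_{\Phi_s}=\OP/\approx_s$ (Theorem~\ref{minimal model}, requiring no almost automorphic sub actions) and then using Lemma~\ref{distal twin}, whose uniqueness of the $rpd$ partner depends on (2) of Theorem~\ref{tools} ruling out a third mutually disjoint regionally proximal tiling --- an argument special to $cr=2$, as the paper itself remarks (``the number two seems to be quite special in this argument''). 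For $m\ge 3$ there is no canonical intermediate factor, no analogue of the fixed-point-free involution $\iota$ feeding Lemma~\ref{2d}, and no mechanism making the sheets' cohomology classes independent; so both your base case and your inductive machinery fail, and the conjecture remains open.
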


The dimension of the first ({\Cech}, with rational coefficients) cohomology of a 1-dimensional substitution tiling space is at least as large as the degree of the associated dilation. Replacing irreducibility in the
PSC by minimality of cohomology results in the {\bf Homological Pisot Conjecture}:

\begin{conjecture}\label{HPC} (HPC) If $\Phi$ is a 1-dimensional substitution with dilation a Pisot unit of degree $d$ and the first {\Cech} cohomology $H^1(\OP)$ has dimension $d$,
then the $\R$-action on $\OP$ has pure discrete spectrum.
\end{conjecture}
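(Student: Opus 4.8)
The plan is to prove the contrapositive in spectral form. By the characterizations quoted above (\cite{D}, \cite{LMS}, \cite{BK}), the $\R$-action on $\OP$ has pure discrete spectrum if and only if the maximal equicontinuous factor map $g\colon\OP\to G$ is almost everywhere one-to-one, equivalently the coincidence rank $m$ equals $1$. Thus Conjecture~\ref{HPC} is equivalent to the implication: if the dilation $\lambda$ is a Pisot unit of degree $d$ and $\dim_{\Q}H^1(\OP;\Q)=d$, then $m=1$. I would establish the contrapositive, $m\ge 2\Rightarrow\dim_{\Q}H^1(\OP;\Q)>d$. The eigenvalues of the $\R$-action generate $G$, for which $H^1(G;\Q)\cong\Q^d$, and $g^\ast\colon H^1(G;\Q)\to H^1(\OP;\Q)$ is injective with $d$-dimensional image; so $\dim_{\Q}H^1(\OP;\Q)=d$ holds precisely when $g^\ast$ is onto. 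It therefore suffices, assuming $m\ge2$, to exhibit a single rational class on $\OP$ outside $\mathrm{im}\,g^\ast$.

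The substitution $\Phi$ renormalizes $\OP$ and induces on $H^1(\OP;\Q)$ an invertible action of (the transpose of) the substitution matrix $M$; by the Anderson--Putnam description this cohomology is the direct limit $\varinjlim(\Q^{d},M^{T})$, so its dimension is the number of nonzero eigenvalues of $M$, counted with multiplicity. Since $\lambda$ has degree $d$, its minimal polynomial divides the characteristic polynomial of $M$ and contributes the $d$-dimensional subspace $\mathrm{im}\,g^\ast$, the generalized eigenspace of the Pisot roots $\lambda=\lambda_1,\dots,\lambda_d$. Hence $\dim_{\Q}H^1(\OP;\Q)=d$ holds exactly when $M$ has no further nonzero eigenvalue, and the contrapositive becomes: when $m\ge2$, $M$ acting on $H^1(\OP;\Q)$ must carry a nonzero eigenvalue that is \emph{not} a Galois conjugate of $\lambda$. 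To locate one I would not use $g$ directly---it is only a measurable, branched $m$-to-one map admitting no naive transfer---but interpolate a tower of genuine factor maps
\[
\OP=X_0\longrightarrow X_1\longrightarrow\cdots\longrightarrow X_N=G,
\]
obtained by resolving the proximal (asymptotic) identifications of $\OP$ one coincidence-class at a time. Each $X_i$ is $\Phi$-invariant, so $M$ acts compatibly across $H^1(G;\Q)\to\cdots\to H^1(\OP;\Q)$, and it suffices to produce a non-Pisot eigenvalue already at the first nontrivial level above $G$ and check that it survives injection into $H^1(\OP;\Q)$.

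The mechanism for producing that eigenvalue is the monodromy of the fiber. Over a generic point the return/renormalization dynamics permutes the $m$ coincidence-classes, giving a permutation representation $\rho$ into $S_m$; splitting $\Q^{m}=\Q\cdot(1,\dots,1)\oplus V$ with $V$ the standard $(m-1)$-dimensional complement, the diagonal summand accounts for $\mathrm{im}\,g^\ast$ and $V$ is where any extra cohomology must live. The arithmetic lever is that $\lambda$ is a \emph{unit}: $N(\lambda)=\pm1$ is coprime to $m$ for every $m$, which constrains the local degrees attached to the fiber identifications to be prime to $m$ and so forces $\rho$ to act nontrivially, preventing the $V$-summand from collapsing as it is carried down the tower. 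Tracking the $M$-action on the cohomology of $X_{N-1}$ built from $V$, one expects the eigenvalues that appear to be roots of a factor of the characteristic polynomial of $M$ coprime to the minimal polynomial of $\lambda$---exactly because of this coprimality---yielding a nonzero non-Pisot eigenvalue in $H^1(\OP;\Q)$ and hence $\dim_{\Q}H^1(\OP;\Q)>d$.

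The main obstacle is this last step for $m\ge3$. When $m=2$ the space $V$ is one-dimensional, $\rho$ lands in $\{\pm1\}$, and unit (hence odd) norm forces the nontrivial element to act nontrivially; the resulting extra generator is then an $M$-eigenvector whose eigenvalue is visibly separated from the Pisot set, and the argument closes---this is the content of the rank-two, odd-norm theorem of this paper. For $m\ge3$, however, $\rho$ may be any transitive subgroup of $S_m$, the $M$-invariant decomposition of the cohomology built from $V$ is intricate, and coprimality of $N(\lambda)=\pm1$ with $m$ guarantees that $\rho$ is nontrivial but does not by itself pin the surviving $M$-eigenvalue away from the conjugates of $\lambda$: one must rule out that the entire $V$-contribution reassembles, under renormalization, into the Pisot eigenspace. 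Controlling this interaction between the fiber permutation and the substitution's spectrum is precisely where an idea beyond the parity/coprimality device is required, and is the crux on which a proof of the full conjecture turns.
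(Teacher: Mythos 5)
You are attempting to prove Conjecture~\ref{HPC}, but this is an open conjecture: the paper contains no proof of it, and your proposal does not close it either---you yourself concede that the case of coincidence rank $m\ge 3$ is unresolved, which is exactly where the paper stops. What the paper actually establishes is the partial result that any counterexample to the HPC must have coincidence rank greater than two: Theorem~\ref{big H^1} shows that if $cr(\Phi)=2$ and the norm of $\lambda$ is odd (automatic for a unit), then $\dim H^1(\OP)\ge 2d-1>d$ for $d\ge2$, with the degree-one case supplied by \cite{BBJS}. Your sketch does reconstruct the skeleton of that partial result correctly---an intermediate factor between $\OP$ and $\T_{max}$, a genuine 2-to-1 cover when $m=2$, and a parity argument from odd norm forcing extra eventual rank (Lemma~\ref{2d})---so as far as it goes it parallels the paper. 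But as a proof of the stated conjecture it has a genuine, and at present unfillable, gap.

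Beyond the admitted gap, several intermediate claims are unsupported or false as stated. First, the tower $\OP=X_0\to\cdots\to X_N=\T_{max}$ ``resolving coincidence classes one at a time'' is not known to exist: the paper constructs exactly one intermediate step, $\Omega_s=\OP/\approx_s$ (Theorem~\ref{minimal model}), and the further quotient $\Omega_p$ exists only because $cr=2$ gives each tiling a \emph{unique} rpd partner (Lemma~\ref{distal twin}); for $m\ge3$ no covering structure is available---$g$ is merely a.e.\ $m$-to-one with some larger fibers, not a covering map, so there is no deck group and your monodromy representation $\rho$ into $S_m$ is undefined. Second, your assertion that $\dim H^1(\OP;\Q)$ equals the number of nonzero eigenvalues of $M$ is wrong in general: it requires rewriting to a proper substitution, and even then the relevant complex for the ordered-pair substitution $(\phi_s)_{op}$ is \emph{not} proper, so the paper must invoke the exact-sequence bound of Lemma~\ref{coh bound} with $n((\phi_s)_{op})=2$, which is precisely where the $-1$ in $2d-1$ comes from; your equality would overstate the dimension. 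Third, injectivity of the pullback along the intermediate factor is not formal functoriality---it is Proposition~\ref{injective on H^1}, whose proof depends delicately on the flattening behavior of $f_{\phi_s}$ at the branch point. Finally, note that $2d-1>d$ fails at $d=1$, so even the $m=2$ case of the HPC needs the separate degree-one argument of \cite{BBJS}, a patch your sketch omits.
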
 

As noted above for substitution Delone sets, the $\R^n$-action on $\OP$ has pure discrete spectrum if and only if the maximal equicontinuous factor map $g$ is a.e. one-to-one. It is proved in \cite{BK} that, for Pisot family $\Phi$, there is $cr(\Phi)\in\N$ 
(called the {\em coincidence rank} of $\Phi$) so that $g$ is a.e. $cr(\Phi)$-to-one. The following {\bf Coincidence Rank Conjecture} ( see \cite{BBJS}) extends the HPC to the non-unit case.

\begin{conjecture}\label{CRC} (CRC) If $\Phi$ is a 1-dimensional substitution with Pisot
dilation $\lambda$ and the dimension of $H^1(\OP)$ equals the degree of $\lambda$, then $cr(\Phi)$ divides the norm of $\lambda$.
\end{conjecture}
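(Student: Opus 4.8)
The plan is to prove the conjecture in the one case that appears to be within reach, $cr(\Phi)=2$, and to do it in contrapositive form: assuming $cr(\Phi)=2$ and that $N(\lambda)$ is \emph{odd} (so that $cr(\Phi)$ does \emph{not} divide $N(\lambda)$), I would show that $\dim_{\Q} H^1(\OP)$ strictly exceeds $d:=\deg(\lambda)$. Since the reverse inequality $\dim_{\Q} H^1(\OP)\ge d$ is already known (it is the content of the remark preceding Conjecture \ref{HPC}, coming from pulling back $H^1$ of the maximal equicontinuous factor), producing a single \emph{extra} rational class forces $\dim_{\Q}H^1(\OP)>d$, which is exactly the cohomological lower bound advertised in the abstract and yields the $cr=2$ case of the CRC at once.

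The geometric input is the factor map $g\colon\OP\to\T$ onto the maximal equicontinuous factor, which by \cite{BK} is a.e.\ two-to-one. I would first package the two-point fibre structure as a $\Z/2$ ``deck'' symmetry and use it to interpolate an intermediate factor $\OP\to Y\to\T$, where $Y\to\T$ is a genuine (possibly branched) double cover capturing the coincidence structure; the abstract's phrase ``factors intermediate between a tiling space and its maximal equicontinuous factor'' is precisely this $Y$. The first task is to make $Y$ a bona fide topological object rather than a merely measurable one: because $g$ is only a.e.\ two-to-one, the two sheets collide on the coincidence locus, so $Y$ must be built so that the collisions are controlled and so that $H^1(Y;\Q)\hookrightarrow H^1(\OP;\Q)$ via the same pullback mechanism that gives the known lower bound. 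Granting this, a transfer (Gysin) argument for the double cover splits $H^1(Y;\Q)=H^1(\T;\Q)\oplus V$, where $V$ is the $(-1)$-eigenspace of the deck involution; since $\dim_{\Q}H^1(\T;\Q)=d$, it remains only to show $V\neq 0$.

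Showing $V\neq 0$ is where the parity of $N(\lambda)$ enters and is the crux of the argument. The group $\T$ carries a self-covering induced by $\Phi$ (scaling by $\lambda$) whose topological degree is $|N(\lambda)|$, and the primes dividing $N(\lambda)$ are exactly those contributing $p$-adic solenoidal directions to $\T$. If $2\mid N(\lambda)$ the $\Z/2$ fibre symmetry can be absorbed into the $2$-adic direction of $\T$ and need leave no rational trace, consistent with the CRC permitting minimal cohomology. When $N(\lambda)$ is odd there is no $2$-adic direction to absorb it, and I would argue that the $\Z/2$ cocycle defining $Y\to\T$ is compatible with the $\Phi$-scaling precisely because $|N(\lambda)|$ is a unit modulo $2$; this compatibility makes the class of the double cover survive under the induced map on cohomology and produces a nonzero $\Phi$-eigenclass in $V$. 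The main obstacle is this last step: one must translate the measure-theoretic datum $cr(\Phi)=2$ into an honest topological $\Z/2$-cocycle on $\T$, control the branch (coincidence) locus tightly enough that both the transfer splitting and the pullback injection hold integrally enough to detect the $2$-torsion obstruction, and then pin the vanishing or non-vanishing of that obstruction to the parity of $N(\lambda)$. Once $V\neq 0$ is secured, $\dim_{\Q}H^1(\OP)\ge d+1$, the lower bound follows, and the CRC for coincidence rank two is the stated corollary.
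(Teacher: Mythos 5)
Your overall architecture matches the paper's: argue the contrapositive for $cr(\Phi)=2$, interpose a factor between $\OP$ and $\T_{max}$ carrying a $\Z/2$-symmetry, inject its $H^1$ into $H^1(\OP)$, and let the parity of the norm force extra rational cohomology. But there are two genuine gaps, and they sit exactly at the points you yourself flag as the main obstacles.

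First, the double cover is in the wrong place. With $cr(\Phi)=2$ every fibre of $g$ has at least two points but some have more, so no space $Y$ sitting over $\T_{max}$ as a genuine (even branched) double cover can carry the coincidence structure, and ``controlling the collisions'' over $\T_{max}$ directly is not the right move. The paper instead builds \emph{two} intermediate substitution tiling spaces: $\Omega_s=\OP/\approx_s$ (Theorem \ref{minimal model}) and then $\Omega_p$, the space of unordered $rpd$ pairs (Theorem \ref{rpd p space}). The key structural point is that $\Omega_p\to\T_{max}$ is only a.e.\ one-to-one --- $\Omega_p$ is a pure discrete tiling space, not the solenoid --- while $\Omega_s\to\Omega_p$ is an honest, everywhere 2-to-1, unbranched covering; all the branching you worry about is absorbed into the a.e.\ one-to-one leg, which costs nothing since only lower bounds are needed. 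Relatedly, the injection $H^1(\Omega_s;\Q)\hookrightarrow H^1(\OP;\Q)$ is not ``the same pullback mechanism'' as the injectivity of $g^*$: factor maps of tiling spaces do not induce injections on $H^1$ in general, and the paper needs a separate chain-lifting argument (Proposition \ref{injective on H^1}) exploiting properness and the structure of $\asymp_s$.

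Second, and more seriously, your mechanism for $V\ne0$ fails as stated. A topologically nontrivial $\Z/2$-cover can have vanishing rational anti-invariant $H^1$ (the connected double cover of a circle already does), so nonvanishing of the classifying cocycle, however compatible with the $\lambda$-scaling, cannot by itself produce a rational class. The parity of the norm enters not through $2$-adic directions of $\T_{max}$ but through mod-2 linear algebra on the substitution itself: the deck involution forces the abelianization of the ordered-pair substitution into the block form $\begin{pmatrix} X & Y\\ Y & X\end{pmatrix}$, the anti-invariant block satisfies $X-Y\equiv X+Y\pmod 2$, and $X+Y$ has $\lambda$ as an eigenvalue of odd norm, so its eventual rank mod 2 is at least $d$ (Lemma \ref{2d}). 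This yields eventual rank at least $2d$ overall, and after subtracting $1$ for the two $(\phi_s)_{op}$-periodic bi-infinite words (Lemma \ref{coh bound}) one gets $\dim H^1(\OP)\ge 2d-1$ --- stronger than your $d+1$ when $d\ge2$, but equal to $d$ when $d=1$, which is why the paper must still invoke the degree-one case of the CRC from \cite{BBJS} to finish. Your proposal offers no quantitative handle on $V$ and no substitute for that final arithmetic step.
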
 

For discussions of the various Pisot conjectures, and extensions to higher dimensions, see \cite{BS}, \cite{BBJS}, \cite{BG}, and, particularly, \cite{ABBLSS}.

Conjecture \ref{CRC} is established in \cite{BBJS} for $\lambda$ of degree 1. The main result of this article verifies Conjecture \ref{CRC} for $\lambda$ of any degree in case the coincidence rank is 2. We prove (Theorem \ref{big H^1} in the text):
\\
\\
\noindent {\bf Theorem}: Suppose that  $\Phi$ is a 1-dimensional substitution with Pisot dilation $\lambda$ of degree $d$. If the norm of $\lambda$ is odd and $cr(\Phi)=2$, then dim$(H^1(\OP))\ge2d-1$.
\\
\\

To prove the above, we will show that the maximal equicontinuous factor map
$$\OP\stackrel{g}\arrow\T_{max}$$
for a Pisot family tiling space $\OP$ with coincidence rank 2 (and with no almost automorphic sub actions) factors as
$$\OP\stackrel{\pi_s}\arrow\Omega_s\stackrel{\pi_p}\arrow\Omega_p\stackrel{g'}\arrow\T_{max}$$
with $\Omega_s$ and $\Omega_p$ also substitution tiling spaces, $\pi_s$ and $g'$ measure isomorphisms, and $\pi_p$ a topological 2-to-1 covering projection. The space 
$\Omega_p$ is the {\em maximal pure discrete factor} of $\Omega_s$ - examples have been considered in \cite{BGG} where its utility in analyzing the non-discrete part of the spectrum of $\OP$ is explored. Restricting to dimension 1, the map $\pi_s$ induces an injection in cohomology and the lower bound in the above theorem follows from arithmetic associated with the double cover. The number two seems to be quite special in this argument; it would be extremely interesting to see an extension to higher coincidence rank. 

\section{Preliminaries}

In this article, all substitutions will be primitive (some power of the substitution matrix is strictly positive), non-periodic (no element of the tiling space has a non-zero translational period), and FLC. If $\Phi$ is an $n$-dimensional substitution, the tiling space $\OP$ with the tiling metric is then compact, connected, and locally the product of a Cantor set with an $n$-dimensional disk. The action of $\R^n$ on $\OP$ is minimal and uniquely ergodic, the substitution induced map $\Phi:\OP\to\OP$ is a homeomorphism, and $\Phi(T-v)=\Phi(T)-\Lambda v$ for all $T\in\OP$ and $v\in\R^n$, $\Lambda$ being the linear inflation associated with $\Phi$. 

If $\phi:\mathcal{A}=\{1,\ldots,m\}\to\mathcal{A}^*$ is a symbolic substitution with matrix $M$ (the $ij$-th entry is the number of $i$'s occurring in $\phi(j)$), there is an associated $1$-dimensional tile substitution $\Phi$ on a set of $m$ prototiles: the $i$-th prototile is the interval $[0,\omega_i]$, marked with the symbol $i$ (formally, the $i$-th prototile is the pair $([0,\omega_i],i)$), where $\omega=(\omega_1,\ldots,\omega_m)$ is a positive left eigenvector for $M$. The linear expansion for $\Phi$ is the Perron-Frobenius eigenvalue $\lambda$ of $M$: $\omega M=\lambda\omega$.

We will say that the substitution tiling spaces $\OP$ and $\Omega_{\Psi}$ are {\em isomorphic}, and write $\OP\simeq\Omega_{\Psi}$, if there is a homeomorphism of $\OP$ with $\Omega_{\Psi}$ that conjugates both the $\R^n$-actions and the substitution induced homeomorphisms.
The {\em stable set} of $T\in\OP$ is the set $W^s(T)=\{T'\in\OP:d(\Phi^n(T')\Phi^n(T))\to0\text{ as }n\to\infty\}$. We write $T\sim_s T'$ if $T'\in W^s(T)$ and we say that $T$ and $T'$ are {\em stably equivalent}.

The eigenvalues of the linear expansion $\Lambda$ of a substitution are necessarily algebraic integers. Given $k\in\N$ and $\alpha\in\mathbb{C}$, let $n(k,\alpha)$ denote the number of $k\times k$ Jordan blocks in the Jordan form of $\Lambda$ that have eigenvalue $\alpha$. Following \cite{LS} and \cite{K}, we will say that $\Phi$ is a {\em Pisot family substitution} if the eigenvalues, 
$spec(\Lambda)$, of $\Lambda$ satisfy the condition: if $\lambda\in spec(\Lambda)$ and $\eta$ is an algebraic conjugate of $\lambda$ with $|\eta|\ge1$, then $n(k,\eta)= n(k,\lambda)$ for all $k\in\N$.  A substitution is {\em self-similar} if its expansion is a scalar, $\Lambda=\lambda I$: such a substitution is Pisot family if and only if $\lambda$ is a Pisot number  and we will call such substitutions simply {\em Pisot substitutions}.

Rather generally, group actions on compact spaces have maximal equicontinuous factors. For the $\R^n$-action on a substitution tiling space $\OP$, the maximal equicontinuous factor is 
a Kronecker action on a solenoidal group (an inverse limit of linear hyperbolic endomorphisms of tori): $$g:\OP\to \T_{max}.$$
The ``maximality" property of $g$ is that if $f:\OP\to X$ is any map that factors the $\R^n$-action on $\OP$ onto an equicontinuous $\R^n$-action on $X$, then $f$ factors through $g$: there is a continuous factor map $h:\T_{max}\to X$ with $f=h\circ g$. 
If $\Phi$ is an $n$-dimensional Pisot substitution with expansion $\lambda$ of algebraic degree $d$, $\T_{max}$
has dimension $nd$. If $\lambda$ is a unit, then $\T_{max}$ is just the $nd$-torus.
For general abelian group actions on compact spaces, the equicontinuous structure relation (the quotient by which gives the maximal equicontinuous factor) is the regional proximal relation. For Pisot family substitution tiling spaces, this relation takes the stronger form given by (6) of Theorem \ref{tools} below.
\\

The following theorem \footnote[1]{The definition of Pisot family used in \cite{BK}, where the results on which Theorem \ref{tools} is based are established, is narrower than that used in the present paper. By \cite{K} those results extend to the context here.} collects results that we will require later. We denote by $B_r[T]$ the collection of all tiles of $T$ whose supports meet the closed ball of radius $r$ centered at 0 and we define the {\em coincidence rank} of $\Phi$ to be $$cr(\Phi)=min\{\sharp g^{-1}(z):z\in\T_{max}\}.$$
(In \cite{BK}, this is called the {\em minimal rank} of  $g:\OP\to\T_{max}$. In \cite{BKw},
$cr(\Phi)$ is the maximum cardinality of a collection of tilings in the same fiber of $g$, no two of which share a tile (i.e., coincide). The minimal rank and coincidence rank are shown to be the same for Pisot family substitutions in \cite{BK} - see (2) of Theorem \ref{tools} below.) The ``a.e."  in the theorem refers to Haar measure on $\T_{max}$.
\\
\\
\begin{theorem}\label{tools} Let $\Phi$ be a Pisot family substitution and let $g:\OP\to\T_{max}$ be the map onto the maximal equicontinuous factor. Then:
\begin{enumerate}
\item g is finite-to-one and a.e. $cr(\Phi)$-to-one.
\item For each $z\in\T_{max}$ there are $T_1,\ldots,T_{cr(\Phi)}\in g^{-1}(z)$ so that 
$\Phi^k(T_i)\cap \Phi^k(T_j)=\emptyset$ for $i\ne j$, and all $k\in\Z$, and if $T\in g^{-1}(z)$ then $T\cap T_i\ne\emptyset$ for some $i$.
\item $T\sim_s T'$ if and only if there is $k\in\N$ with $B_0[\Phi^k(T)]=B_0[\Phi^k(T')]$.
\item Up to translation, there are only finitely many pairs of the form $(B_0[T]),B_0[T'])$ with $g(T)=g(T')$.

\item The $\R^n$-action on $\OP$ has pure discrete spectrum $\Longleftrightarrow$ $g$ is a.e one-to-one $\Longleftrightarrow$ $cr(\Phi)=1$.
\item $g(T)=g(T')$ if and only if for every $r>0$ there are $S_r,S'_r\in\OP$ and $v_r\in\R^n$ so that $B_r[T]=B_r[S_r]$, $B_r[T']=B_r[S'_r]$, and $B_r[S_r-v_r]=B_r[S'_r-v_r]$.
\item If $T-v\sim_s T'-v$ for all $v\in\R^n$, then $T=T'$.
\end{enumerate}
\end{theorem}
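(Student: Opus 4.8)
The plan is to regard Theorem~\ref{tools} as an assembly of the structure theory for the maximal equicontinuous factor of a Pisot family substitution, the bulk of which is established in \cite{BK} and \cite{BKw}; the genuine work is to organize these facts and, as the footnote indicates, to confirm via \cite{K} that they survive the passage from the narrower Pisot family hypothesis used there to the broader one adopted here. The unifying object is the regional proximal relation, whose quotient is, for any minimal group action, precisely the maximal equicontinuous factor. Thus $g(T)=g(T')$ exactly when $(T,T')$ is regionally proximal, and I would begin by unwinding this definition in the tiling metric to obtain item~(6): two tilings are regionally proximal precisely when, for every radius $r$, there are tilings agreeing with $T$ and with $T'$ on $B_r[\cdot]$ that coincide after a common translation $v_r$. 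Since the restricted Pisot family condition enters \cite{BK} only to force this combinatorial (as opposed to merely topological) description of the relation, isolating the dependence on \cite{K} at this point renders the remaining deductions essentially formal.

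Granting (6), items (1) and (2) come from analyzing a single fiber $g^{-1}(z)$. Finiteness follows because FLC permits only finitely many central patches $B_0[T]$ up to translation, while the Pisot family condition prevents a fiber from accumulating, so that $\sharp g^{-1}(z)<\infty$. The function $z\mapsto\sharp g^{-1}(z)$ is invariant under the uniquely ergodic translation action on $\T_{max}$ and hence a.e. equal to a constant; the fact that this constant is the minimum $cr(\Phi)$---rather than some larger generic value, as can happen for general finite-to-one factor maps over a minimal base---is one of the substantive facts I would import from \cite{BK}, and it gives (1). For (2) I would decompose $g^{-1}(z)$ by the relation ``shares a tile'': the Pisot family condition forces two coinciding tilings to remain coincident and two non-coinciding tilings to satisfy $\Phi^k(T_i)\cap\Phi^k(T_j)=\emptyset$ for all $k\in\Z$, so selecting one representative per class produces $T_1,\dots,T_{cr(\Phi)}$ with the stated properties.

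The local items (3), (4), and (5) are comparatively direct. For (3) I would use that, in the tiling metric, $d(\Phi^n(T),\Phi^n(T'))\to0$ is equivalent to central agreement of the inflated tilings, i.e. to $B_0[\Phi^k(T)]=B_0[\Phi^k(T')]$ for some $k$, since inflation converts a small relative translation into coincidence on an expanding ball about the origin. Item~(4) is then immediate from FLC together with the constraint that (6) places on which central patches can be fiber-related. Item~(5) combines (1) with the dictionary recalled in the introduction, that pure discrete spectrum is equivalent to $g$ being a.e. one-to-one, which together with the definition of $cr(\Phi)$ yields the triple equivalence.

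I expect the real obstacle to be the rigidity statement (7). Using (3), the hypothesis $T-v\sim_s T'-v$ for every $v$ says that for each $v$ the inflated tilings $\Phi^k(T)$ and $\Phi^k(T')$ coincide at the point $\Lambda^k v$ for some $k=k(v)$; the difficulty is that $k$ depends on $v$, so one cannot directly conclude coincidence everywhere at a single level. The danger to exclude is that $T$ and $T'$ disagree on a set that inflation repeatedly ``heals,'' which is exactly the behaviour that stable equivalence permits. I would control this by tracking how the disagreement set transforms under $\Phi$ and under $\Lambda^{-1}$, and arguing, via minimality and repetitivity, that a persistent disagreement would produce a translate $v$ at which no amount of inflation restores central coincidence, contradicting the hypothesis; making this propagation argument rigorous is the crux.
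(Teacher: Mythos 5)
Your overall framing is reasonable --- the paper itself treats this theorem as an assembly of results from \cite{BK} and \cite{BO} (with \cite{K} supplying the bridge to the broader Pisot family definition), and items (1), (4), (5), (6) are handled there exactly as you describe, by direct citation. But your proposal leaves genuine gaps at the three places where the paper leans on specific lemmas rather than on formal bookkeeping. First, item (7): you explicitly stop at ``making this propagation argument rigorous is the crux,'' which is an admission that the proof is not done. The paper disposes of this by citing Lemma 3.10 of \cite{BO}; a from-scratch argument really does require the Meyer-set rigidity of the translation vectors between coincident patches, and your sketch of ``tracking the disagreement set'' does not yet engage with that. Second, the hard direction of item (3) (that $d(\Phi^n(T),\Phi^n(T'))\to 0$ forces exact coincidence of central patches at some finite stage, rather than merely coincidence up to ever-smaller nonzero translations) is precisely the content of Lemma 3.6 of \cite{BO}; your sentence ``inflation converts a small relative translation into coincidence'' asserts the conclusion rather than proving it. Ruling out the small-translation scenario again uses uniform discreteness of the relevant set of return/displacement vectors, and without that input the claim is false for general (non-Meyer) tiling systems.

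Third, your argument for item (2) is organized differently from the paper's and has a gap of its own. You propose to decompose $g^{-1}(z)$ by the relation ``shares a tile'' and assert that the Pisot family condition forces non-coinciding tilings to satisfy $\Phi^k(T_i)\cap\Phi^k(T_j)=\emptyset$ for \emph{all} $k\in\Z$; but ``shares a tile'' is not an equivalence relation on the fiber, and the persistence of non-coincidence under all powers of $\Phi$ is exactly what needs proof (the paper later has to work to establish a statement of this kind in Lemma \ref{distal twin}, using item (4)). The paper's actual argument for (2) instead starts from Theorem 5.4 and Lemma 2.14 of \cite{BK}, which produce $cr(\Phi)$ tilings in the fiber at uniform distance $\ge\delta$ under every translation, and then derives the $\Phi^k$-disjointness by a counting contradiction: if every such $\delta$-separated family eventually developed a coincidence under some power of $\Phi$, the image fiber $\Phi^k(g^{-1}(z))$ could not itself contain a $\delta$-separated family of the required cardinality. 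You would need either to reproduce that quantitative separation input or to find a substitute for it; the purely qualitative decomposition you describe does not suffice.
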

\begin{proof}
(1) $g$ is finite-to-one by Theorem 5.3 of \cite{BK} and a.e. $cr(\Phi)$-to-one by the proof of Theorem 2.25 of \cite{BK}.

(2) From Theorem 5.4 and Lemma 2.14 of \cite{BK}, there is $\delta>0$ so that, for each $z\in\T_{max}$, there are $T_1,\ldots,T_{cr(\Phi)}\in g^{-1}(z)$ with $\inf_{v\in\R^n}d(T_i-v,T_j-v)\ge\delta$ for $i\ne j$. Moreover, if $T\in g^{-1}(z)$, then $\inf_{v\in\R^n}d(T-v,T_i-v)=0$ for some $i\in\{1,\ldots,cr(\phi)\}$. If, for every such $\{T_1,\ldots,T_{cr(\Phi)}\}$, there are $i\ne j$ so that $\Phi^k(T_i)\cap\Phi^k(T_j)\ne\emptyset$ for some $k$, then, for sufficiently large $k$ there would not be $T'_1,\ldots,T'_{cr(\Phi)}\in g^{-1}(z')=\Phi^k(g^{-1}(z))$ with $\inf_{v\in\R^n}d(T'_i-v,T'_j-v)\ge\delta$, for $i\ne j$, as required.
Furthermore, if $T\in g^{-1}(z)$ were disjoint from all the $T_i$, then $\inf_{v\in\R^n}d(T-v,T_i-v)>0$, by (4) of this theorem, and then, by minimality of the $\R^n$-action, $g$ would be at least $(cr(\Phi)+1)$-to-one everywhere, in contradiction to (1).

(3) See Lemma 3.6 of \cite{BO}

(4) See Corollary 5.8 of \cite{BK}.

(5) See \cite{BK}.

(6) The condition that for every $r>0$ there are $S_r,S'_r\in\OP$ and $v_r\in\R^n$ so that $B_r[T]=B_r[S_r]$, $B_r[T']=B_r[S'_r]$, and $B_r[S_r-v_r]=B_r[S'_r-v_r]$ is called {\em strong regional proximality} in \cite{BK}: that this is equivalent to $g(T)=g(T')$ follows from Theorems 5.6 and 3.4 of that paper.

(7) See Lemma 3.10 of \cite{BO}.
\end{proof}

\section{Pisot Factors}

 We will write $T\approx_s T'$, provided $\{v:T-v\sim_sT'-v\}$ is dense in $\R^n$ and $g(T)=g(T')$. It follows from (3) of Theorem \ref{tools} that $\{v:T-v\sim_s T'-v\}$ is open and, from this, that $\approx_s$ is an equivalence relation. Since  whether or not $T\sim_s T'$ is entirely dependent on 
$(B_0[T],B_0[T'])$, it follows from (4) of Theorem \ref{tools} that the relation $\approx_s$ is also closed. The cardinalities of the $\approx_s$-equivalence classes are uniformly bounded by $cr(\Phi)$. It is clear that $T\approx_s T'\Longleftrightarrow \Phi(T)\approx_s\Phi(T')$ and $T-v\approx_s T'-v$ for all $v\in\R^n$. Thus the $\Z$- and $\R^n$-actions on $\OP$ induce $\Z$- and $\R^n$-actions on the quotient $\OP/\approx_s$. 

\begin{Theorem}\label{pi=g} $\OP/\approx_s$ is the maximal equicontinuous factor of the $\R^n$-action on $\OP$ iff the $\R^n$-action on $\OP$ has pure discrete spectrum.
\end{Theorem}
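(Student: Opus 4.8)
My plan is to reformulate the statement so that it becomes a clean comparison of two equivalence relations. Since $T\approx_s T'$ requires $g(T)=g(T')$, the relation $\approx_s$ refines the fibre relation $\ker g:=\{(T,T'):g(T)=g(T')\}$, whose quotient is $\T_{max}$. Hence the identity on $\OP$ descends to a continuous equivariant surjection $\bar g:\OP/\!\approx_s\ \to\T_{max}$. I claim $\OP/\!\approx_s$ is the maximal equicontinuous factor \emph{iff} $\bar g$ is injective, \emph{iff} $\approx_s=\ker g$: if $\OP/\!\approx_s$ is equicontinuous then by maximality of $\T_{max}$ the quotient map $q:\OP\to\OP/\!\approx_s$ factors through $g$, giving $\ker g\subseteq\ker q=\approx_s$ and so equality; conversely $\approx_s=\ker g$ gives $\OP/\!\approx_s=\OP/\ker g=\T_{max}$. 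Using Theorem \ref{tools}(5), the theorem therefore reduces to proving $\approx_s=\ker g\iff cr(\Phi)=1$, which I would do by the two directions below.

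For the direction $cr(\Phi)\ge 2\Rightarrow\approx_s\subsetneq\ker g$ I would argue directly. Fix any $z\in\T_{max}$ and take $T_1,T_2\in g^{-1}(z)$ from Theorem \ref{tools}(2), so that $\Phi^k(T_1)\cap\Phi^k(T_2)=\emptyset$ for all $k\in\Z$; in particular $T_1\neq T_2$. The key observation is that sharing a tile is invariant under a common translation, and $\Phi^k(T_i-v)=\Phi^k(T_i)-\Lambda^k v$, so $\Phi^k(T_1-v)$ and $\Phi^k(T_2-v)$ share no tile for \emph{every} $v\in\R^n$ and $k\in\Z$. Consequently $B_0[\Phi^k(T_1-v)]\neq B_0[\Phi^k(T_2-v)]$ for all $k$, and Theorem \ref{tools}(3) gives $T_1-v\not\sim_s T_2-v$ for all $v$. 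Thus $\{v:T_1-v\sim_s T_2-v\}=\emptyset$ is not dense, so $T_1\not\approx_s T_2$ although $g(T_1)=g(T_2)$; hence $\approx_s\neq\ker g$.

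For the converse, $cr(\Phi)=1\Rightarrow\ker g\subseteq\approx_s$, I must show that $g(T)=g(T')$ forces $\{v:T-v\sim_s T'-v\}$ to be dense. By Theorem \ref{tools}(3), $v$ lies in this set whenever $\Phi^k(T)$ and $\Phi^k(T')$ coincide (share the tile) at the point $\Lambda^k v$ for some $k\ge 0$; writing $\mathrm{Coin}(S,S')$ for the set of coincidence points of $S,S'$, it therefore suffices to prove that $\bigcup_{k\ge 0}\Lambda^{-k}\,\mathrm{Coin}(\Phi^k T,\Phi^k T')$ is dense in $\R^n$. Because $cr(\Phi)=1$, Theorem \ref{tools}(1) says a.e.\ fibre of $g$ is a singleton, so the relatively independent self-joining of the unique invariant measure over $g$ is carried by the diagonal; this makes generic fibre-pairs coincide with density one. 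The plan is to transfer this to the specific pair $(T,T')$ by exploiting the substitution: coincidences only grow under $\Phi$ (a coincidence on a tile forces coincidence on its whole supertile), and pure discreteness forces the coincidence density of $\Phi^k T$ and $\Phi^k T'$ to tend to one, so for large $k$ the set $\mathrm{Coin}(\Phi^k T,\Phi^k T')$ is relatively dense and, after applying $\Lambda^{-k}$ (with $\Lambda$ expanding), fills $\R^n$ densely.

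The main obstacle is precisely this last density statement. The naive measure argument is \emph{not} available: when $\deg\lambda=d>1$ one has $\dim\T_{max}=nd>n$, so the orbit $\{g(T)+\rho(v):v\in\R^n\}$ is a measure-zero subset of $\T_{max}$, and the fact that singleton fibres form a conull set gives no information about a dense set of parameters $v$. One cannot conclude that $T-v$ lies over a singleton fibre for a dense set of $v$, and indeed $cr(\Phi)=1$ is perfectly compatible with (measure-zero) fibres of size $\ge 2$, which are exactly the pairs one must handle. The substance of the proof is thus to bypass the measure count and use the self-similar (overlap-to-coincidence) mechanism characterizing pure discrete spectrum to force full coincidence density along the specific orbit. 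By contrast, the reduction in the first paragraph and the $cr(\Phi)\ge 2$ direction are formal and should present no difficulty.
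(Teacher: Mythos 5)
Your reduction of the theorem to the equality of relations $\approx_s\,=\,\ker g$ and your treatment of the direction $cr(\Phi)\ge 2\Rightarrow\ \approx_s\ \subsetneq\ \ker g$ are both correct and essentially match the paper (the paper uses a $\Phi$-periodic pair where you invoke (2) of Theorem \ref{tools} directly, but the mechanism --- a pair $T_1,T_2$ in one fiber with $\Phi^k(T_1)\cap\Phi^k(T_2)=\emptyset$ for all $k$, hence $T_1-v\nsim_s T_2-v$ for every $v$ --- is the same). The genuine gap is in the direction $cr(\Phi)=1\Rightarrow\ker g\subseteq\ \approx_s$. There you reduce to the density of $\bigcup_{k\ge0}\Lambda^{-k}\,\mathrm{Coin}(\Phi^kT,\Phi^kT')$ and then offer only a plan (``coincidence density tends to one''), which you yourself identify as the main obstacle and do not carry out. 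Nothing in your write-up shows that a regionally proximal pair has even one coincidence, let alone a relatively dense set of them, and the joining heuristic you rightly discard is not replaced by an actual argument. As submitted, the hard half of the theorem is unproved.

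The paper closes exactly this gap by contraposition with a short compactness argument, which needs no density estimate at all: if $g(T)=g(T')$ but $\{v:T-v\sim_s T'-v\}$ is not dense, there are $v_0$ and $\epsilon>0$ with $T-v_0-v\nsim_s T'-v_0-v$ for all $v\in B_\epsilon(0)$. By (3) of Theorem \ref{tools}, for every $k$ the tilings $\Phi^k(T-v_0)$ and $\Phi^k(T'-v_0)$ then share no tile at any point of $\Lambda^kB_\epsilon(0)$, a ball whose inradius tends to infinity. Choosing $n_i\to\infty$ with $\Phi^{n_i}(T-v_0)\to S$ and $\Phi^{n_i}(T'-v_0)\to S'$, one gets $g(S)=g(S')$ and $S\cap S'=\emptyset$, which forces $cr(\Phi)\ge2$ and hence, by (5) of Theorem \ref{tools}, non-pure-discrete spectrum. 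This is the ``self-similar mechanism'' you were reaching for, but run in the contrapositive: failure of stable equivalence on an open set of translates blows up under $\Phi$ to total disjointness in the limit. You should replace the third and fourth paragraphs of your proposal with this argument.
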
 
\begin{proof} 
Suppose that the $\R^n$-action on $\OP$ has pure discrete spectrum and suppose that $g(T)=g(T')$. If there are $v_0$ and $\epsilon>0$ so that $T-v_0-v\nsim_sT'-v_0-v$ for each $v\in B_{\epsilon}(v_0)$, choose $n_i\to\infty$ such that $\Phi^{n_i}(T-v_0)\to S\in\OP$ and $\Phi^{n_i}(T'-v_0)\to S'\in\OP$. Then $g(S)=g(S')$ and $S\cap S'=\emptyset$. But this means that the coincidence rank of $\Phi$ is at least 2, so the spectrum of the $\R^n$-action is not pure discrete ((5) of Theorem \ref{tools}). Thus there are no such
$v_0,\epsilon$ and $T-v\sim_s T'-v$ for a dense set of $v$. Thus $g(T)=g(T')\implies
T\approx_s T'$. That is, $\approx_s$ is the equicontinuous structure relation and $\OP/\approx_s$ is the maximal equicontinuous factor.

Conversely, suppose that the spectrum of the $\R^n$-action on $\OP$ does not have pure discrete spectrum. Then $cr(\Phi)\ge2$ and, in particular, there are tilings $T,T'\in\OP$ that are periodic under $\Phi$ with $g(T)=g(T')$ and $T\cap T'=\emptyset$. But then $\Phi^k(T)\cap\Phi^k(T')=\emptyset$ for all $k\in\N$ so that $T-v\nsim_s T'-v$ for {\em any} $v$. Thus it is not the case that $g(T)=g(T')\implies T\approx_sT'$, so $\approx_s$ is not the equicontinuous structure relation and $\OP/\approx_s$ is not the maximal equicontinuous factor.

\end{proof}

We will see that if $\Phi$ is a 1-dimensional Pisot substitution, and the $\R$-action on $\OP$ does not have pure discrete spectrum, then $\OP/\approx_s$ is also a 1-dimensional Pisot substitution tiling space. The most straightforward generalization of this is not true in higher dimensions as the following simple example shows.

\begin{example} \label{products} Let $\psi$ be the Thue-Morse substitution $$\psi:a\mapsto ab,\, b\mapsto ba$$
and let $\phi$ be the Fibonacci substitution $$\phi: a\mapsto ab,\,b\mapsto a.$$
The corresponding tile substitutions, $\Psi$ and $\Phi$, are Pisot and the product
$\Psi\times\Phi$ defines a 2-dimensional Pisot family substitution on rectangular tiles.
The relation $\approx_s$ is trivial on $\Omega_{\Psi}$ while $\OP/\approx_s$ is the 2-torus. It is not hard to see that $\Omega_{\Psi\times\Phi}/\approx_s$ is homeomorphic with $\Omega_{\Psi}\times\T^2$, which is not a 2-dimensional substitution tiling space.

\end{example}

In the above example, even though the $\R^2$-action on the tillng space does not have pure discrete spectrum, there are minimal sets under 1-dimensional sub-actions that do have pure discrete spectrum. A difficulty in trying to capture this sort of phenomenon in a general setting is that arbitrary sub actions might not carry unique invariant measures, making it difficult to meaningfully speak of their dynamical spectra. Rather, we can measure how close a sub action is to being equicontinuous.

The {\em minimal rank} of a minimal $\R^k$-action on a space $X$ is the the minimum cardinality of a fiber $g_X^{-1}(g(x))$ of the maximal equicontinuous factor map $g_X:X\to X_{max}$ for the action. For the full $\R^n$-action on an $n$-dimensional Pisot family substitution tiling space the minimal rank is the same as the coincidence rank and hence equals 1 if and only if the spectrum is pure discrete (Theorem \ref{tools}). Let us say that the $n$-dimensional tiling space 
$\Omega$ 
 
{\em has an almost automorphic sub action}
if there is a nontrivial subspace $V\subset\R^n$ and a minimal set $X\subset\Omega$ under the $V$-action $\Omega\times V\ni(T,v)\mapsto T-v$ on which this $V$-action has 
 minimal rank 1.
In Example \ref{products}, the $V$-action determined by $V=\{(0,t):t\in\R\}$ on $X:=\{T\}\times\Omega_{\Phi}$ has minimal rank 1 for every $T\in\Omega_{\Psi}$. We will prove
(Theorem \ref{minimal model}), for general Pisot family $\Phi$, that $\OP/\approx_s$ is a substitution tiling space if and only if $\OP$ has no almost automorphic sub actions.

Recall that a substitution $\Phi$ {\em forces its border} provided there is $m\in\N$ so that
if $0\in\tau\in T\cap T'$ for any $T,T'\in\OP$, then $B_0[\Phi^m(T)]=B_0[\Phi^m(T')]$. By, for example, introducing collared tiles, we may always replace a substitution by one that forces its border and has an isomorphic tiling space (see \cite{AP}).  
The {\em Anderson-Putnam complex}, $X_{\Phi}$, for $\Phi$ is an $n$-dimensional CW-complex whose $n$-cells are the prototiles with prototiles glued along an $n-1$ face
if they have representative tiles occurring in a tiling $T\in\OP$ and intersecting along that face. More precisely, $X_{\Phi}=\OP/\sim$ with $\sim$ the smallest equivalence relation
for which $0\in\tau\in T\cap T'\implies T\sim T'$. Let $\pi:\OP\to X_{\Phi}$ be the quotient map. There is then a map $f_{\Phi}:X_{\Phi}\to X_{\Phi}$ so that $f_{\Phi}\circ\pi=\pi\circ\Phi$ and hence a map $\hat{\pi}:\OP\to\inv f_{\Phi}$ that semi-conjugates $\Phi$ with the shift homeomorphism $\hat{f}_{\Phi}:\inv f_{\Phi}\to\inv f_{\Phi}$. If $\Phi$ forces its border, $\hat{\pi}$ is a homeomorphism (\cite{AP}).

We wish to transfer the relation $\approx_s$ to $X_{\Phi}$. Let  $R$ be the relation on $X_{\Phi}$ defined by $$xRx'\Longleftrightarrow \exists\, T,T'\in\OP\text{ with }T\approx_s T',\, \pi(T)=x, \text{ and }\pi(T')=x'.$$
 The relation $R$ is clearly reflexive and symmetric and is also topologically closed (see the proof of Lemma \ref{equiv rel}), but it may not be transitive. And the smallest equivalence
relation containing $R$ may not be closed (for the Fibonacci substitution, all equivalence classes are dense). 

Given patches (collections of tiles) $P$ and $P'$ and $x\in\R^n$, let us write $P-x\sim_sP'-x$ provided $x$ is in the interior of the intersection of the supports of $P$ and $P'$ and $T-x\sim_s T'-x$ for all (equivalently, some) tilings $T,T'$ with $P\subset T$ and $P'\subset T'$. If $P-x\sim_s P'-s$ for a set of $x$ dense in the interior of the intersection of the supports of $P$ and $P'$, we'll say that $P$ and $P'$ are {\em densely stably equivalent on overlap}. 

\begin{theorem} \label{PDS} Suppose that $\Phi$ is an $n$-dimensional substitution of Pisot family type and that there are $0\ne v_k\in\R^n$ with $v_k\to0$ and a tile $\tau$ so that 
$\tau$ and $\tau-v_k$ are densely stably equivalent on overlap for each $k$. Then $\OP$ has an almost automorphic sub action.
\end{theorem}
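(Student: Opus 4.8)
The plan is to convert the infinitesimal stable equivalences supplied by the hypothesis into a single macroscopic one along a fixed direction, and then to read off from it a one-dimensional sub action that is an almost one-to-one extension of its Kronecker factor. First I would localize the hypothesis: after translating we may assume $0$ lies in the interior of the support of $\tau$, and we fix one tiling $T\in\OP$ with $\tau\in T$. Applying the definition of dense stable equivalence on overlap with the admissible choice $T':=T-v_k$ (which contains $\tau-v_k$), we obtain for each $k$ a set $D_k$, dense in the interior of $\mathrm{supp}(\tau)\cap\mathrm{supp}(\tau-v_k)$, with $T-x\sim_s(T-x)-v_k$ for every $x\in D_k$. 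Writing $S=T-x$, this says that a dense family of tilings $S$ in the local orbit satisfies $S\sim_s S-v_k$.

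Next I would magnify to a fixed scale. Since $\sim_s$ is preserved by $\Phi$ and $\Phi(R-w)=\Phi(R)-\Lambda w$, applying $\Phi^{n}$ gives $\Phi^{n}(S)\sim_s\Phi^{n}(S)-\Lambda^{n}v_k$. Because $\Lambda$ is expanding, for each $k$ I can choose $n_k\to\infty$ so that $\Lambda^{n_k}v_k$ lies in a fixed annulus $\{c_1\le|\cdot|\le c_2\}$ with $c_1>0$; the Pisot family hypothesis controls the drift of the directions $\Lambda^{n_k}v_k/|\Lambda^{n_k}v_k|$, so that after passing to a subsequence $\Lambda^{n_k}v_k\to u\ne 0$. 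Choosing $x_k\in D_k$ and using compactness of $\OP$, I pass to a further subsequence with $\Phi^{n_k}(T-x_k)\to T_*\in\OP$.

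The crux is passing to the limit, and this is where I expect the main obstacle to lie: the relation $\sim_s$ is only $F_\sigma$, not closed, so the limiting relation between $T_*$ and $T_*-u$ is not automatic. Here I would use the finiteness in (4) of Theorem \ref{tools}: by (3), each relation $\Phi^{n_k}(T-x_k)\sim_s\Phi^{n_k}(T-x_k)-\Lambda^{n_k}v_k$ is witnessed by the coincidence of the central patches of the $\Phi^{m_k}$-images, and only finitely many pointed coincidence types occur, so a single level $m$ works along a subsequence. Coincidence of central patches at a fixed level is a closed condition and survives the limit, yielding $T_*\sim_s T_*-u$ with $u\ne 0$. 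Running the same argument over the dense sets $D_k$ and over rational rescalings of $u$, together with transitivity of $\sim_s$ and minimality, should upgrade this to $T_*-tu\sim_s T_*$ for a set of $t$ dense in $\R$.

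Finally I would extract the sub action. Let $V$ be the nontrivial subspace spanned by the limiting direction $u$ and let $X\subseteq\overline{V\cdot T_*}$ be a minimal set for the $V$-action. The dense family $\{T_*-tu\}$ lies in a single $\sim_s$-class while its images $g(T_*-tu)$ fill a dense line in $\overline{V\cdot g(T_*)}$, so the Kronecker factor of the $V$-action separates a dense subset of one stable class; invoking the rigidity of (7) of Theorem \ref{tools} to exclude two genuinely disjoint points in a common $V$-fiber, one concludes that the $V$-maximal-equicontinuous-factor map of $X$ is injective at $T_*$, whence the $V$-action on $X$ has minimal rank $1$ and $\OP$ has an almost automorphic sub action. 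Besides the non-closedness of $\sim_s$ handled above, the secondary delicate point is precisely this last verification: one must check that the surviving limit relation produces a minimal-rank-$1$ sub action rather than merely exhibiting one additional coincidence for the full $\R^n$-action.
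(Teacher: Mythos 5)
Your opening moves (localizing at $\tau$, magnifying by $\Phi^{n_k}$ so that $\Lambda^{n_k}v_k$ lands in a fixed annulus, and worrying about the non-closedness of $\sim_s$, which you propose to handle via (3) and (4) of Theorem \ref{tools}) are all in the spirit of the paper's argument. But the endgame has a genuine gap, in two places. First, the upgrade from a single relation $T_*\sim_s T_*-u$ to ``$T_*-tu\sim_s T_*$ for $t$ dense in $\R$'' via ``rational rescalings of $u$'' does not work: stable equivalence at displacement $u$ gives no information at displacement $tu$ for non-integer $t$, and the only displacements accessible from the hypothesis are integer combinations of the $v_k$. The paper exploits exactly this: since infinitely many $v_k\to 0$ span a subspace $E$, the lattices $\mathrm{span}_{\Z}W_\delta$ generated by small spanning subsets become dense in $E\cap B_r(0)$ as $\delta\to 0$, and an additivity argument ($U_{w'}$ and $U_w$ dense $\Rightarrow$ $U_{w'\pm w}$ dense) shows the good displacements are dense in $E\cap B_r(0)$; this is then pushed forward by $\Lambda^{n_i}$ to get good displacements dense in $\Lambda^{n_i}E\cap B_{2R'}(0)$ at any prescribed scale $R'$. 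A single limiting vector $u$ carries none of this structure.

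Second, and more seriously, even granting a dense set of $t$ with $T_*-tu\sim_s T_*$, this does not show $g_X^{-1}(g_X(T_*))=\{T_*\}$: that fiber may contain tilings $T'$ that are not translates of $T_*$ at all, and your argument never engages them. Your appeal to (7) of Theorem \ref{tools} is not valid here, since (7) requires $T-v\sim_s T'-v$ for \emph{all} $v\in\R^n$, not for a dense set along a line. The paper closes this gap with two ingredients absent from your proposal: Lemma \ref{full meas}, which shows the union of $V$-minimal sets has full measure, so one can choose a $V$-minimal $X$ containing a tiling $T$ with $\sharp g^{-1}(g(T))=cr(\Phi)$ (so that, by (2) of Theorem \ref{tools}, the fiber consists of exactly $cr(\Phi)$ tilings that are pairwise non-coincident under every $\Phi^k$); and Lemma \ref{rpV}, which uses the Meyer property to upgrade $V$-regional proximality of $T,T'$ to an exact patch coincidence $B_0[S-v]=B_0[S'-v]$ with $v\in V$. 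Combining that coincidence with the dense good displacements in $\Lambda^{n_i}E$ (close to $V$) and repetitivity produces a point $x$ with $T-x\sim_s T'-x$, which then contradicts pairwise non-coincidence unless $T'=T$. Your $T_*$, by contrast, is an uncontrolled subsequential limit, with no guarantee that it lies in a $V$-minimal set or in a minimal-cardinality fiber of $g$, so even a produced coincidence would not let you conclude $T'=T_*$.
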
 

The proof of Theorem \ref{PDS} will require some buildup. Given a nontrivial subspace 
 $V$ of $\R^n$, let's say that $\emptyset\ne X\subset\OP$ is {\em $V$-minimal} if $X$ is a minimal set for the $V$-action. 

\begin{lemma}\label{full meas} If $\Phi$ is an $n$-dimensional substitution of Pisot family type and $V$ is a nontrivial subspace of $\R^n$, then the union of all $V$-minimal sets, $Y:=\cup_{X\text{ V-minimal}}X$, has full measure with respect to the unique ergodic (under the $\R^n$-action) measure on $\OP$.
\end{lemma}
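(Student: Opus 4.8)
The plan is to prove Lemma \ref{full meas} by exhibiting a single $V$-minimal set that already carries full measure, rather than by a delicate summing argument over many such sets. The natural candidate is a minimal set for the $V$-action that is ``generic'' for the ambient uniquely ergodic $\R^n$-action; the idea is that minimality under the larger group, together with unique ergodicity, forces a generic $V$-orbit closure to be fat.

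\medskip\noindent
\textbf{Step 1 (genericity via the ergodic theorem).}
First I would fix the unique $\R^n$-invariant ergodic measure $\mu$ on $\OP$. Since the full $\R^n$-action is uniquely ergodic, every point is generic for $\mu$ under the $\R^n$-action. I want to produce a set $G$ of full $\mu$-measure consisting of points whose $V$-orbit closure $\overline{T-V}$ already ``sees'' all of $\OP$ in the measure-theoretic sense. Concretely, pick a countable basis $\{U_n\}$ of open sets; for each $n$ the set $\{T : \overline{T-V}\cap U_n\neq\emptyset\}$ is, I expect, of full measure. The reason is that the indicator (or a continuous bump supported in $U_n$) has positive $\mu$-integral, and one applies a $V$-directed Birkhoff/Wiener-type averaging to conclude that a.e. $V$-orbit spends positive density of time in $U_n$; intersecting over $n$ yields a full-measure $G$ with $\overline{T-V}=\OP$ topologically? — no, that is too strong, so instead I only claim $\overline{T-V}$ meets every $U_n$.

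\medskip\noindent
\textbf{Step 2 (from genericity to a large minimal set).}
For $T\in G$, the $V$-orbit closure $X_T:=\overline{T-V}$ is a closed, $V$-invariant set meeting every basic open set, hence dense, hence equal to a genuinely large subset of $\OP$. By Zorn's lemma $X_T$ contains a $V$-minimal set $X$. The crux is to show that the union $Y$ of all $V$-minimal sets has full measure; I would argue that $Y$ is $\R^n$-invariant (translating a $V$-minimal set by any $w\in\R^n$ gives another $V$-minimal set, since $(T-V)-w=(T-w)-V$), $Y$ is measurable (it is an $F_\sigma$ or analytic set built from orbit closures), and $Y$ is nonempty. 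By ergodicity of $\mu$ under $\R^n$, any $\R^n$-invariant measurable set has measure $0$ or $1$; since $Y\supset X\neq\emptyset$ and $\mu$ has full support (the action is minimal), $\mu(Y)>0$, forcing $\mu(Y)=1$.

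\medskip\noindent
\textbf{Main obstacle.}
The delicate point, and the step I expect to be hardest, is the measurability and $\R^n$-invariance bookkeeping for $Y$ together with the genericity claim in Step 1: one must verify that the set of $T$ whose $V$-orbit closure hits a fixed open set is indeed measurable and of full measure, which requires a $V$-directional mean ergodic statement rather than the full $\R^n$ one. I would handle this by choosing a Følner sequence of boxes in $V$ and invoking the pointwise ergodic theorem for the $\R^n$-action restricted to averaging over $V$-translates; the cleanest route is to use unique ergodicity of the $\R^n$-action to get uniform convergence of continuous averages, then deduce that $V$-averages of any fixed continuous $\psi\ge0$ with $\int\psi\,d\mu>0$ are eventually positive along a.e.\ $V$-orbit. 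Once genericity is in hand the invariance-plus-ergodicity argument of Step 2 is routine, so the real work is concentrated in establishing that a.e.\ point is $V$-generic in the weak sense of meeting every open set.
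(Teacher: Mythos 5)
Your proposal has two genuine gaps, and the second one defeats the overall strategy. The genericity claim in Step 1 is false: ergodicity of the full $\R^n$-action does not pass to the $V$-subaction, so the $V$-directional averages of a bump function supported in $U_n$ need not be eventually positive along a.e.\ $V$-orbit, and it is simply not true that a.e.\ $V$-orbit closure meets every open set. The paper's own Example \ref{products} is a counterexample: in $\Omega_{\Psi\times\Phi}$ (Thue--Morse times Fibonacci) with $V$ the Fibonacci direction, \emph{every} $V$-orbit closure is of the form $\{T\}\times\Omega_{\Phi}$, which is nowhere dense. More seriously, the inference in Step 2 --- ``$Y$ is nonempty and $\R^n$-invariant and $\mu$ has full support, hence $\mu(Y)>0$'' --- is invalid: full support gives positive measure only to nonempty \emph{open} sets, while $Y$ is a union of closed, typically nowhere dense and individually null sets (each $\{T\}\times\Omega_\Phi$ above has measure zero), so nothing you have said rules out $\mu(Y)=0$, in which case the zero--one law returns $0$ rather than $1$. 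Establishing $\mu(Y)>0$ is exactly the content of the lemma, and Step 1, even were it correct, would not supply it, since a point whose $V$-orbit closure merely \emph{contains} a $V$-minimal set need not itself lie in one.

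The paper obtains positivity through the maximal equicontinuous factor $g:\OP\to\T_{max}$, where the $V$-action is equicontinuous and hence every $V$-orbit closure $\omega(z)$ is already $V$-minimal; the closed $V$-invariant set $g^{-1}(\omega(z))$ then contains a $V$-minimal set mapping onto $\omega(z)$, so $g(Y)=\T_{max}$. Since $g$ restricted to the full-measure set $\{T:\sharp g^{-1}(g(T))=cr(\Phi)\}$ is a $cr(\Phi)$-to-one covering whose sheets are measure isomorphisms (Theorem \ref{tools}(1) and the proof of Theorem 2.25 of \cite{BK}), a null $Y$ would have null image, contradicting $g(Y)=\T_{max}$; only then does ergodicity upgrade positive to full measure. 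Any repair of your argument needs some comparable mechanism transferring measure between $Y$ and its image in an equicontinuous factor; the abstract ``invariant, nonempty, therefore positive measure'' route cannot work.
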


\begin{proof} Let $(z,v)\mapsto z-v$ denote the $\R^n$-action on $\T_{max}$, so that
$g(T-v)=g(T)-v$ for all $T\in\OP$, $v\in\R^n$. For each $z\in\T_{max}$, the set $\omega(z):=cl\{z-v:v\in V\}$ is $V$-minimal. The set $g^{-1}(\omega(z))$ is closed and $V$-invariant, hence contains a $V$-minimal set, call it $X_z$. It must be the case that $g(X_z)=\omega(z)$. Thus $g(Y)\subset g(\cup_{z\in\T_{max}}X_z)=\T_{max}$. Let $G=\{z\in\T_{max}:\sharp g^{-1}(z)=cr(\Phi)\}$. $G$ has full measure in $\T_{max}$ by (1) of Theorem \ref{tools}. It is proved in \cite{BK} (see the proof of Theorem 2.25 there) that
$G':=g^{-1}(G)$ has full measure in $\OP$ and
$g|_{G'}:G'\to G$ is a topological $cr(\Phi)$-to-1 covering map. Furthermore, if $U$ is a (nonempty) relatively open set of $G$ evenly covered by the relatively open sets $U_1,\ldots,U_{cr(\Phi)}$ in $G'$, then $g|_{U_i}$ is a measure isomorphism of normalized measures for each $i$. Since $g(Y\cap g^{-1}(U))=U$ and $U$ has positive measure, it follows that 
$Y$ has positive measure. Note that if $X$ is a $V$-minimal set and $w\in\R^n$, then $X-w$ is also $V$-minimal. Thus $Y$ is $\R^n$-invariant and by ergodicity must have full measure.
\end{proof}

Given a $V$-minimal set $X$ in $\OP$, let $g_X:X\to X_{max}$ denote the maximal equicontinuous factor (with respect to the $V$-action). Since $g|_X:X\to g(X)\subset\T_{max}$ is also an equicontinuous factor, $g|_X$ factors through $g_X$: there is $p_X:X_{max}\to\T_{max}$ with $g|_X=p_X\circ g_X$. That is, if $T,T'\in X$ are regionally proximal with respect to the $V$-action, then $T$ and $T'$ are regionally proximal with respect to the $\R^n$-action.

\begin{lemma}\label{rpV} Suppose that $\Phi$ is an $n$-dimensional substitution of Pisot family type, $V$ is a nontrivial subspace of $\R^n$, and $X\subset \OP$ is a $V$-minimal set. If $T,T'\in X$ are regionally proximal with respect to the $V$-action, then there are $S,S'\in X$ and $v\in V$ so that $B_0[T]=B_0[S]$, $B_0[T']=B_0[S']$, and $B_0[S-v]=B_0[S'-v]$.
\end{lemma}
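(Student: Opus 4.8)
The plan is to unpack regional proximality for the $V$-action into approximating sequences, make two trivial reductions, and then promote approximate agreement to exact $B_0$-coincidences living inside $X$. First I would choose $T_k\to T$ and $T'_k\to T'$ in $X$, together with $v_k\in V$, so that $d(T_k-v_k,T'_k-v_k)\to 0$. By compactness of $X$ I may pass to a subsequence with $T_k-v_k\to W$ for some $W\in X$, and then $T'_k-v_k\to W$ as well. If $(v_k)$ has a bounded subsequence, say $v_k\to v\in V$, continuity of the $\R^n$-action gives $T-v=W=T'-v$, so $T=T'$ and in particular $B_0[T]=B_0[T']$. In the case $B_0[T]=B_0[T']$ the conclusion holds with $S=T$, $S'=T'$, $v=0$. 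Hence I may assume $|v_k|\to\infty$ and $B_0[T]\ne B_0[T']$.

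Second, I would record the geometry of these limits. For large $k$ the tiling $T_k$ agrees with $T$, and $T'_k$ agrees with $T'$, on large balls about $0$ (each up to a translation tending to $0$), while $T_k$ and $T'_k$ agree with one another on a large ball about the far point $v_k$ (again up to a vanishing translation), because $T_k-v_k$ and $T'_k-v_k$ both tend to $W$. Thus $T_k,T'_k\in X$ are two tilings whose patches near $0$ resemble the distinct patches $B_0[T]$ and $B_0[T']$, and which coincide, up to a small translation, on a large ball centered at $v_k\in V$. Up to two defects — the matches are only approximate, and the coincidence sits at $v_k\to\infty$ — these are precisely the witnesses $S,S'$ sought, with $v=v_k$.

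The crux is to remove both defects at once: to obtain exact equalities $B_0[S]=B_0[T]$, $B_0[S']=B_0[T']$ and $B_0[S-v]=B_0[S'-v]$ with $S,S'\in X$ and a single finite $v\in V$. For exactness I would use finite local complexity: passing to a further subsequence, the translation types of the origin patches and of the coincidence patches stabilize, and by (4) of Theorem \ref{tools} only finitely many coincidence pairs occur up to translation. The relation ``$B_0[A-v]=B_0[A'-v]$'' is closed in $(A,A')$ for each fixed $v$, so the genuine obstruction is the non-compactness of $V$: since $T\ne T'$ forces $|v_k|\to\infty$, no limit of the $T_k$ alone can retain the origin patch and the coincidence patch at finite locations, and the transverse micro-translations that would make the patches match exactly push tilings out of the merely $V$-invariant set $X$.

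To overcome this I would control the scale of the separation by renormalizing with the inflation, using $\Phi(A-u)=\Phi(A)-\Lambda u$ to trade the distant, approximate coincidence for a bounded one, extract an exact limiting coincidence by compactness, and then transport it back into $X$ via the density of $V$-orbits furnished by $V$-minimality; alternatively one may simply adapt to the $V$-action on $X$ the strong regional proximality argument of \cite{BK} underlying (6) of Theorem \ref{tools}. In either form, reconciling exactness of the $B_0$-matches, membership in $X$, and finiteness of $v$ — when the coincidence escapes to infinity and the corrections needed for exactness are transverse to $V$ — is where I expect essentially all the difficulty to lie.
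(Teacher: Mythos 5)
Your setup is sound and you have correctly located the difficulty, but the proposal stops exactly where the proof has to begin, and the single idea that closes the gap is absent. The paper's argument does not take limits of the approximating tilings at all. From the definition of regional proximality for the $V$-action one gets, for each $k$, tilings $S_k,S'_k\in X$ and $v_k\in V$ with $B_0[T]=B_0[S_k]$, $B_0[T']=B_0[S'_k]$, and $B_0[S_k-v_k]=B_0[S'_k-v_k]-w_k$ for some error vector $w_k\to 0$; the whole content of the lemma is to show that $w_k=0$ for all large $k$, so that a single large $k$ already furnishes exact witnesses $(S,S',v)=(S_k,S'_k,v_k)$ --- no limit is taken, and there is no need to worry that $v_k$ may be unbounded, since the lemma only asks for \emph{some} $v\in V$. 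The tool that forces $w_k=0$ is the Meyer property of the set $\Xi$ of difference vectors between marked tile positions in tilings of $\OP$: any finite signed sum $\Xi\pm\Xi\pm\cdots\pm\Xi$ is uniformly discrete. Comparing the exact coincidence supplied by strong regional proximality for the full $\R^n$-action ((6) of Theorem \ref{tools}) with the approximate coincidence at $v_k$ shows that each $w_k$ lies in the uniformly discrete set $\Xi-\Xi+\Xi-\Xi$, and a sequence in a uniformly discrete set tending to $0$ is eventually $0$. Nothing in your write-up plays this role, and you explicitly defer the step it resolves ("where I expect essentially all the difficulty to lie").

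Your two suggested routes do not fill the hole. Renormalizing by $\Phi^{-n}$ to make the coincidence site bounded also shrinks the region on which $T_k$ and $T'_k$ agree and destroys the $B_0$-patches at the origin, so the exact matches you need are not preserved under that rescaling; and "adapt the strong regional proximality argument of \cite{BK} to the $V$-action" is precisely the statement to be proved, not a proof of it. The reduction to $|v_k|\to\infty$ is likewise a red herring: in the actual argument $v$ is permitted to be large, and the genuine obstruction is the transverse error $w_k$, which only the Meyer property kills.
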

\begin{proof} Let $\rho_1,\ldots,\rho_m$ be the prototiles for $\Phi$ and choose $c_i\in spt(\rho_i)$ for each $i$. Let $\Xi:=\{v=(y_i-c_i)-(y_j-c_j): \text{ there is } T\in\OP\text{ with } \rho_i+y_i,\rho_j+y_j\in T\}$. Then $\Xi$ is a Meyer set (\cite{LS}). That is, $\Xi-\Xi$ is uniformly discrete, as is $\Xi\pm\Xi\pm\cdots\pm\Xi$ for any choice of signs (\cite{M}).
Since $T$ and $T'$ are also regionally proximal with respect to the full $\R^n$-action,
there are $P,P'\in\OP$ and $w\in\R^n$ so that $B_0[T]=B_0[P]$, $B_0[T']=B_0[P']$,
and $B_0[P-w]=B_0[P'-w]$ ((6) of Theorem \ref{tools}). Suppose that $\rho_i+y_i$, $\rho_j+y_j$, and $\rho_k+y_k$ are tiles in $T,T'$, resp. $S-w$, with support containing 0.
Then $((y_j+c_j)-(y_k+c_k))-((y_i+c_i)-(y_k+c_k))=(y_j+c_j)-(y_i+c_i)\in\Xi-\Xi$.
Also, since $T,T'$ are regionally proximal with respect to the $V$-action, by definition of the regional proximal relation, there are, for each $k\in\N$, tilings $S_k,S'_k\in X$, $v_k\in V$ and $w_k\in B_{\frac{1}{k}}(0)$ with 
$B_0[T]=B_0[S_k]$, $B_0[T']=B_0[S'_k]$, and $B_0[S_k-v_k]=B_0[S'_k-v_k]-w_k$.
From this we deduce that $(y_i+c_i)-(y_j+c_j)+w_k\in\Xi-\Xi$. Thus the vectors $w_k$ 
all lie in the uniformly discrete set $\Xi-\Xi+\Xi-\Xi$. As $w_k\to 0$ it must be that $w_k=0$ for large $k$: let $S=S_k$, $S'=S'_k$, and $v=v_k$ for such $k$.
\end{proof}

\begin{proof}(Of Theorem \ref{PDS}) Let $v_k\in\R^n$ be as hypothesized and let
$E:=\cap_k\text{ span}_{\R}\{v_k,v_{k+1},\ldots\}$. There is then $k$ so that $E=\text{ span}_{\R}\{v_k, v_{k+1},\ldots\}$ and without loss of generality we may assume $k=1$. Now choose $n_i\to\infty$ so that $\Lambda^{n_i}E\to V$. By this we mean that there are bases $\{e_1^i,\ldots,e_b^i\}$ for $\Lambda^{n_i}E$ so that $e_j^i\to e_j$ and 
$\{e_1,\ldots,e_b\}$ is a basis for $V$. We will show that there is a $V$-minimal set $X\subset\OP$ on which the $V$-action is almost automorphic. 

For each $\delta>0$ there is a subset $W_{\delta}={\{w_1(\delta),\ldots,w_b(\delta)}\}$ of $\{v_1,v_2,\ldots\}$ that spans $E$ with $|w_i(\delta)|<\delta$ for each $i$. By translating $\tau$ we may assume that there is $r>0$ with
$B_{3r}(0)$ contained in the support of $\tau$. Given $w\in\R^n$, let $U_w:=\{x\in\R^n:x,x+w\in int(spt(\tau))\text{ and } \tau-w-x\sim_s\tau-x\}$. If $\tau-w-x\sim_s\tau-x$, then $\tau-w-y\sim_s\tau-y$ for all $y$ sufficiently near $x$, so the sets $U_w$ are open.

For a fixed $\delta$, $0<\delta<r$, we see that $U_w$ is dense in $B_{2r}(0)$ for all $w\in W_{\delta}$. Also, $U_{-w}=U_w+w$ is dense in $B_{2r}(0)$ for such $w$. Suppose that $w'$ is such that $|w'|<r$ and $U_{w'}$ is dense in $B_{2r}(0)$ and let $w\in W_{\delta}$. then $\tau-w'-x\sim_s\tau-x$ for a dense open set of $x$ in $B_{2r}(0)$ and $\tau-w-x\sim_s\tau-x$ for a dense open set of $x$ in $B_{2r}(0)$. Thus $\tau-(w'-w)-x\sim_s\tau-x$ for a dense set of $x$ in $B_{2r}(0)$ so that $U_{w'-w}$ is dense in $B_{2r}(0)$. 
Also, $U_{-w'}=U_{w'}+w'$ is dense in $B_{2r}(0)$, so $U_{-w'-w}$ is dense in $B_{2r}(0)$ (as above) and hence $U_{w'+w}=U_{-w'-w}+(w'+w)$ is dense in $B_{2r}(0)$,
provided $|w'+w|<r$. In this way, we see that if $w_{i_1},\ldots,w_{i_k}$ is any sequence in $W_{\delta}$ with the property that $\pm w_{i_1}\pm w_{i_2}\pm\cdots\pm w_{i_l}\in B_r(0)$ for $l=1,\ldots,k$, then $U_w$ is dense in $B_{2r}(0)$, where $w=\pm w_{i_1}\pm\cdots\pm w_{i_k}$. 
This set of $w$ contains all points of the lattice $\text{span}_{\Z}W_{\delta}$ that lie in $B_r(0)$. Thus we have established that :
\begin{equation}\label{dense} \{w\in E:U_w\text{ is dense in }B_{r}(0)\}\text{ is dense in } E\cap B_{r}(0).
\end{equation}

By Lemma \ref{full meas} and the fact that $\{T:\sharp g{-1}(g(T))=cr(\Phi)\}$ has full measure, there is a $V$-minimal set $X$ and a $T\in X$ so that $\sharp g^{-1}(g(T))=cr(\Phi)$. We now argue that if $g_X:X\to X_{max}$ is the maximal equicontinuous factor map for the $V$-action on $X$, then $g_X^{-1}(g_X(T))=\{T\}$.
Suppose that $g_X(T')=g_X(T)$. Then $T'$ is regionally proximal with $T$ under the $V$-action so that, by Lemma \ref{rpV}, there are $S,S'\in X$ and $v\in V$ with $B_0[T]=B_0[S]$, $B_0[T']=B_0[S']$, and $B_0[S-v]=B_0[S'-v]$. Fix $R>|v|$. By repetitivity of patches, there is $R'$ large enough so that if $P$ is any tiling in $\OP$, then $B_{R'}[P]$ contains translates of $B_R[S]$ and $B_R[S']$. For sufficiently large $i$, $\Lambda^{n_i}B_r(0)\supset B_{2R'}(0)$. Since $\tau-x\sim_s\tau\implies\Phi^{n_i}(\tau)-\Lambda^{n_i}x\sim_s\Phi^{n_i}(\tau)$, we have from \ref{dense} above that:
\begin{equation}\label{dense2} \{w\in \Lambda^{n_i}E:U^i_w\text{ is dense in }B_{2R'}(0)\}\text{ is dense in }\Lambda^{n_i}E\cap B_{2R'}(0)
\end{equation}
where $U^i_w:=\{x\in\R^n:x,x+w\in int(spt(\Lambda^{n_i}\tau))\text{ and } \Phi^{n_i}(\tau)-w-x\sim_s\Phi^{n_i}(\tau)-x\}$.

To simplify notation, let $B:=B_R[S]$, $B':=B_R[S']$, and $Q:=B_0[S-v]+v=B_0[S'-v]+v$. For
large $i$ there are then $y=y_i$ and $y'=y'_i$ so that the patches $B+y$ and $B'+y'$
are sub patches of $\Phi^{n_i}(\tau)$ and have supports contained in $B_{R'}(0)$.
Note that $y+v\in int(spt(Q+y))$.
Since $\Lambda^{n_i}E\to V$, we may take $i$ large enough so there is $w\in\Lambda^{n_i}E\cap B_{R'}(0)$ close enough to $v$ so that $y+w\in int(spt(Q+y))$; that is, $w\in int(spt(Q))$. Moreover, by \ref{dense2}, we can choose this $w$ so that $U^i_w$ is dense in $B_{2R'}(0)$. Since $U^i_w$ is also open and $|y|,|y'|\le R'$, $(U^i_w-y)\cap (U^i_w-y')$ is dense in $B_{R'}(0)$ and there is $x$ so that:

\begin{enumerate}
\item $x\in U^i_w-y$,
\item $x\in U^i_w-y'$,
\item $x\in int(spt(B_0[T]))$,
\item $x\in int(spt(B_0[T']))$, and
\item $x\in int(spt(Q-w))$.
\end{enumerate}
Let $P:=\Phi^{n_i}(\tau)$. Since $Q+y$ and $Q+y'$ are both patches in $P$ and $0\in int(spt(Q-w))$, we have from (5) that: $$P-w-y-x\sim_s P-w-y'-x.$$
From (1) we get: $$P-y-x\sim_s P-w-y-x$$
and from (2): $$P-y'-x\sim_s P-w-y'-x.$$
Hence $$P-y-x\sim_s P-y'-x$$
and from (3) and (4) we have $B_0[T]-x\sim_s B_0[T']-x$; that is, $T-x\sim_s T'-x$.
But $T'\in g^{-1}(g(T))$ and $\sharp g^{-1}(g(T))=cr(\Phi)$. From (2) of Theorem \ref{tools}
it must be the case that $T'=T$. In other words, $\sharp g_X^{-1}(g_X(T))=1$ and the $V$-action on $X$ has minimal rank 1.

\end{proof}

We return now to consideration of the relation $R$.

\begin{lemma}\label{bound on R} Suppose that $\Phi$ is an $n$-dimensional substitution of Pisot family type that forces its border. If $\OP$ has no almost automorphic sub actions, there is $B<\infty$ so that $x_iRx_{i+1}$ for $i=1,\ldots,n-1\implies \sharp\{x_1,\ldots,x_n\}\le B$.
\end{lemma}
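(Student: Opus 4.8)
The plan is to argue by contraposition, using Theorem \ref{PDS}: assuming that no finite bound $B$ exists, I will produce a single tile $\tau$ and nonzero vectors $v_k\to 0$ for which $\tau$ and $\tau-v_k$ are densely stably equivalent on overlap, and then invoke Theorem \ref{PDS} to get an almost automorphic sub action, contradicting the hypothesis. So suppose that for every $B$ there is a chain $x_1Rx_2R\cdots Rx_m$ of \emph{distinct} points of $X_{\Phi}$ with $m>B$.

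First I want to pass from $R$ on $X_{\Phi}$ to a relation on placed tiles. Each point of $X_{\Phi}$ records a prototile together with the position of the origin in it, so to the chain there correspond placed tiles $\tau_1,\ldots,\tau_m$ (central tiles of witnessing tilings), each containing $0$ in its support, with $\tau_l\neq\tau_{l+1}$. Because $\Phi$ forces its border, (3) of Theorem \ref{tools} shows that two tilings agreeing on the tile at the origin are stably equivalent; hence the truth of $T-x\sim_s T'-x$, for $x$ interior to the tiles of $T,T'$ at $x$, depends only on those two tiles. This makes the pointwise relation $\tau\sim_s^x\tau'$ (for placed tiles with $x\in\mathrm{int}(\mathrm{spt}(\tau))\cap\mathrm{int}(\mathrm{spt}(\tau'))$) well defined and, again by forcing the border, an equivalence relation on the placed tiles whose interiors contain $x$. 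Unwinding the definition of $\approx_s$, the relation $x_lRx_{l+1}$ entails that $\{x:\tau_l\sim_s^x\tau_{l+1}\}$ is dense in the interior of $\mathrm{spt}(\tau_l)\cap\mathrm{spt}(\tau_{l+1})$; that is, consecutive $\tau_l,\tau_{l+1}$ are densely stably equivalent on overlap.

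Next comes the transfer along the chain. Suppose the tiles possess a common open region $U\subset\bigcap_l\mathrm{int}(\mathrm{spt}(\tau_l))$. Each set $D_l:=\{x\in U:\tau_l\sim_s^x\tau_{l+1}\}$ is open (openness of $\{v:T-v\sim_s T'-v\}$) and dense in $U$, so $\bigcap_l D_l$ is open and dense in $U$; for any $x$ in it, transitivity of $\sim_s^x$ gives $\tau_a\sim_s^x\tau_b$ for all $a,b$, whence every pair $\tau_a,\tau_b$ is densely stably equivalent on overlap. Now use that there are finitely many prototiles: among $\tau_1,\ldots,\tau_m$ at least $m/\nu$ are translates of a single prototile $\rho$ (where $\nu$ is the number of prototile types), say $\tau_l=\rho-p_l$ with distinct positions $p_l\in\mathrm{spt}(\rho)$. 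When $m$ is large, compactness of $\mathrm{spt}(\rho)$ forces $0<|p_a-p_b|<\epsilon$ for some $a,b$; then $\rho-p_a$ and $\rho-p_b=(\rho-p_a)-(p_b-p_a)$ are densely stably equivalent on overlap. Since the relation is translation covariant (translate both tilings by $+p_a$), $\rho$ and $\rho-v$ are densely stably equivalent on overlap, with $v=p_b-p_a$ and $0<|v|<\epsilon$. Letting $\epsilon=\epsilon(m)\to 0$ and passing to a subsequence on which $\rho$ is fixed produces $\tau:=\rho$ and $v_k\to 0$; Theorem \ref{PDS} then yields an almost automorphic sub action, the desired contradiction.

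The main obstacle is guaranteeing the common overlap region $U$ in the transfer step: the $\tau_l$ all contain the origin, but for $n>1$ nothing a priori forces $\bigcap_l\mathrm{int}(\mathrm{spt}(\tau_l))$ (or even the interior overlap of a far-apart pair reached through the chain) to be nonempty. I expect to control this with (4) of Theorem \ref{tools} — up to translation there are only finitely many coincidence pictures $(B_0[T],B_0[T'])$, so the relative placements, and hence the overlap shapes, of $\approx_s$-related central tiles range over a finite set with interiors of definite size — together with the observation that in the one-dimensional case needed for the main theorem the difficulty disappears entirely, since intervals sharing the point $0$ automatically share a subinterval containing $0$. The degenerate case in which the origin lies on a tile boundary (a lower-dimensional cell of $X_{\Phi}$) should be dispatched by a separate, routine perturbation.
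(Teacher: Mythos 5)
Your overall strategy---contraposition via Theorem \ref{PDS} together with a pigeonhole argument producing two nearby, chain-related points carried by the same prototile---is the same as the paper's, but the step you yourself flag as ``the main obstacle'' is a genuine gap, and the fix you sketch does not close it. By reducing each link of the chain to a relation between the two \emph{central tiles} of the witnessing tilings, you can only compose consecutive links on $U\subset\bigcap_l \mathrm{int}(\mathrm{spt}(\tau_l))$. For a long chain this set can be empty (chain points on cell boundaries; even in dimension one two intervals containing $0$ may merely abut there, so your ``intervals automatically share a subinterval containing $0$'' claim fails), and even when it is nonempty it shrinks with the length of the chain, so the concluding ``whence every pair $\tau_a,\tau_b$ is densely stably equivalent \emph{on overlap}'' is an unjustified jump from density on a possibly tiny $U$ to density on $\mathrm{int}(\mathrm{spt}(\tau_a)\cap\mathrm{spt}(\tau_b))$, which is what Theorem \ref{PDS} needs. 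Item (4) of Theorem \ref{tools} bounds the relative placements of \emph{pairs} but produces no common region for $m$ tiles at once, so the ``overlap shapes of definite size'' idea does not rescue this.

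The paper avoids the problem by never discarding the ambient tilings. Two facts are used: (i) each link $T_j\approx_s T_j'$ gives $T_j-v\sim_s T_j'-v$ for an open dense set of $v$ in \emph{all} of $\R^n$, in particular in a fixed ball $B_{\epsilon}(0)$ independent of the link; and (ii) border forcing supplies $\epsilon>0$ such that $\pi(S)=\pi(S')$ forces $S-v\sim_s S'-v$ for \emph{every} $|v|<\epsilon$, which glues $T_j'$ to $T_{j+1}$ on the whole ball. Intersecting finitely many open dense subsets of $B_{\epsilon}(0)$ shows that the endpoints $T,T'$ of any chain satisfy $T-v\sim_s T'-v$ densely in $B_{\epsilon}(0)$; no common tile overlap is ever invoked. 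After the pigeonhole (two distinct, close, chain-related points in the interior of one $n$-cell, realized as $T-r_i$ and $T-s_i$ with $r_i,s_i\in\mathrm{spt}(B_0[T])$), one gets $T-(r_i-s_i)-v\sim_s T-v$ densely in $B_{\epsilon/2}(0)$ and then applies $\Phi^N$ with $N$ chosen so that $\mathrm{spt}(B_0[\Phi^N(T)])\subset\Lambda^N B_{\epsilon/2}(0)$; this inflation converts density on the small ball into dense stable equivalence on the full overlap of the inflated patch with its translate by $v_i=\Lambda^N(r_i-s_i)\to 0$, which is the hypothesis of Theorem \ref{PDS}. If you replace your transfer step by (i)+(ii) and add the final inflation, the rest of your argument goes through.
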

\begin{proof} Let us say that $x\bar{R}y$ if there are $n\in\N$ and $z_1,\ldots, z_n$ so that $z_1=x$, $z_n=y$ and $z_jRz_{j+1}$ for $j=1,\ldots, n-1$. For such $z_j$, let $T_j,T_j'\in\OP$ be such that $T_j\approx_s T_j'$, $\pi(T_j)=z_j$ and $\pi(T_j')=z_{j+1}$. Since $\Phi$ forces its border, there is $\epsilon>0$ so that
if $\pi(T)=\pi(T')$, then $T-v\sim_sT'-v$ for all $v$ with $|v|<\epsilon$. Thus, $T_j-v\sim_sT_{j-1}'-v$ for each $j>1$ and $|v|<\epsilon$, and $T_j-v\sim_sT_j'-v$ for each $j\ge1$ and an open dense set of $v\in B_{\epsilon}(0)$. We see that $x\bar{R}y$, $\pi(T)=x$ and $\pi(T')=y$ implies that $T-v\sim_sT'-v$ for an open dense set of $v$ in $B_{\epsilon}(0)$.

Now, if there is no such $B$ as in the statement of the lemma, we may find two sequences $\{x_i\}_{i\in\N},\{y_i\}_{i\in\N}$ with the properties: $x_i$ and $y_i$ are in the interior of the same $n$-cell of $X_{\Phi}$ for all $i$; $x_i\ne y_i$ for all $i$; $x_i\bar{R}y_i$ for all $i$; and $x_i,y_i\to x\in X_{\Phi}$ as $i\to\infty$. Since the $x_i,y_i$ are in the interior of the same $n$-cell, there is $T\in\OP$ with $\pi(T)=x$ and $r_i,s_i\in spt(B_0[T])$ with $\pi(T-r_i)=x_i$ and $\pi(T-s_i)=y_i$. Without loss of generality, assume that $|s_i|<\epsilon/2$. We have that $T-r_i-v\sim_sT-s_i-v$ for a dense set (and open) set of $v\in B_{\epsilon}(0)$. Then $T-(r_i-s_i)-v\sim_sT-v$ for a set $S_i$ of $v$ that is open and dense in $B_{\epsilon/2}(0)$. Then $T-(r_i-s_i)-v\sim_sT-v$ for a dense set of $v$ in $B_{\epsilon/2}(0)$ (namely,  for $v$ in $B_{\epsilon/2}(0)\cap\cap_iS_i$). Let $N$ be large enough so that the support of $Q:=B_0[\Phi^N(T)]$ is contained in $\Lambda^NB_{\epsilon/2}(0)$. Then $Q$ and $Q-v_i$ are densely stably related on overlap for $v_i:=\Lambda^N(r_i-s_i)$. By Theorem \ref{PDS},  $\OP$ has an almost automorphic sub action.
\end{proof}

Suppose that the $\R^n$-action on $\OP$ does not have an almost automorphic sub action.
Define $\asymp_s$ on $X_{\Phi}$ by $x\asymp_s x'$ iff there are $x_1,\ldots,x_n\in X_{\phi}$ with $x=x_1,\, x'=x_n$, and $x_iRx_{i+1}, i=1,\ldots,n-1$. (That is, $\asymp_s$ is the transitive closure of $R$.)

\begin{lemma}\label{equiv rel}
If the Pisot family tiling space $\OP$ does not have an almost automorphic sub action,  then $\asymp_s$ is a closed equivalence relation on $X_{\Phi}$.
\end{lemma}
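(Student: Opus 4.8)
The plan is to separate the purely formal content from the real work, which lies entirely in closedness. Since $R$ is reflexive and symmetric, its transitive closure $\asymp_s$ is automatically reflexive, symmetric, and transitive, so it is an equivalence relation with no further argument. The whole difficulty is to show that $\asymp_s$ is a closed subset of $X_\Phi \times X_\Phi$. As the Fibonacci example flagged in the text shows, the transitive closure of a closed relation need not be closed, so the argument must genuinely use the uniform bound $B$ supplied by Lemma \ref{bound on R} (this is exactly where the hypothesis of no almost automorphic sub action enters).

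First I would record that $R$ itself is closed. Recall that $\approx_s$ is a closed, hence compact, subset of $\OP \times \OP$ (from (4) of Theorem \ref{tools}). Since $xRx'$ holds precisely when $(x,x') = (\pi(T),\pi(T'))$ for some $(T,T') \in {\approx_s}$, we have $R = (\pi \times \pi)(\approx_s)$, the image of a compact set under the continuous map $\pi \times \pi$. A continuous image of a compact set is compact, and $X_\Phi \times X_\Phi$ is Hausdorff, so $R$ is closed.

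Next I would use Lemma \ref{bound on R} to express $\asymp_s$ as a \emph{finite} composition of $R$ with itself. Writing $R^{\circ k}$ for the $k$-fold relational composition (chains with $k$ steps), we have $\asymp_s = \bigcup_{k \ge 1} R^{\circ k}$ by definition. If $x \asymp_s x'$, pick a chain $x = x_1 R x_2 R \cdots R x_m = x'$; deleting the loop between any repeated entries yields a chain with pairwise distinct entries, and by Lemma \ref{bound on R} such a chain has at most $B$ entries, hence at most $B-1$ steps. Reflexivity of $R$ gives $R^{\circ k} \subseteq R^{\circ(k+1)}$ (pad a chain with a trivial step), so all of these relations sit inside $R^{\circ(B-1)}$, and I conclude $\asymp_s = R^{\circ(B-1)}$, a single finite composition. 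It then remains to show that a finite composition of the closed relation $R$ on the compact space $X_\Phi$ is closed, which I would prove by induction on $k$: the base case is the closedness of $R$ above, and for the inductive step the set $\{(x,y,z) : (x,y) \in R^{\circ(k-1)},\ (y,z) \in R\}$ is closed in $X_\Phi^3$ (an intersection of preimages of closed sets under coordinate projections), hence compact, so its continuous image under the projection off the middle coordinate, which is exactly $R^{\circ k}$, is compact and therefore closed. Thus $\asymp_s = R^{\circ(B-1)}$ is closed.

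The main obstacle is the middle step, the passage from transitive closure to a finite composition. The uniform bound $B$ of Lemma \ref{bound on R} is indispensable here, and it is precisely the place where the absence of almost automorphic sub actions is used: without such a bound, $\asymp_s$ would merely be an increasing union of closed relations, whose union need not be closed, which is exactly the Fibonacci pathology. Once the composition length is bounded, the closedness of the single factor $R$ together with compactness of $X_\Phi$ reduces everything to the routine fact that finite compositions of closed relations on a compact Hausdorff space are closed.
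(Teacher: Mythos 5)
Your proof is correct and follows essentially the same route as the paper: establish that $R$ is closed (you via compactness of $\approx_s$ and the image under $\pi\times\pi$, the paper via a subsequence argument), then use the uniform bound $B$ from Lemma \ref{bound on R} to reduce $\asymp_s$ to chains of bounded length, and conclude closedness (you by writing $\asymp_s=R^{\circ(B-1)}$ and citing closedness of finite compositions on a compact space, the paper by a diagonal subsequence argument on length-$B$ chains). These are the same ideas in slightly different packaging, so no further comment is needed.
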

\begin{proof} By definition, $\asymp_s$ is an equivalence relation. To see that it's closed, we first show that the relation $R$ is closed. Suppose that $x^i,y^i\in X_{\Phi}$ are such that $x^i\to x$, $y^i\to y$, and $x^iRy^i$ for $i\in\N$. There are then $T_i,T'_i\in\OP$ with $T_i\approx_sT'_i$ and $\pi(T_i)=x^i,\pi(T'_i)=y^i$. Passing to a subsequence,
we may suppose that $T_i\to T, T_i'\to T'$. Then $\pi(T)=x$, $\pi(T')=y$, and since, as observed earlier, $\approx_s$ is closed, $T\approx_s T'$. That is, $xRy$, and $R$ is closed.

Now suppose that $x^i,y^i, x, y$ are as above , except $x^i\asymp_sy^i$. By Lemma \ref{bound on R} there are $B\in\N$ and $z^i_j$, $j\in\{1,\ldots,B\}, i\in\N$, with $x^i=z^i_1$, $y^i=z^i_B$ and $z^i_jRz^i_{j+1}$ for $1\le j\le B-1$. Passing to a subsequence, we may suppose that $z^i_j\to z_j$ for $1\le j\le B$ as $i\to\infty$. Then,
from the above, $z_jRz_{j+1}$ for $1\le j\le B-1$, so $x\asymp_s y$ and $\asymp_s$ is closed.
\end{proof}

If $\tau$ is a tile for $\Phi$ and $v\in spt(\tau)$, by $\pi(\tau-v)$ we mean $\pi(T-v)$ for any $T\in\OP$ with $\tau\in T$.
We call a point $x\in X_{\Phi}$ an {\em inner point} if each element of $[x]$ is an interior point of its $n$-face, and if $v\in spt(\tau)$, $\tau$ a tile for $\Phi$ , and $\pi(\tau-v)$ is an inner point, we'll say that {\em $v$ is an inner point of $\tau$}. A {\em stack} is a collection $\bar{\tau}=\{\tau_1,\ldots,\tau_k\}$ of tiles with the property that $spt(\bar{\tau}):=\cap_{i=1}^k\tau_i$ has nonempty interior and for each $v\in int(spt(\bar{\tau}))$,
$[\pi(\tau_j-v)]=\{\pi(\tau_1-v),\ldots,\pi(\tau_k-v)\}$ for some (equivalently, all) $j\in\{1,\ldots,k\}$. Note that each inner point $v$ of a tile $\tau$ uniquely determines a stack
$s(\tau,v):=\{\text{ tiles }\sigma: v\in \text{ spt}(\sigma) \text{ and } \pi(\sigma-v)\asymp_s\pi(\tau-v)\}$. It follows from  (4) of Theorem \ref{tools} and Lemma \ref{bound on R} that
there are only finitely many translation equivalence classes of stacks. Each tile $\tau$ for $\Phi$ is thus tiled by the finitely many stacks $s(\tau,v),\, v$ an inner point of $\tau$.

We now define a substitution $\Phi_s$ on stacks. Let $\bar{\tau}=\{\tau_1,\ldots,\tau_k\}$ be a stack with support $e=\cap_{i=1}^kspt(\tau_i)$ and let $\Phi(\tau_1)=\{\sigma_1,\ldots,\sigma_l\}$. Then $$\Phi_s(\bar{\tau}):=\{s(\sigma_j,v): v\in\Lambda e, v \text{ an inner point of }\sigma_j, j=1,\ldots,l\}.$$ Note that since $f_{\Phi}$ preserves $\asymp_s$ the definition of $\Phi_s$ is not altered by replacing $\tau_1$ by $\tau_i$ for any $i\in\{1,\ldots k\}$. Let $\Sigma:\OP\to \Omega_{\Phi_s}$ by $$\Sigma(T):=\{s(\tau,v):\tau\in T,\, v\text{ an inner point of }\tau\}.$$

\begin{Theorem}\label{minimal model} Suppose that $\Phi$ is  an $n$-dimensional Pisot family substitution that forces the border and has convex tiles. Suppose also that $\OP$ does not have an almost automorphic sub action. Let $g:\OP\to \T_{\max}$ be the map onto the maximal equicontinuous factor. Then: 
\begin{enumerate}
\item $\Phi_s$ is Pisot family, forces the border, and is primitive and non-periodic;
\item The map $\Sigma:\OP\to \Omega_{\Phi_s}$ is $a.e.$ one-to-one, and semi-conjugates $\Phi$ with $\Phi_s$ and the $\R^n$-action on $\OP$ with that on $\Omega_{\Phi_s}$;
\item $\Omega_{\Phi_s}$ is homeomorphic with $\OP/\approx_s$ by a homeomorphism that conjugates both translation and substitution dynamics and the 
relation $\approx_s$ on $\Omega_{\Phi_s}$ is trivial;
\item The map $g$ factors as $g=g_s\circ \Sigma$, where $g_s:\Omega_{\Phi_s}\to \T_{max}$ is the maximal equicontinuous factor map of $\Omega_{\Phi_s}$.
\end{enumerate}
\end{Theorem}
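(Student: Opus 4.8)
The plan is to regard $\Sigma$ as a concrete realization of the quotient of $\OP$ by $\approx_s$ in the guise of a stack substitution, and to prove the four assertions in the order (2)$\to$(3)$\to$(1)$\to$(4), since the later parts lean on $\Sigma$ being a semi-conjugacy with precisely understood fibers. First I would record the functorial properties of $\Sigma$. Continuity and the two equivariances
\[\Sigma(T-v)=\Sigma(T)-v\quad\text{and}\quad\Sigma\circ\Phi=\Phi_s\circ\Sigma\]
are immediate from the definitions: a stack $s(\tau,v)$ is a translation-invariant, $f_{\Phi}$-equivariant function of the local $\asymp_s$-data at $v$, and $\Phi_s$ was defined exactly so that substituting a stack and then cutting into stacks agrees with cutting $\Phi(\tau)$ into stacks. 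This yields the semi-conjugacy half of (2) and shows $\Sigma(\OP)\subset\Omega_{\Phi_s}$; surjectivity I would defer until primitivity of $\Phi_s$ is in hand, at which point minimality of $\Omega_{\Phi_s}$ forces the compact invariant set $\Sigma(\OP)$ to be all of $\Omega_{\Phi_s}$.

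The heart of the argument is the equivalence
\[\Sigma(T)=\Sigma(T')\iff T\approx_s T'.\]
For the direction ($\Leftarrow$): since $\approx_s$ is translation invariant, $T\approx_s T'$ gives $T-v\approx_s T'-v$, hence $\pi(T-v)\,R\,\pi(T'-v)$ and so $\pi(T-v)\asymp_s\pi(T'-v)$ for every $v$; reading this at inner points shows the tile of $T$ at $v$ and the tile of $T'$ at $v$ lie in a common stack, so $\Sigma(T)$ and $\Sigma(T')$ have identical stacks on a dense set and therefore coincide. For ($\Rightarrow$): equality of the stack tilings gives $\pi(T-v)\asymp_s\pi(T'-v)$ at every inner point $v$, and feeding this into the border-forcing argument already used in Lemma \ref{bound on R} upgrades it to $T-w\sim_s T'-w$ on an open dense subset of a neighborhood of each inner point, hence on a dense subset of $\R^n$ — exactly the density clause of $\approx_s$. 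The clause $g(T)=g(T')$ then comes for free, because stable equivalence is carried into the equicontinuous factor: the endomorphism induced by $\Phi$ on $\T_{max}$ expands, so forward-asymptotic points have equal $g$-image, and $T-w\sim_s T'-w$ forces $g(T-w)=g(T'-w)$, hence $g(T)=g(T')$.

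With this equivalence the remaining parts follow. The fiber of $\Sigma$ over $\Sigma(T)$ is exactly the $\approx_s$-class of $T$, so $\Sigma$ descends to a continuous bijection $\OP/\approx_s\to\Omega_{\Phi_s}$, a homeomorphism by compactness that conjugates both dynamics; and since distinct tiles of one stack tiling never lie in a common further stack, $\approx_s$ is trivial on $\Omega_{\Phi_s}$, giving (3). For the a.e. statement in (2) I would invoke Theorem \ref{tools}: on the full-measure set where $\sharp g^{-1}(g(T))=cr(\Phi)$, the representatives furnished by (2) of Theorem \ref{tools} satisfy $\Phi^k(T_i)\cap\Phi^k(T_j)=\emptyset$ for all $k$, so no two elements of a generic fiber are $\approx_s$-related; thus $\approx_s$ is a.e. trivial and $\Sigma$ is a.e. one-to-one. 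For (1): $\Phi_s$ uses the same linear expansion $\Lambda$ as $\Phi$, so $spec(\Lambda)$ and the Pisot family condition are unchanged; primitivity and border forcing for $\Phi_s$ are inherited (a strictly positive power of $M$ produces every tile, hence every stack, inside each substituted stack, and a border-forcing level for $\Phi$ yields one for stacks); non-periodicity follows from a.e. injectivity, since a period $p$ of a generic $\Sigma(T)$ would give $T-p\approx_s T$ and hence $T-p=T$. Finally (4) is a universal-property argument: $g$ kills $\approx_s$ (it is part of the definition of the relation), so $g=\bar g\circ\Sigma$ with $\bar g$ equicontinuous, whence $\bar g=p\circ g_s$ by maximality of $g_s$; running the maximality of $g$ and of $g_s$ against each other shows $p$ is an isomorphism identifying $\bar g$ with $g_s$, i.e. $g=g_s\circ\Sigma$.

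I expect the genuine obstacle to be the ($\Rightarrow$) direction of the displayed equivalence, namely converting the purely combinatorial statement ``the two stack tilings are equal,'' which remembers only the transitive closure $\asymp_s$ of $R$ on the finite complex $X_{\Phi}$, back into the dynamical relation $\approx_s$ on $\OP$. This is precisely where border forcing and the uniform bound of Lemma \ref{bound on R} are indispensable — without them a chain $\pi(T-v)\,R\cdots R\,\pi(T'-v)$ need not propagate to stable equivalence of $T,T'$ on an honestly dense set of translates — and it is also the step that silently uses the convexity of tiles, to guarantee that stacks have connected supports so that inner points are dense and the stack tiling is determined by its values there.
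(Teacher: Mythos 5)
Your architecture matches the paper's quite closely (stack substitution realizing $\OP/\approx_s$, the central equivalence $\Sigma(T)=\Sigma(T')\Leftrightarrow T\approx_s T'$, a.e.\ injectivity from generic fibers of cardinality $cr(\Phi)$, and the universal-property argument for (4)), and several of your steps are fine or even slightly cleaner than the paper's (your derivation of non-periodicity from a.e.\ injectivity is more direct than the paper's $V$-minimal-set argument). But there is one genuine gap, and it is not where you predicted. In the direction $\Sigma(T)=\Sigma(T')\Rightarrow T\approx_s T'$ you correctly extract dense stable equivalence, but you then claim the clause $g(T)=g(T')$ ``comes for free'' because ``the endomorphism induced by $\Phi$ on $\T_{max}$ expands, so forward-asymptotic points have equal $g$-image.'' That is false for Pisot dilations of degree $d>1$: $\T_{max}$ is an $nd$-dimensional solenoid and the automorphism induced by $\Phi$ has, besides the expanding directions for $\lambda$, contracting directions for each algebraic conjugate of $\lambda$ inside the unit circle. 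Its stable set of $0$ is nontrivial, so $d(\Phi^k(T),\Phi^k(T'))\to 0$ does not force $g(T)=g(T')$. Concretely, for the Fibonacci substitution two tilings sharing the tile at the origin satisfy $T\sim_s T'$ with $k=0$ yet generically have different images in $\T^2$. This is exactly why the definition of $\approx_s$ carries the clause $g(T)=g(T')$ as a separate requirement rather than as a consequence of dense stable equivalence, and why the paper must do real work to verify it.

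The paper closes this gap by proving the equivalence for the induced map $\hat{\Sigma}:\OP\to\inv\Phi_s$ rather than for $\Sigma$ itself: from $\Sigma(\Phi^{-k}(T))=\Sigma(\Phi^{-k}(T'))$ for \emph{all} $k$, one finds tiles $\tau,\tau'$ with overlapping supports and vectors $v_k$ with $\tau-v_k\in\Phi^{-k}(T)$, $\tau'-v_k\in\Phi^{-k}(T')$, picks a common patch $\eta\subset\Phi^l(\tau)\cap\Phi^l(\tau')$, and concludes that $\Phi^{k-l}(\eta)-\Lambda^k v_k\subset T\cap T'$ for all $k\ge l$. These shared patches contain arbitrarily large balls, so $T$ and $T'$ are proximal, hence regionally proximal, hence $g(T)=g(T')$. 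Note that you cannot shortcut this by applying $\Phi_s^{-k}$ to the single equality $\Sigma(T)=\Sigma(T')$, because invertibility of $\Phi_s$ on $\Omega_{\Phi_s}$ is a consequence of non-periodicity, which in your scheme is itself derived from the equivalence; the paper avoids this circularity by carrying the inverse-limit space $\inv\Phi_s$ through the argument and only identifying it with $\Omega_{\Phi_s}$ after non-periodicity is established. Your closing paragraph locates the difficulty in upgrading $\asymp_s$-chains to dense stable equivalence, but that part is genuinely handled by border forcing and Lemma \ref{bound on R}; the real obstacle is the $g$-clause. Finally, your one-line justification that $\approx_s$ is trivial on $\Omega_{\Phi_s}$ (``distinct tiles of one stack tiling never lie in a common further stack'') addresses the wrong relation; the paper instead pulls dense stable relatedness and $g_s$-equality back along $\Sigma$ to get $T\approx_s T'$ in $\OP$, hence $\Sigma(T)=\Sigma(T')$.
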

\begin{proof}

Suppose that $T,S\in\OP$ are such that $\pi(T)\asymp_s\pi(S)$. There are then
$T_i,S_i\in\OP$ with
$\pi(T)=\pi(T_1)$, $T_i\approx_s S_i$, $\pi(T_{i+1})=\pi(S_i)$, $i=1,\ldots, k-1$, and $\pi(S_k)=\pi(S)$. Since $\Phi$ forces the border, there is $\epsilon>0$ and $m\in N$ so that if $T',S'\in\OP$ and $\pi(T')=\pi(S')$, then $\pi(\Phi^m(T'-v))=\pi(\Phi^m(S'-v))$ for all $v\in B_{\epsilon}(0)$. Together with the fact that $\Phi$ preserves $\approx_s$,
this means that $$\pi(T)\asymp_s\pi(S)\implies \pi(\Phi^m(T-v))\asymp_s\pi(\Phi^m(S-v))$$ for all $v\in B_{\epsilon}(0)$.

Towards proving that $\Phi_s$ forces the border, let's examine what it means for stacks to be adjacent in an element of $\Omega_{\Phi_s}$.  First suppose that stacks $s(\sigma_1,v_1)$ and $s(\sigma_2,v_2)$ in $\Phi_s(\bar{\tau})$ (see the definition of $\Phi_s$) are distinct and meet along an $(n-1)$-cell. Say $v\in spt(s(\sigma_1,v_1))\cap spt(s(\sigma_1,v_1))$. Then there is a tiling $T\in\OP$ and tiles $\eta_i\in T$, $\eta_i\in s(\sigma_i,v_i)$, $i=1,2$, with $v\in spt(\eta_1)\cap spt(\eta_2)$. Note that $s(\sigma_i,v_i)=s(\eta_i,v_i)$, $i=1,2$. This structure persists under application of $\Phi_s$ and we see, since every allowed adjacency occurs in some $\Phi^k(\bar{\tau})$,
that if $\bar{\sigma}$ and $\bar{\sigma}'$ are adjacent stacks in an element of $\Omega_{\Phi_s}$ with $v$ in the intersection of their supports, then there is $T\in \OP$
and tiles $\eta,\eta'\in T$ so that $\bar{\sigma}=s(\eta,w)$, $\bar{\sigma}'=s(\eta',w')$, with $w,w'$ as close to $v$ as we wish.

Suppose now that $\bar{S_1},\bar{S_2}\in\Omega_{\Phi_s}$ are such that $\bar{\eta}\in\bar{S_1}\cap\bar{S_2}$, and there are stacks $\bar{\sigma}_1\in\bar{S_1}$, $\bar{\sigma}_2\in\bar{S_2}$ that meet in an $(n-1)$-cell contained in the boundary of the support of  
$\bar{\eta}$; say $v$ is in this common boundary. There are then $T,S\in\OP$ and $\eta_1,\sigma_1\in T$, $\eta_2,\sigma_2\in S$, with $s(\eta_1,w)=s(\eta_2,w)=\bar{\eta}$, 
$s(\sigma_1,w')=\bar{\sigma}_1$, $s(\sigma_2,w')=\bar{\sigma}_2$, and $w,w'$ as close to $v$ as desired. Since, $w$ is as close to v as we wish, $\pi(T-w)=\pi(\eta_1-w)\asymp_s\pi(\eta_2-w)=\pi(S-w)$, and $\asymp_s$ is closed, we have $\pi(T-v)\asymp_s\pi(S-v)$. Take $w'$ with $|w'-v|<\epsilon$. Then $\pi(\Phi^m(\sigma_1-w'))=\pi(\Phi^m(T-w'))\asymp_s\pi(\Phi^m(S-w'))=\pi(\Phi^m(\sigma_2-w'))$. Taking $w'$ close enough to $v$ so that tiles $\sigma_i'\in\Phi^m(\sigma_i)$ with $\Lambda^mv$ in the boundary of their supports, $i=1,2$, also contain $\Lambda^mw'$ in their supports, we have $s(\sigma_1',\Lambda^mw')=s(\sigma_2',\Lambda^mw')$. That is, The stacks in $\Phi_s^m(\bar{S}_1)$ and $\Phi_s^m(\bar{S}_2)$ that meet $\Phi_s^m(\bar{\eta})$ at $\Lambda^mv$ are identical. In other words, $\Phi_s$ forces the border.
We will prove non-periodicity later.

Let $\hat{\Phi}_s$ denote the shift homeomorphism on $\inv \Phi_s$ and let $\hat{\Sigma}:\OP\to\inv\Phi_s$ denote the map induced by $\Sigma$. We will prove that
$\hat{\Sigma}(T)=\hat{\Sigma}(T')\Leftrightarrow T\approx_s T'$. If $T\approx_s T'$, then 
$\Phi^{-k}(T)\approx_s\Phi^{-k}(T')$ for all $k\in\N$, so $\pi(\Phi^{-k}(T)-v)\asymp_s\pi(\Phi^{-k}(T')-v)$ for all $v\in\R^n$, $k\in\N$. Then $\Sigma(\Phi^{-k}(T))=\Sigma(\Phi^{-k}(T'))$ for all $k\in\N$ and $\hat{\Sigma}(T)=\hat{\Sigma}(T')$.

For the converse, first note that if $\Sigma(S)=\Sigma(S')$, then $S-v\sim_s S'-v$ for a dense set of $v\in\R^n$. Indeed, If $\bar{\sigma}$ is a any stack with $\eta,\tau\in \bar{\sigma}$ so that $\pi(\eta-v) R\pi(\tau-v)$ for $v\in int(spt(\eta)\cap spt(\tau))$, then 
$\eta-v\sim_s\tau-v$ for a dense open set of such $v$. Thus if $\sigma\in (\bar{\sigma}\cap S)$ and $\sigma'\in (\bar{\sigma}\cap S')$, $\sigma-v\sim_s\sigma'-v$ for a dense set of $v$ in $spt(\bar{\sigma})$, and hence $S-v\sim_s S'-v$ for a dense set of $v\in\R^n$. 

Suppose that $\hat{\Sigma}(T)=\hat{\Sigma}(T')$. We have that $\Phi^{-k}(T)$ and $\Phi^{-k}(T')$ are densely stably related for all $k\in\N$. Since there are only finitely many stacks up to translation, there are tiles $\tau,\tau'$ with $int(spt(\tau\cap\tau'))\ne\emptyset$ and $v_k\in\R^n$ so that $\tau-v_k\in\Phi^{-k}(T)$ and $\tau'-v_k\in\Phi^{-k}(T')$ for all $k\in\N$. Pick $w\in int(spt(\tau)\cap(spt(\tau')))$ with $\tau-w\sim_s\tau'-w$; say $\eta\subset \Phi^l(\tau)\cap\Phi^l(\tau')$. Then $\Phi^{(k-l)}(\eta)-\Lambda^kv_k\in T\cap T'$ for all $k\ge l$. This means that $T$ and $T'$ are proximal under the $\R^n$ action. Hence $g(T)=g(T')$ so that $T\approx_s T'$. 

Thus we may identify $\inv \Phi_s$ with $\Omega/\approx_s$. 

We argue now that $\Phi_s$ is non-periodic. Suppose not. If $0\ne p\in \R^n$ and $\bar{T}\in\Omega_{\Phi_s}$ are such that $\bar{T}-p=\bar{T}$, then $\bar{S}-p=\bar{S}$ for all $\bar{S}\in\Omega_{\Phi_s}$ (by minimality of the $\R^n$-action). Let $V:=span_{\R}\{p:p \text{ is a period for the translation action}\}$. Then $V$ is a nontrivial subspace of $\R^n$, $\Lambda V=V$, and each $V$-orbit, $\{\bar{T}-v:v\in V\}$, is a $k$-torus ($k=$ dim$(V)$). It follows that every $V$-orbit closure in $\OP/\approx_s\simeq\inv \Phi_s$ is minimal and equicontinuous (it's the $\R^k$-action on a $k$-solenoid or torus). Since $\OP$ does not have an almost automorphic sub action, $cr(\Phi)>1$.
As in the proof of Theorem \ref{PDS}, there must be a $V$-minimal set $X$ in $\OP$ containing a point $T$ with $\sharp g^{-1}(g(T))=cr(\Phi)$. Let $A:\OP\to\Omega_{\Phi}/\approx_s$ denote the quotient map. Since
$A|_X$ factors the $V$-action onto an equicontinuous action, if  $\sharp( (A|_X)^{-1}(A(T)))=1$ then the $V$-action on $X$ has minimal rank 1, contrary to the hypothesis. Thus there must be $T'\ne T\in X$ with $A(T')=A(T)$. But then $g(T')=g(T)$ and $T'-w\sim_s T-w$ for dense $w\in\R^n$. This is not possible (by (2) of Theorem \ref{tools} and $\sharp g^{-1}(g(T))=cr(\Phi)$). Thus, $\Phi_s$ is non-periodic.

It follows from non-periodicity that $\Phi_s:\Omega_{\Phi_s}\to\Omega_{\Phi_s}$ is a homeomorphism (\cite{Sol}). This means that $\Omega_{\Phi_s},\,\inv\Phi_s$, and
$\OP/\approx_s$ are isomorphic. Since $\Sigma(T)=\Sigma(T')\implies T\approx_s T'$, 
$g$ factors through $\Omega_{\Phi_s}$, say $g=g_s\circ\Sigma$, and $g_s$ must (by maximality of $\T_{max}$ for $\OP$) be the maximal equicontinuous factor map of $\Omega_{\Phi_s}$. Finally, it is clear from definitions that if $\Sigma(T)$ and $\Sigma(T')$ are densely stably related under $\Phi_s$ then $T$ and $T'$ are densely stably related under $\Phi$. Also, $g_s(\Sigma(T))=g_s(\Sigma(T'))\implies g(T)=g(T')$. Hence
$\Sigma(T)\approx_s\Sigma(T')$ in $\Omega_{\Phi_s}$ implies that $T\approx_sT'$
in $\OP$, which means $\Sigma(T)=\Sigma(T')$. That is, $\approx_s$ is trivial on $\Omega_{\Phi_s}$.
\end{proof}

\begin{lemma}\label{distal twin} Suppose that $\Psi$ is  an $n$-dimensional Pisot family substitution with $cr(\Psi)=2$ and suppose that $\approx_s$ is trivial on $\Omega_{\Psi}$. Then for each $T\in\Omega_{\Psi}$ there is a unique
$T'\in\Omega_{\Psi}$ with the properties: $T$ and $T'$ are regionally proximal and
$T\cap T'=\emptyset$.
\end{lemma}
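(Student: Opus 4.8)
The plan is to realize the partner as the value of the ``sheet-swap'' of the generic double cover, and to read off uniqueness from the fact that $cr(\Psi)=2$ forbids three pairwise disjoint tilings in a single fiber. Throughout I use that, for Pisot family substitutions, $T$ and $T'$ are regionally proximal exactly when $g(T)=g(T')$, i.e. when they lie in a common fiber of the maximal equicontinuous factor map (Theorem \ref{tools}(6), together with the fact that the equicontinuous structure relation is the regional proximal relation). I also use that in the application $\Psi=\Phi_s$ forces its border (Theorem \ref{minimal model}(1)), so that sharing a tile upgrades to stable equivalence at interior points; the general statement reduces to this case.

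For existence, fix $T$ and set $z=g(T)$. By Theorem \ref{tools}(2) there are $T_1,T_2\in g^{-1}(z)$ with $\Phi^k(T_1)\cap\Phi^k(T_2)=\emptyset$ for all $k\in\Z$ and such that every element of $g^{-1}(z)$ meets $T_1$ or $T_2$. If $T\cap T_2=\emptyset$ (resp. $T\cap T_1=\emptyset$) then $T_2$ (resp. $T_1$) is a disjoint partner; the only remaining case is that $T$ meets both $T_1$ and $T_2$, whence $T\notin\{T_1,T_2\}$. In every case I build a partner uniformly: since $g$ is a.e. $2$-to-$1$ (Theorem \ref{tools}(1)), choose $T_n\to T$ with $\sharp g^{-1}(g(T_n))=2$, let $T_n'$ be the other point of the fiber of $T_n$, and pass to a subsequence with $T_n'\to T'$; then $g(T')=g(T)$. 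By Theorem \ref{tools}(2) each pair $\{T_n,T_n'\}$ satisfies $\Phi^k(T_n)\cap\Phi^k(T_n')=\emptyset$ for every $k$. If $\Phi^k(T)$ and $\Phi^k(T')$ shared a tile for some $k$, border forcing would give $\Phi^k(T)-w\sim_s\Phi^k(T')-w$ for $w$ interior to the shared tile, hence $B_0[\Phi^{k+m}(T)-\Lambda^m w]=B_0[\Phi^{k+m}(T')-\Lambda^m w]$ for some $m$ (Theorem \ref{tools}(3)); by continuity of $\Phi$ and convergence, $\Phi^{k+m}(T_n)$ and $\Phi^{k+m}(T_n')$ would then share a tile for large $n$, contradicting their disjointness. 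Thus $T^*:=T'$ is disjoint from $T$, and in fact $\Phi^k(T)\cap\Phi^k(T^*)=\emptyset$ for all $k$.

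For uniqueness, suppose $T'$ and $T''$ both lie in $g^{-1}(z)$ and are disjoint from $T$. If $T'\cap T''=\emptyset$ as well, then $\{T,T',T''\}$ is a family of three pairwise disjoint tilings in one fiber, contradicting that $cr(\Psi)=2$ is the maximal size of such a family (the coincidence-rank description from the introduction, equal to the minimal rank by Theorem \ref{tools}). Hence $T'\cap T''\neq\emptyset$, and it remains to deduce $T'=T''$; since $T'$ and $T''$ are in the same fiber, by triviality of $\approx_s$ it suffices to show $\{v:T'-v\sim_s T''-v\}$ is dense. Here I would use $T$ and its strong partner $T^*$ as the canonical pair: $\{T,T^*\}$ is itself a Theorem \ref{tools}(2) pair, since any fiber element meeting neither would produce a third pairwise disjoint tiling. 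For any fiber element $S$ one shows that $\{v:S-v\sim_s T-v\}\cup\{v:S-v\sim_s T^*-v\}$ is dense: sharing a tile gives stable equivalence at interior points (border forcing), the covering property of Theorem \ref{tools}(2) makes these ``$T$-type'' and ``$T^*$-type'' sets relatively dense at every scale, and applying this after $\Phi^{n}$ and pulling back by the expanding $\Lambda^{-n}$ shrinks the gaps to zero. Since $T'$ and $T''$ are disjoint from $T$ they should sit on the $T^*$-side, and matching their stable types forces $\{v:T'-v\sim_s T''-v\}$ dense.

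The main obstacle is exactly this matching step. Level-$0$ disjointness from $T$ is not manifestly preserved by $\Phi$: a partner could a priori be stably equivalent to $T$ on part of space (``switch sheets'') while remaining tile-disjoint from $T$, so the $T$-type set of $T'$ need not be empty, only non-dense (by triviality of $\approx_s$). Ruling out out-of-sync switching between $T'$ and $T''$ — equivalently, showing that the two disjoint partners determine the same stable type on a dense set of translates — is the delicate heart, and I expect to close it using the rigidity in Theorem \ref{tools}(2) together with Theorem \ref{tools}(7) (if $T'-v\sim_s T''-v$ for all $v$ then $T'=T''$) to promote dense agreement to equality.
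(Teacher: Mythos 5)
Your uniqueness argument has a genuine gap, and you have in fact identified it yourself: the ``matching step'' showing that two disjoint partners $T'$ and $T''$ of $T$ satisfy $T'-v\sim_s T''-v$ on a dense set of $v$ is precisely what has to be proved, and the out-of-sync sheet-switching you worry about is the real obstruction; your proposal ends with a hope of closing it rather than a proof. The paper closes it with a limit argument that is absent from your proposal. First, (4) of Theorem \ref{tools} (finitely many translation classes of pairs $(B_0[S],B_0[S'])$ with $g(S)=g(S')$) yields a single $k$ such that if a regionally proximal pair coincides at some level $l$ then it coincides at level $k$; applying this to all $\Psi$-images shows that disjointness of a regionally proximal pair persists under every power of $\Psi$, so $\Psi^l(T)\cap\Psi^l(T')=\emptyset=\Psi^l(T)\cap\Psi^l(T'')$ for all $l\in\Z$. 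Now suppose there were $v_0$ and $\epsilon>0$ with $T'-v\nsim_s T''-v$ for all $v\in B_{\epsilon}(v_0)$, and choose $l_j\to\infty$ with $\Psi^{l_j}(T-v_0)\to S$, $\Psi^{l_j}(T'-v_0)\to S'$, $\Psi^{l_j}(T''-v_0)\to S''$. By (3) of Theorem \ref{tools} the failure of stable equivalence on the whole ball $B_{\epsilon}(v_0)$, blown up by $\Lambda^{l_j}$, forces $S'\cap S''=\emptyset$, while the preserved disjointness gives $S\cap S'=\emptyset=S\cap S''$. Three pairwise disjoint tilings in one fiber contradict $cr(\Psi)=2$ by (2) of Theorem \ref{tools}. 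Hence no such $v_0,\epsilon$ exist, $T'\approx_s T''$, and triviality of $\approx_s$ gives $T'=T''$. Note that this single argument subsumes your first case ($T'\cap T''=\emptyset$) and requires no case split and no canonical pair $\{T,T^*\}$.

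A secondary problem is your reduction to the border-forcing case. The lemma is stated for arbitrary Pisot family $\Psi$ with $\approx_s$ trivial, and the paper's proof uses no border forcing. The reduction via collaring is not innocuous for this particular statement, because the condition $T\cap T'=\emptyset$ does not transfer in both directions under the collaring isomorphism: two collared tilings can be tile-disjoint while the underlying uncollared tilings share a tile carrying different collars, so a disjoint partner for the collared substitution need not project to one for $\Psi$. Your existence construction is also more elaborate than necessary (and leaves unaddressed why the limit $T'$ of the $T_n'$ is distinct from $T$, which requires the uniform $\delta$-separation of non-coinciding fiber elements): the paper obtains existence directly from (2) of Theorem \ref{tools} together with $cr(\Psi)=2$, with no limiting procedure and no border forcing.
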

\begin{proof} From (4) of Theorem \ref{tools} there are, up to translation, only finitely many 
pairs of patches $(B_0[T],B_0[T'])$ with $T$ and $T'$ regionally proximal. It follows that there is $k\in\N$ so that if $T$ and $T'$ are regionally proximal and $\Psi^l(T)\cap\Psi^l(T')\ne\emptyset$ for some $l\in\Z$, then $\Psi^k(T)\cap\Psi^k(T')\ne\emptyset$. From this we see that if $T$ and $T'$ are regionally proximal and $T\cap T'=\emptyset$, then $\Psi^l(T)\cap\Psi^l(T')=\emptyset$ for all $l\in\Z$.

From (2) of Theorem \ref{tools}, and $cr(\Psi)=2$, we have that for each $T$ there is $T'$ with $T$ and $T'$ regionally proximal and $T\cap T'=\emptyset$. Suppose that $T''$ is also regionally proximal with $T$ and $T''\cap T=\emptyset$. From the above, $\Psi^l(T')\cap\Psi^l(T)=\emptyset$ and $\Psi^l(T'')\cap\Psi^l(T)=\emptyset$ for all $l\in\Z$. Suppose that there are $v_0$ and $\epsilon>0$ with $T'-v\nsim_sT''-v$ for all $v\in B_{\epsilon}(v_0)$. Pick $l_k\to\infty$ with $\Psi^{l_k}(T-v_0)\to S$,
$\Psi^{l_k}(T'-v_0)\to S'$, and $\Psi^{l_k}(T''-v_0)\to S''$. Then $S\cap S'=\emptyset$,
$S\cap S''=\emptyset$, $S'\cap S''=\emptyset$, and $S,S',S''$ are all regionally proximal. 
But then, by (2) of Theorem \ref{tools}, $cr(\Psi)\ge3$. So there can be no such $v_0,\epsilon$ and we conclude $T''\approx_s T'$. But $\approx_s$ is trivial on $\Omega_{\Psi}$, so $T''=T'$.      
\end{proof}

Let us call a pair $(T,T')$ as in Lemma \ref{distal twin} an $rpd$ ordered pair (short for {\em regionally proximal distal}). 
Let $(T,T')$ be any $rpd$ ordered pair. Up to translation, there are only finitely many ordered pairs $(\tau,\tau')$ with $\tau\in T,\tau'\in T'$, and $int(spt(\tau)\cap spt(\tau'))\ne\emptyset$. We'll call such a pair an $rpd$ {\em ordered tile pair} and its support is
$spt((\tau,\tau')):=spt(\tau)\cap spt(\tau')$. (We will assume, again for convenience, that the supports of tiles for $\Psi$ are convex balls so the same is true for the $rpd$ tile pairs.) $\Psi$ naturally induces a substitution $\Psi_{op}$ on the $rpd$ ordered tile pairs and it is easy to see that if $\Psi$ forces the border, then so does $\Psi_{op}$. Let
$\Sigma_{op}:\Omega_{\Psi}\to\Omega_{\Psi_{op}}$ be given by $$\Sigma_{op}(T)= \{(\tau,\tau'):(\tau,\tau') \text{ is an } rpd \text{ ordered tile pair }, \,\tau\in T,\, \tau'\in T',\, (T,T') \text{ an $rpd$ ordered pair}\}.$$

\begin{lemma}\label{op} Suppose that $\Psi$ is  an $n$-dimensional Pisot family substitution with $cr(\Psi)=2$ and suppose that $\approx_s$ is trivial on $\Omega_{\Psi}$. Then $\Sigma_{op}:\Omega_{\Psi}\to\Omega_{\Psi_{op}}$ is an isomorphism.
\end{lemma}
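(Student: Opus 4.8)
The plan is to avoid analysing the twin map $T\mapsto T'$ on $\Omega_\Psi$ directly and instead to exhibit $\Sigma_{op}$ as the inverse of the transparently continuous ``forget the second coordinate'' map. First I would record the equivariances. By Lemma \ref{distal twin} every $T$ has a unique rpd partner $T'$; regional proximality and the condition $T\cap T'=\emptyset$ are invariant under translation and, using that $\Psi$ carries $g$-fibres to $g$-fibres together with the uniform coincidence level from the proof of Lemma \ref{distal twin}, also under $\Psi^{\pm 1}$. Hence $(T-v)'=T'-v$ and $(\Psi T)'=\Psi(T')$, so that $\Sigma_{op}(T-v)=\Sigma_{op}(T)-v$ and $\Sigma_{op}(\Psi T)=\Psi_{op}\Sigma_{op}(T)$. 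I would also note that $\Psi_{op}$ is primitive and non-periodic, so $\Omega_{\Psi_{op}}$ is a minimal substitution tiling space: primitivity follows from that of $\Psi$ and repetitivity, and non-periodicity will drop out once the factor map below is available.

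Now define $p_1,p_2\colon\Omega_{\Psi_{op}}\to\Omega_\Psi$ by sending a tiling by ordered tile pairs to the tiling of its first, respectively second, coordinates. These maps are local, hence continuous and equivariant, and they land in $\Omega_\Psi$ because every patch of an element of $\Omega_{\Psi_{op}}$ occurs in some $\Psi_{op}^m(\tau,\tau')$, whose coordinate projections are legal $\Psi$-patches. The heart of the proof is to show that for every $\bar U\in\Omega_{\Psi_{op}}$ the pair $(U_1,U_2):=(p_1\bar U,p_2\bar U)$ is an rpd ordered pair. For regional proximality I would approximate $\bar U$ on large balls by a translate of some $\Sigma_{op}(\Psi^m T)$; then $U_1,U_2$ agree on $B_r[\,\cdot\,]$ with translates of $\Psi^m T$ and of its twin $\Psi^m(T')$, which are strongly regionally proximal, so condition (6) of Theorem \ref{tools} gives $g(U_1)=g(U_2)$. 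For disjointness I would use that no ordered tile pair $(\tau,\tau')$ occurring in $\Omega_{\Psi_{op}}$ can have $\tau=\tau'$: this holds in every $\Sigma_{op}(\Psi^m T)$ because a genuine rpd pair $(\Psi^m T,\Psi^m(T'))$ shares no tile, and it is a purely local condition, hence preserved under limits taken within $\Omega_{\Psi_{op}}$. Since $U_1\cap U_2\neq\emptyset$ would force a pair with equal coordinates, we get $U_1\cap U_2=\emptyset$, so $(U_1,U_2)$ is rpd and $U_2$ is the twin of $U_1$ by uniqueness.

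The conclusion is then immediate. The map $p_1$ is injective, for if $p_1\bar U=p_1\bar V=T$ then $(T,U_2)$ and $(T,V_2)$ are rpd ordered pairs, whence $U_2=V_2$ by uniqueness of the twin, and a tiling by ordered pairs is determined by its two coordinate tilings, so $\bar U=\bar V$. It is onto by minimality of the $\R^n$-action on $\Omega_\Psi$, its image being a nonempty closed invariant set. Thus $p_1$ is a continuous equivariant bijection between compact metric spaces and therefore a homeomorphism conjugating both the $\R^n$- and the substitution actions; since $p_1\circ\Sigma_{op}=\mathrm{id}_{\Omega_\Psi}$ by construction, $\Sigma_{op}=p_1^{-1}$ is an isomorphism.

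I expect the middle step to be the main obstacle. The point requiring care is that disjointness of twins is \emph{not} a closed relation on $\Omega_\Psi\times\Omega_\Psi$: independent arbitrarily small translations of the two coordinates can create a shared tile in a limit, so the naive assertion that rpd pairs form a closed set — and hence that the twin map is continuous on $\Omega_\Psi$ — fails. Working inside $\Omega_{\Psi_{op}}$ circumvents exactly this, because there the two coordinate tilings are carried by a single tiling and so by a single translation, which turns disjointness into the closed local condition ``no occurring tile pair has equal coordinates''.
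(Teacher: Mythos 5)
Your proof is correct, but it is organized quite differently from the paper's. The paper's proof is a two-line assertion: it writes $\Sigma_{op}$ as the composite of $T\mapsto(T,T')$ (onto the set of rpd ordered pairs in $\Omega_{\Psi}\times\Omega_{\Psi}$) with the patch-level identification of an rpd ordered pair with an element of $\Omega_{\Psi_{op}}$, and declares both to be isomorphisms; the continuity of the twin map $T\mapsto T'$ is left implicit. You instead build the inverse: you verify the equivariances, show that the coordinate projections $p_1,p_2:\Omega_{\Psi_{op}}\to\Omega_{\Psi}$ are continuous and that for \emph{every} $\bar U\in\Omega_{\Psi_{op}}$ the pair $(p_1\bar U,p_2\bar U)$ is rpd (regional proximality via (6) of Theorem \ref{tools} applied to local approximants $\Sigma_{op}(\Psi^mT)$, disjointness because ``no occurring pair has equal coordinates'' is a local, limit-stable condition), and then get injectivity from uniqueness of twins and surjectivity from minimality. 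This buys you a proof in which the only continuity you ever need is that of a local, patch-defined map, and it supplies exactly the detail the paper omits; the cost is the extra bookkeeping of working in $\Omega_{\Psi_{op}}$ before you know it is homeomorphic to $\Omega_{\Psi}$.

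One remark on your closing paragraph: the obstruction you cite as the reason for avoiding the direct route is not actually there. The set of rpd ordered pairs \emph{is} closed in $\Omega_{\Psi}\times\Omega_{\Psi}$: if $(T_n,T_n')\to(T,T')$ with $T_n-\epsilon_n$ and $T_n'-\epsilon_n'$ matching $T$, $T'$ on large balls and $T\cap T'$ contained a tile, then $T_n$ and $T_n'$ would contain same-type tiles offset by $\epsilon_n-\epsilon_n'\to0$; since $g(T_n)=g(T_n')$, such offsets lie in the uniformly discrete set $\Xi-\Xi$ (exactly the Meyer-set argument of Lemma \ref{rpV}), forcing $\epsilon_n=\epsilon_n'$ eventually and hence a shared tile of $T_n$ and $T_n'$, a contradiction. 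So the direct argument the paper gestures at can be completed. This does not affect the validity of your proof, only the claim that the detour was necessary.
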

\begin{proof} $\Sigma_{op}$ is the composition of isomorphisms $T\mapsto (T,T')$,
from $\Omega_{\Psi}$ to $\{(T,T')\in\Omega_{\Psi}\times\Omega_{\Psi}: (T,T') \text{ is an $rpd$ ordered pair}\}$, with $(T,T')\mapsto\{(\tau,\tau'):\tau\in T, \tau'\in T', int(spt(\tau)\cap spt(\tau'))\ne\emptyset\}$ from $\{(T,T')\in\Omega_{\Psi}\times\Omega_{\Psi}: (T,T') \text{ is an $rpd$ ordered pair}\}$ to $\Omega_{\Psi_{op}}$.
\end{proof}

We may carry out an analogous construction, replacing $rpd$ ordered pairs $(T,T')$, by 
(unordered) $rpd$ pairs $\{T,T'\}$. The tiles, which we call {\em rpd tile pairs}, are now
pairs $\{\tau,\tau'\}$ such that $(\tau,\tau')$ is an ordered $rpd$ tile pair, with $spt(\{\tau,\tau'\}):=spt(\tau)\cap spt(\tau')$. Let $\Psi_p$ denote the natural substitution induced on $rpd$ tile pairs by $\Psi$.  Let $\sim_{rpd}$ denote the equivalence relation:
$T\sim_{rpd}T'\Longleftrightarrow T=T'$ or $(T,T')$ is an $rpd$ ordered pair. Also, let $F$ denote the morphism from $rpd$ ordered tile pairs to $rpd$ tile pairs that forgets order:
$F((\tau,\tau')):=\{\tau,\tau'\}$. Then $F$ induces a map, also denoted by $F$, from $\Omega_{\Psi_{op}}$ onto $\Omega_{\Psi_p}$. Let $\Sigma_p:=F\circ\Sigma_{op}:\Omega_{\Psi}\to\Omega_{\Psi_p}$. Finally, let $\gamma$ denote the involution $\gamma((\tau,\tau')):=(\tau',\tau)$, and let $\Gamma$ denote the involution of $\Omega_{\Psi_{op}}$  (and, abusing notation, of $\Omega_{\Psi}$) induced by $\gamma$. Note that $\Gamma\circ\Psi_{op}=\Psi_{op}\circ\Gamma$ (and $\Gamma\circ\Psi=\Psi\circ\Gamma$).

\begin{lemma}\label{rpd p space} Suppose that $\Psi$ is  an $n$-dimensional Pisot family substitution that forces the border with $cr(\Psi)=2$ and suppose that $\approx_s$ is trivial on $\Omega_{\Psi}$. Then $\Psi_p$ is non-periodic, forces the border, and $\Omega_{\Psi_p}$ is isomorphic with $\Omega_{\Psi}/\sim_{rpd}$.
\end{lemma}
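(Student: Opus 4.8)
The plan is to deduce all three assertions from the isomorphism $\Sigma_{op}\colon\Omega_{\Psi}\to\Omega_{\Psi_{op}}$ of Lemma~\ref{op} together with the order-forgetting factor $F$ and the involution $\Gamma$. Under $\Sigma_{op}$ the involution $\Gamma$ becomes the twin-swap $T\mapsto T'$; since an $rpd$ pair has $T\cap T'=\emptyset$ and hence $T\ne T'$, $\Gamma$ is fixed-point free, and it commutes with the $\R^n$-action and with $\Psi_{op}$ (as recorded before the lemma). Moreover $F\circ\Gamma=F$, because forgetting order annihilates the swap, and $\Sigma_p=F\circ\Sigma_{op}$ intertwines translation and substitution with those on $\Omega_{\Psi_p}$ (using $F\circ\Psi_{op}=\Psi_p\circ F$). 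Thus the entire content of the lemma is the single statement that $\Omega_{\Psi_p}$ is the quotient $\Omega_{\Psi_{op}}/\Gamma$, equivalently that the fibres of $\Sigma_p$ are exactly the $\sim_{rpd}$-classes; I would organize the proof around establishing this.

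First I would dispose of the border and periodicity claims by lifting. We already know $\Psi_{op}$ forces the border. Given $W,W'\in\Omega_{\Psi_p}$ that share a pair-tile at $0$, lift them through $F$ to $X,X'\in\Omega_{\Psi_{op}}$ (surjectivity of $F$ is treated below); after replacing $X'$ by $\Gamma X'$ if necessary we may assume $X$ and $X'$ carry the same ordered pair-tile at $0$. Border forcing for $\Psi_{op}$ gives $B_0[\Psi_{op}^m X]=B_0[\Psi_{op}^m X']$ for a uniform $m$, and applying $F$ yields $B_0[\Psi_p^m W]=B_0[\Psi_p^m W']$; hence $\Psi_p$ forces the border. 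For non-periodicity, suppose $W-p=W$ with $p\ne 0$ and lift $W$ to $X$; then $F(X-p)=F(X)$, so $X-p\in\{X,\Gamma X\}$. If $X-p=X$ then $p$ is a period of $X$; if $X-p=\Gamma X$ then, since $\Gamma$ commutes with translation and is an involution, $X-2p=\Gamma(X-p)=X$, so $2p\ne0$ is a period. Either outcome contradicts non-periodicity of $\Omega_{\Psi_{op}}\cong\Omega_{\Psi}$.

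Next I would assemble the isomorphism. Surjectivity of $\Sigma_p=F\circ\Sigma_{op}$ follows because its image is a nonempty closed $\R^n$- and $\Psi_p$-invariant set, hence all of $\Omega_{\Psi_p}$ by minimality (primitivity of $\Psi_p$ is inherited from $\Psi$); alternatively one lifts the balls $B_r[\,\cdot\,]$ of a prescribed element and passes to a limit. Since $\Sigma_{op}(T')=\Gamma\,\Sigma_{op}(T)$ and $F\circ\Gamma=F$, the map $\Sigma_p$ is constant on $\sim_{rpd}$-classes and so descends to a continuous surjection $\bar\Sigma_p\colon\Omega_{\Psi}/\!\sim_{rpd}\;\to\;\Omega_{\Psi_p}$ that conjugates both the translation and the substitution dynamics. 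The remaining point is injectivity of $\bar\Sigma_p$, i.e.\ that the fibres of $\Sigma_p$ are precisely the $\sim_{rpd}$-classes; granting this, $\Omega_{\Psi}/\!\sim_{rpd}$ is the quotient of a compact space by the fibres of the closed map $\Sigma_p$, hence compact Hausdorff (in particular $\sim_{rpd}$ is automatically closed), and a continuous bijection of compact Hausdorff spaces is a homeomorphism.

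The main obstacle is exactly this fibre computation: $\Sigma_p(R)=\Sigma_p(S)$ should force $R\in\{S,S'\}$. From $\Sigma_p(R)=\Sigma_p(S)$ one reads off, at every point $x$, the equality of unordered placed-tile pairs $\{R(x),R'(x)\}=\{S(x),S'(x)\}$; in particular each tile of $R$ coincides with a placed tile of $S$ or of $S'$, so there is an orientation function $\epsilon(x)\in\{+,-\}$ recording whether $R(x)=S(x)$ or $R(x)=S'(x)$. This $\epsilon$ is locally constant off the $(n-1)$-skeleton of the common refinement, and across any refinement face that belongs to only one of $S,S'$ the two adjacent pair-tiles share a component, which pins $\epsilon$ to be constant across that face. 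Hence $\epsilon$ can only jump across a face that is simultaneously a face of $S$ and of $S'$, i.e.\ across a coincidence of the two tile boundaries. Ruling out such a global sign change---equivalently, ruling out a ``mixing'' tiling that follows $S$ on one region and $S'$ on another---is where the real work lies. I expect to derive the contradiction from the rigidity of $rpd$ pairs: a coincident $(n-1)$-face, pushed forward by enough powers of $\Psi$ and using that $\Psi$ forces its border, propagates into a genuinely shared tile of $\Psi^k(S)$ and $\Psi^k(S')$, contradicting $\Psi^k(S)\cap\Psi^k(S')=\emptyset$ for all $k$ (established in the proof of Lemma~\ref{distal twin}). With the sign change excluded, $\epsilon$ is globally constant, giving $R=S$, $R'=S'$ or $R=S'$, $R'=S$, so $R\sim_{rpd}S$ and $\bar\Sigma_p$ is injective. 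The essential use of $cr(\Psi)=2$ and of border forcing is concentrated in this last step.
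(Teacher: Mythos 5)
Your overall architecture is sound and close in spirit to the paper's, but the one step you yourself flag as ``where the real work lies'' is stated in a form that is false, and the fix matters. You claim that a coincident $(n-1)$-face of $S$ and $S'$, pushed forward under $\Psi^k$ with border forcing, propagates into a genuinely shared tile of $\Psi^k(S)$ and $\Psi^k(S')$. Border forcing takes a shared \emph{tile} as input, not a shared face, and $rpd$ twins can perfectly well share every tile boundary while never sharing a tile: for the collared Thue--Morse substitution (Example \ref{TM}) the $rpd$ twin $S'$ is a relabelling of $S$, so \emph{every} face is coincident, yet $\Psi^k(S)\cap\Psi^k(S')=\emptyset$ for all $k$. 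So a coincident face alone yields no contradiction. The contradiction must be routed through the mixing tiling $R$ itself: if $\epsilon$ jumps across a face containing a point $z$, then $R$ shares with $S$ a tile whose support contains $z$ (from one side) and shares with $S'$ a tile whose support contains $z$ (from the other side); applying border forcing to the pair $(R,S)$ and to the pair $(R,S')$ at $z$ gives $B_0[\Psi^m(S-z)]=B_0[\Psi^m(R-z)]=B_0[\Psi^m(S'-z)]$, and \emph{this} contradicts $\Psi^m(S)\cap\Psi^m(S')=\emptyset$. With that repair your fibre computation goes through (the convexity of tiles assumed in the paper is what makes the cell-adjacency graph of the common refinement connected, so that ``no jump across any face'' forces $\epsilon$ to be globally constant).

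A second, organizational, problem: your proofs of border forcing and especially of non-periodicity of $\Psi_p$ invoke the statement that the fibres of $F$ are exactly the $\Gamma$-orbits (you pass from $F(X-p)=F(X)$ to $X-p\in\{X,\Gamma X\}$), which is precisely the fibre computation you establish only afterwards; the order must be reversed. Once reversed, your observation that $X-p=\Gamma X$ implies $X-2p=X$ is a clean alternative to the paper's argument, which instead proves non-periodicity directly and first: it locates two adjacent ordered tile pairs in $\Sigma_{op}(T)$ whose order relation flips under the putative period $v$ and uses border forcing of $\Psi_{op}$ to force $(\alpha,\alpha')=(\alpha',\alpha)$ for some ordered tile pair, violating distality. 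The paper then exploits non-periodicity (hence invertibility of $\Psi_p$) in its fibre computation, propagating a single disagreement between $T$ and $T'$ backward under $\Psi_p^{-k}$ and forward again with border forcing until the disagreement is global; your $\epsilon$-function argument localizes the same content at an interface instead, which is a legitimate variant once the interface step is argued as above.
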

\begin{proof} It is clear that $\Psi_p$ forces the border. Suppose that there is $0\ne v\in\R^n$ and $T\in\Omega_{\Psi}$ with $\Sigma_{op}(\bar{T})-v=\Sigma_{op}(\bar{T})$. 
Since $\Omega_{\Psi_{op}}$ has no non-zero periods (being isomorphic with $\Omega_{\Psi}$), there are adjacent $(\tau,\tau'),(\sigma,\sigma')\in\bar{T}$, say $x\in
spt((\tau,\tau'))\cap spt((\sigma,\sigma'))$, such that $(\tau,\tau')-v,(\sigma',\sigma)-v\in\bar{T}$. Since $\Psi_{op}$ forces the border, there are $m\in\N$, $(\alpha,\alpha')\in\Phi_{\Psi_{op}}^m(\bar{T})$, so that the $rpd$ ordered tile pairs meeting $\Psi_{op}^m((\tau,\tau'))$ and $\Psi_{op}^m((\tau,\tau')-v)$, at $\Lambda^m x$, resp., $\Lambda^m(x-v)$, and contained in $\Psi_{op}^m((\sigma,\sigma'))$, resp., $\Psi_{op}^m((\sigma,\sigma')-v)$, are $(\alpha,\alpha')$ and $(\alpha,\alpha')-\Lambda^m v$, resp.. But $\Psi_{op}^m((\sigma',\sigma))=\Gamma(\Psi_{op}^m(\sigma,\sigma'))$. This means that $(\alpha',\alpha)=(\alpha,\alpha')$ so that $\alpha'=\alpha$. This violates the `distal' part of being an $rpd$ ordered pair. Thus $\Psi_p$ is non-periodic and, by \cite{Sol}, $\Psi_p:\Omega_{\Psi_p}\to\Omega_{\Psi_p}$ is a homeomorphism.

If $T\sim_{rpd}T'$, then certainly $\Sigma_p(T)=\Sigma_p(T')$. Conversely, if $\Sigma_p(T)=\Sigma_p(T')$, then $\Sigma_p(\Psi^{-k}(T))=\Psi_p^{-k}(\Sigma_p(T))=\Psi_p^{-k}(\Sigma_p(T'))=\Sigma_p(\Psi^{-k}(T'))$ for all $k\in\N$. Suppose that $T\ne T'$ and choose $\{\tau,\tau'\}\in \Sigma_p(T)=\Sigma_p(T')$ with $\tau\ne\tau'$ and, say, $x\in int(spt(\{\tau,\tau'\}))$. Then, if $\{\sigma,\sigma'\}\in\Psi_p^{-k}(\Sigma_p(T))=\Psi_p^{-k}(\Sigma_p(T'))$ is such that $\Lambda^{-k} x\in int(spt(\{\sigma,\sigma'\}))$, 
$\sigma\ne\sigma'$. Hence all pairs $\{\eta,\eta'\}$ in $\Psi_p^k(\{\sigma, \sigma'\})$ satisfy $\eta\ne\eta'$. Since $k\in\N$ is arbitrary, and $\Psi_p$ forces the border, $\eta\ne\eta'$ for all $\{\eta,\eta'\}\in \Sigma_p(T)=\Sigma_p(T')$. That is, $(T,T')$ is an $rpd$ pair.
Thus $\Sigma_p(T)=\Sigma_p(T')\Leftrightarrow T\sim_{rpd}T'$, and $\Omega_{\Psi_p}\simeq\Omega_{\Psi}/\sim_{rpd}$.
\end{proof}

As the following example shows, it is important in the construction of $\Psi_p$ that the substitution $\Psi$ forces the border.

\begin{example}\label{TM} Consider the Thue-Morse substitution $\psi:a\mapsto ab,b\mapsto ba$.
(The corresponding tile sustitution $\Psi$ does not force the border.) The relation $\approx_s$ is trivial on $\Omega_{\Psi}$,
and $cr(\Psi)=2$. If $T\in\Omega_{\Psi}$, then the $rpd$ twin $T'$ of $T$ is obtained by switching the labels of all tiles in $T$. If we form $\Psi_p$ as above we obtain only one prototile and $\Psi_p$ degenerates to the tile substitution corresponding to a periodic substitution $a\mapsto aa$. 

If instead we first collar $\Psi$, so that the resulting substitution $\Phi$ forces the border, the $\Phi_p$ obtained is (a version of) the tile substitution corresponding to {\em period-doubling}, $a\mapsto ab,b\mapsto aa$,  and the space $\Omega_{\Phi_p}$ is isomorphic with 
$\Omega_{\Psi}/\sim_{rpd}$ (a 2-adic solenoid with exactly one arc-component split into a bi-asymptotic pair).

\end{example}

\begin{Theorem}\label{rpd p space} Suppose that $\Psi$ is  an $n$-dimensional Pisot family substitution that forces the border with $cr(\Psi)=2$ and suppose that $\approx_s$ is trivial on $\Omega_{\Psi}$. Then $\Sigma_p:\Omega_{\Psi}\to\Omega_{\Psi_p}$ is a 2-to-1 covering map that semi-conjugates the dynamics,
$\Omega_{\Psi_p}$ is a Pisot family substitution tiling space whose $\R^n$-action has pure discrete spectrum, and the maximal equicontinuous factor map on $\Omega_{\Psi}$ factors, via $\Sigma_p$, through $\Omega_{\Psi_p}$. 
\end{Theorem}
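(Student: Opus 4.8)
The plan is to read off all three conclusions from the factorization $\Sigma_p=F\circ\Sigma_{op}$, in which $\Sigma_{op}$ is already known to be an isomorphism (Lemma \ref{op}) and $F$ is the order-forgetting map, which is the quotient of $\Omega_{\Psi_{op}}$ by the involution $\Gamma$. First I would check that $\Gamma$ acts freely: a fixed point of $\Gamma$ would be an element of $\Omega_{\Psi_{op}}$ invariant under interchanging the two entries of every ordered $rpd$ tile pair, forcing $\tau=\tau'$ on an overlap and contradicting the `distal' condition $T\cap T'=\emptyset$ built into Lemma \ref{distal twin}. A free involution of a compact metric space is the deck transformation of a $2$-to-$1$ covering projection onto its quotient, so $F$, and hence $\Sigma_p$, is a $2$-to-$1$ covering map; that it semi-conjugates both translation and substitution is immediate from $\Gamma\circ\Psi_{op}=\Psi_{op}\circ\Gamma$ together with the corresponding intertwining identities for $\Sigma_{op}$. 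Concretely, the fibers of $\Sigma_p$ are exactly the pairs $\{T,T'\}$ with $T'$ the $rpd$ twin of $T$ (using that $\Sigma_p(T)=\Sigma_p(T')\iff T\sim_{rpd}T'$, established earlier, and $T\ne T'$ since $T\cap T'=\emptyset$), and $\Gamma$ interchanges the two points of each fiber.

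Next I would factor the maximal equicontinuous factor map $g$ of $\Omega_{\Psi}$ through $\Sigma_p$. If $T\sim_{rpd}T'$ then either $T=T'$ or $(T,T')$ is regionally proximal, so in all cases $g(T)=g(T')$; hence $g$ is constant on fibers of $\Sigma_p$ and descends to a continuous $g_p\colon\Omega_{\Psi_p}\to\T_{max}$ with $g=g_p\circ\Sigma_p$, which is conclusion (3). Since $g_p$ factors the $\R^n$-action on the factor $\Omega_{\Psi_p}$ onto the equicontinuous action on $\T_{max}$, and since any equicontinuous factor of $\Omega_{\Psi_p}$ pulls back along the surjection $\Sigma_p$ to an equicontinuous factor of $\Omega_{\Psi}$ and therefore through $g$, the standard maximality comparison shows $g_p$ is, up to isomorphism of the image group, the maximal equicontinuous factor map of $\Omega_{\Psi_p}$.

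The crux is then to show $g_p$ is a.e. one-to-one. I would set $G=\{z\in\T_{max}:\sharp g^{-1}(z)=cr(\Psi)=2\}$, a set of full Haar measure by (1) of Theorem \ref{tools}. For $z\in G$ write $g^{-1}(z)=\{T_1,T_2\}$; by (2) of Theorem \ref{tools} these two tilings satisfy $\Psi^k(T_1)\cap\Psi^k(T_2)=\emptyset$ for all $k$, and in particular $T_1\cap T_2=\emptyset$, so $(T_1,T_2)$ is an $rpd$ ordered pair and $\Sigma_p(T_1)=\Sigma_p(T_2)$. Any $\bar S\in g_p^{-1}(z)$ lifts to some $S\in g^{-1}(z)=\{T_1,T_2\}$ (by surjectivity of $\Sigma_p$ and $g=g_p\circ\Sigma_p$), whence $\bar S=\Sigma_p(T_1)$; thus $g_p^{-1}(z)$ is a single point for every $z\in G$. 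Therefore $g_p$ is a.e. one-to-one, and by (5) of Theorem \ref{tools} the $\R^n$-action on $\Omega_{\Psi_p}$ has pure discrete spectrum, giving conclusion (2); here $\Psi_p$ is Pisot family because it carries the same expansion $\Lambda$ as $\Psi$, and it is non-periodic and forces the border by the preceding lemma, is FLC since there are finitely many $rpd$ tile pairs up to translation, and is primitive since $\Omega_{\Psi_p}$, being a continuous factor of the minimal system $\Omega_{\Psi}$, is minimal. I expect the main obstacle to be precisely this identification of the generic two-point fibers of $g$ with $rpd$ pairs, that is, matching the abstract coincidence-rank-$2$ structure of Theorem \ref{tools}(2) to the concrete twin of Lemma \ref{distal twin}, since it is exactly this matching that makes $\Sigma_p$ collapse only the nontrivial fibers of $g$ and thereby forces pure discreteness on the quotient.
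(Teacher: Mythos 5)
Your proof is correct and follows essentially the same route as the paper: both realize $\Sigma_p$ as the quotient of $\Omega_{\Psi_{op}}\simeq\Omega_{\Psi}$ by the free involution $\Gamma$, factor $g$ through the quotient using regional proximality of $rpd$ pairs, and deduce pure discreteness by comparing the a.e.\ $2$-to-$1$ map $g$ with the exactly $2$-to-$1$ map $\Sigma_p$. The only differences are cosmetic: where you identify the generic fiber $g^{-1}(z)$ with an $rpd$ pair via (2) of Theorem \ref{tools} and conclude that $g_p^{-1}(z)$ is a singleton directly, the paper gets a.e.\ injectivity of the induced map by pure counting, and where you invoke regional proximality of $rpd$ pairs by definition, the paper verifies strong regional proximality of the corresponding elements of $\Omega_{\Psi_{op}}$ by hand via (6) of Theorem \ref{tools}.
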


\begin{proof}
Identifying $\Omega_{\Psi}$ with $\{(T,T')\in\Omega_{\Psi}\times\Omega_{\Psi}:T\ne T'\text{ and }T\sim_{rpd}T'\}$ (via Lemma \ref{op}) and $\Omega_{\Psi_p}$ with $\{\{T,T'\}\subset\Omega_{\Psi}:T\ne T'\text{ and }T\sim_{rpd}T'\}$, the map $\Sigma_p$ is simply $(T,T')\mapsto\{T,T'\}$
which clearly semi-conjugates the dynamics and is everywhere exactly 2-to-1. Given an $rpd$ ordered pair $(T,T'), T\ne T'$ there is $\delta>0$ so that $d(T-v,T'-v)\ge\delta$ for all $v\in\R^n$. It follows from minimality of the $\R^n$-action on $\Omega_{\Psi}$ that, if $(S,S'), S\ne S'$ is any $rpd$ ordered pair, then $d(S,S')\ge\delta$, and from this, that small neighborhoods in $\Omega_{\Psi_p}$ are evenly covered; that is, $\Sigma_p$ is a 2-to-1 covering map.

If $(T,T'), T\ne T'$ is an $rpd$ ordered pair and $r>0$, there are $S_1(r),S_2(r)\in\Omega_{\Psi}$ and $v_r$ so that $B_r[T]=B_r[S_1(r)]$, $B_r[T']=B_r[S_2(r)]$ and $B_r[S_1(r)-v_r]=B_r[S_2(r)-v_r]$
((6) of Theorem \ref{tools}). Let $(S_1(r),S'_1(r)), S_1(r)\ne S'_1(r)$ and $(S_2(r),S'_2(r)), S_2(r)\ne S'_2(r)$ be $rpd$ ordered pairs. By uniqueness of $rpd$ partners, $S'_1(r)\to T'$ as $r\to\infty$. Thus, given $R>0$, $B_R[T']=B_R[S_1'(r)]$ for all sufficiently large $r$. Similarly, $B_R[T]=B_R[S'_2(r)]$ and $B_R[S'_1(r)-v_r]=B_R[S'_2(r)]$ for large $R$. Let $\bar{T}$, $\bar{T}'$, $\bar{S}$, and $\bar{S}'$ denote the elements of $\Omega_{\Psi_{op}}$ corresponding to $(T,T')$, $(T',T)$, $(S_1(r),S'_1(r))$, and $(S_2(r),S'_2(r))$, resp. Then
$B_R[\bar{T}]=B_R[\bar{S}]$, $B_R[\bar{T'}]=B_R[\bar{S'}]$, and $B_R[\bar{S}-v_r]=B_R[\bar{S}'-v_r]$, and we see that $g(\bar{T})=g(\bar{T}')$, where $g$ is the maximal equicontinuous factor map on $\Omega_{\Psi}$ (again, by (6) of Theorem \ref{tools}). It follows that there is a semi-conjugacy $h$ so that $g=h\circ\Sigma_p$. Since $g$ is a.e 2-to-1 and $\Sigma_p$ is exactly 2-to-1, $h$ is a.e. 1-1. Then $h$ is the maximal equicontinuous factor map on $\Omega_{\Psi_p}$ and the $\R^n$-action on $\Omega_{\Psi_p}$ has pure discrete spectrum, by (5) of Theorem \ref{tools}.
\end{proof}

 In \cite{BGG} the authors introduce the idea of a maximal pure discrete factor; that is, a factor on which the action is pure discrete which is not properly contained in any other such factor. Two examples of this situation given there are the period doubling factor of Thue-Morse (Example \ref{TM} above) and a pure discrete factor of the
2-dimensional squiral tiling space. Both are instances of Theorem \ref{rpd p space}.
In fact, in the setting of Theorem \ref{rpd p space}, there is, up to isomorphism (of $\R^n$-actions), a unique maximal factor strictly between $\Omega_{\Psi}$ and $\T_{max}$ and the construction of $\Omega_{\Psi_p}$ gives an algorithm for finding it.

\begin{prop}\label{max pure discrete} Suppose that $\Psi$ is  an $n$-dimensional Pisot family substitution that forces the border with $cr(\Psi)=2$ and suppose that $\approx_s$ is trivial on $\Omega_{\Psi}$. Then any pure discrete factor of $\Omega_{\Psi}$ that has $\T_{max}$ as a factor is a factor of $\Omega_{\Psi_p}$.
\end{prop}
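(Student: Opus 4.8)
The plan is to show that the factor map $p:\Omega_{\Psi}\to Z$ is invariant under the deck involution $\Gamma$ of the double cover $\Sigma_p$, and then to read off the desired factorization from the identification $\Omega_{\Psi_p}\simeq\Omega_{\Psi}/\sim_{rpd}$ established in Lemma \ref{rpd p space}. Here $q:Z\to\T_{max}$ denotes the given map of $Z$ onto $\T_{max}$, so that $q\circ p=g$. Recall that the fibers of $\Sigma_p$ are exactly the pairs $\{T,\Gamma(T)\}$ as $(T,\Gamma(T))$ runs over the $rpd$ ordered pairs; hence $p$ descends through $\Sigma_p$ precisely when $p\circ\Gamma=p$.

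First I would pin down that $q$ is a.e.\ one-to-one, i.e.\ that $\T_{max}$ is really the maximal equicontinuous factor of $Z$. Since $q\circ p=g$ and $g$ is the maximal equicontinuous factor map of $\Omega_{\Psi}$, the maximality property yields factor maps $h':\T_{max}\to Z_{max}$ and $q':Z_{max}\to\T_{max}$ with $g_Z\circ p=h'\circ g$ and $q=q'\circ g_Z$, where $g_Z:Z\to Z_{max}$ is the maximal equicontinuous factor map of $Z$. Substituting into $q\circ p=g$ and using that $g$ is onto forces $q'\circ h'=\mathrm{id}$, so $h'$ and $q'$ are isomorphisms and $q$ may be identified with $g_Z$. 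As $Z$ has pure discrete spectrum, $q$ is then a.e.\ one-to-one ((5) of Theorem \ref{tools}); since $q$ pushes the invariant measure of $Z$ to Haar measure on $\T_{max}$, the set $Z_0:=\{z\in Z:\#q^{-1}(q(z))=1\}$ has full measure for the unique invariant measure $\nu$ on $Z$.

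Next I would establish $p\circ\Gamma=p$ almost everywhere. Fix an $rpd$ ordered pair $(T,T')=(T,\Gamma(T))$. As $T$ and $T'$ are regionally proximal, $g(T)=g(T')$, whence $q(p(T))=q(p(T'))$; that is, $p(T)$ and $p(T')$ lie in a common $q$-fiber. If moreover $p(T)\in Z_0$, that fiber is the singleton $\{p(T)\}$, forcing $p(T')=p(T)$. Because $\Omega_{\Psi}$ is uniquely ergodic with measure $\mu$, the factor $Z$ is uniquely ergodic with $\nu=p_{*}\mu$, so $p^{-1}(Z_0)$ has full $\mu$-measure, and therefore $p=p\circ\Gamma$ on a set of full $\mu$-measure.

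Finally, the equalizer $\{T:p(T)=p(\Gamma(T))\}$ is closed (both maps being continuous) and of full measure; since the minimality of the $\R^n$-action forces $\mu$ to have full support, this closed set is all of $\Omega_{\Psi}$, so $p\circ\Gamma=p$ everywhere. Consequently $p$ is constant on the fibers of $\Sigma_p$ and descends to a continuous $r:\Omega_{\Psi_p}\to Z$ with $p=r\circ\Sigma_p$; as $\Sigma_p$ and $p$ are surjective and semi-conjugate the $\R^n$-actions, $r$ is a factor map, exhibiting $Z$ as a factor of $\Omega_{\Psi_p}$. I expect the two passages to ``everywhere'' to be the delicate points: identifying $\T_{max}$ as the maximal equicontinuous factor of $Z$ (so that pure discreteness genuinely yields $q$ a.e.\ one-to-one), and upgrading $p=p\circ\Gamma$ from a.e.\ to everywhere via full support of $\mu$; the rest is bookkeeping with the covering $\Sigma_p$.
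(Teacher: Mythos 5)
Your overall strategy is sound and ends up close in spirit to the paper's: the paper also reduces the proposition to showing that the factor map out of $\Omega_{\Psi}$ cannot separate the two members of an $rpd$ pair (arguing by contradiction that if it separated one such pair it would, by minimality and distality, separate them all, forcing the action on the intermediate factor to be at least $2$-to-$1$ over $\T_{max}$ and hence not pure discrete). Your identification of $q$ with $g_Z$ via maximality, the a.e.-to-everywhere upgrade via the closed $\Gamma$-equalizer, and the descent through $\Sigma_p$ are all fine. But there is one genuine gap: the step ``as $Z$ has pure discrete spectrum, $q$ is a.e.\ one-to-one ((5) of Theorem \ref{tools})''. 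Part (5) of Theorem \ref{tools} is a statement about a Pisot family substitution tiling space $\OP$ and \emph{its} maximal equicontinuous factor map; $Z$ is an arbitrary topological factor and need not be a substitution tiling space, so the citation does not apply. Worse, the general principle you are implicitly invoking --- that a minimal, uniquely ergodic system with pure discrete spectrum has an a.e.\ one-to-one maximal equicontinuous factor map --- is false (non-regular Toeplitz systems with pure point spectrum are counterexamples: some eigenfunctions fail to be continuous, so the topological maximal equicontinuous factor is too small). Everything downstream of this step depends on it, so as written the proof is incomplete.

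The gap is local and can be closed using the sandwiching $q\circ p=g$ with $g$ a.e.\ exactly $2$-to-$1$. Since $p$ is onto, $q^{-1}(z)=p(g^{-1}(z))$, so $\#q^{-1}(z)\in\{1,2\}$ for a.e.\ $z$; the set where the fiber is a singleton is a Borel, translation-invariant subset of $\T_{max}$, hence has Haar measure $0$ or $1$. If it had measure $0$, then for a.e.\ $z$ one has $g^{-1}(z)=\{T,\Gamma(T)\}$ and $q^{-1}(z)=\{p(T),p(\Gamma(T))\}$ with $p(T)\ne p(\Gamma(T))$; since $p^{-1}(p(T))\subset g^{-1}(z)$, this forces $p^{-1}(p(T))=\{T\}$, so $p$ is a.e.\ one-to-one, hence a measure isomorphism, and $Z$ would have the same dynamical spectrum as $\Omega_{\Psi}$ --- not pure discrete, because $cr(\Psi)=2$ (here (5) of Theorem \ref{tools} \emph{does} apply, to $\Omega_{\Psi}$). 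This contradiction shows $q$ is a.e.\ one-to-one, and the rest of your argument then goes through verbatim. With that repair your proof is correct, and is in fact a somewhat more explicit rendering of the dichotomy the paper compresses into one sentence.
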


\begin{proof} 

 Suppose that $\pi_1:\Omega_{\Psi}\to\Omega$ and $\pi_2:\Omega\to\T_{max}$ are factor maps of $\R^n$ actions and that the action on $\Omega$ is pure discrete. By Theorem \ref{rpd p space} we may suppose that $\pi_2$ is proper. There must then be some $T,S$ with $g(T)=g(S)$ and $\pi_1(T)\ne\pi_1(S)$. If $T\sim_{rpd}S$
then (by minimality of the $\R^n$-action and distally of $T$ and $S$) $\pi_1$ is at least 2-1 everywhere and the action on $\Omega$ cannot be pure discrete. Thus $T\sim_{rpd}T'\implies \pi_1(T)=\pi_1(T')$, and $\pi_1$ factors through $\Omega_{\Psi}/\sim_{rpd}\simeq \Omega_{\Psi_p}$.
\end{proof}

\section{One-dimensional Pisot Substitutions}

It will be convenient in what follows to make a (one-dimensional) substitution  force its border by making it {\em proper}, rather than by collaring. To do this we need to pass to a power of the substitution, but this causes no problem as the spaces for a substitution and  a proper version of it have isomorphic $\R$-actions. For details, see \cite{Dur} and \cite{BD}. Briefly,
given a substitution $\phi$, take $n$ so that there are letters $a,b$ with $\phi^n(a)=a\ldots$, $\phi^n(b)=\ldots b$, and so that $ba$ is an allowed word for $\phi$. Let $\{w^1,\dots,w^m\}$ be the collection of all words, each of which starts in $a$ and ends in $b$, such that  $bw^ia$ is allowed for $\phi$. $\phi^n$ then induces a substitution on 
the alphabet $\{w^1,\ldots,w^m\}$ and this is what we mean by a proper version of $\phi$.
It has a unique fixed bi-infinite word, the corresponding tile substitution forces the border,
and we may take a wedge of circles, one for each tile, for the Anderson-Putnam complex. Specifically, for proper $\phi$ with alphabet $\mathcal{A}=\{1\ldots,m\}$ and substitution matrix $M$, we let $\Phi$ denote the corresponding tile substitution on prototiles $\rho_i=[0,\omega_i]$, $i=1,\ldots,m$, $(\omega_1,\ldots,\omega_m)$ a positive left eigenvector for $M$. Then $X_{\phi}:=\cup_{i=1}^m(\rho_i,i)/b$, $b$ the {\em branch point} $b:=\{(t,i):t=0 \text{ or }t=\omega_i,i=1,\ldots,m\}$. Let $f_{\phi}$ denote the map induced by $\Phi$ on $X_{\phi}$. 

For one-dimensional substitutions, $\OP$ has no almost automorphic sub action iff $\OP$ does not have pure discrete spectrum. In this setting, the tile substitution $\Phi_s$ corresponds to a substitution $\phi_s$ on letters. If $\phi$ is proper and $\OP$ does not have pure discrete spectrum, $\phi_s$ is also proper. (One can get this from the proof that $\Phi_s$ forces the border in Theorem \ref{minimal model}, or, directly, by considering the definition of $\Phi_s$ on stacks.)
The natural map from $X_{\phi}$ to $X_{\phi_s}$, taking the quotient by $\asymp_s$, is induced by a morphism $\sigma$ on letters and will be denoted by $f_{\sigma}$. Letting
$[b]:=f_{\sigma}(b)$ denote the branch point of $X_{\phi_s}$, properness of $f_{\phi_s}$ means that all the edge germs of $X_{\phi_s}$ at $[b]$ are flattened by $f_{\phi_s}$ onto a single arc through $[b]$.

\begin{prop} \label{injective on H^1} Suppose that $\phi$ is a proper Pisot substitution such that $\OP$ does not have pure discrete spectrum. Then $\Sigma^*:H^1(\Omega_{\Phi_s})\to H^1(\OP)$ is injective.
\end{prop}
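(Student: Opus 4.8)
The plan is to compute both groups from the Anderson--Putnam approximants and to show that $\Sigma$, though possibly non-injective on the cohomology of a single approximant, becomes injective on the ``eventual image'' that actually carries $H^1$. Since $\phi$ and $\phi_s$ are proper, $X_\phi$ and $X_{\phi_s}$ are wedges of $m$ and $m_s$ circles (one per prototile), $\OP\cong\inv f_\phi$ and $\Omega_{\Phi_s}\cong\inv f_{\phi_s}$, and by continuity of \Cech cohomology $H^1(\OP;\Q)\cong\dir(\Q^{m},M^{T})$ and $H^1(\Omega_{\Phi_s};\Q)\cong\dir(\Q^{m_s},M_s^{T})$, where $M,M_s$ are the substitution matrices and $f_\phi^{*},f_{\phi_s}^{*}$ act as their transposes. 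The map $f_\sigma\colon X_\phi\to X_{\phi_s}$ induced by the letter morphism $\sigma$ satisfies $f_\sigma\circ f_\phi=f_{\phi_s}\circ f_\sigma$, so its abelianization $S$ (the $m_s\times m$ matrix recording how many stacks of each type subdivide each prototile) satisfies $SM=M_sS$; hence $M^{T}S^{T}=S^{T}M_s^{T}$ and $\Sigma^{*}=\dir S^{T}$.

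I would then reduce injectivity to a single level. Writing $W=\bigcap_k\operatorname{im}((M^{T})^{k})$ and $W_s=\bigcap_k\operatorname{im}((M_s^{T})^{k})$ for the eventual images, the Fitting decomposition identifies $H^1(\OP;\Q)\cong W$ and $H^1(\Omega_{\Phi_s};\Q)\cong W_s$, with $M^{T},M_s^{T}$ acting invertibly; since $S^{T}$ intertwines these, $S^{T}(W_s)\subseteq W$ and $\Sigma^{*}$ is injective iff $S^{T}|_{W_s}$ is. Under the perfect pairing between the nonzero-eigenvalue parts this is equivalent to surjectivity of $S=f_{\sigma*}$ from $U:=\bigcap_k\operatorname{im}(M^{k})$ onto $U_s:=\bigcap_k\operatorname{im}(M_s^{k})$, and since $S$ carries $\bigcup_k\ker(M^{k})$ into $\bigcup_k\ker(M_s^{k})$ this is in turn equivalent to the single containment $U_s\subseteq\operatorname{im}(S)$.

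To establish $U_s\subseteq\operatorname{im}(S)$ I would first treat the Pisot part. If $r,r_s>0$ are the Perron eigenvectors, $SM=M_sS$ forces $Sr$ to be a $\lambda$-eigenvector of $M_s$, and $Sr>0$ gives $Sr=c\,r_s$ with $c>0$, so $r_s\in\operatorname{im}(S)$; applying the Galois group of $\Q(\lambda)$ to the length identity $S^{T}\ell_s=\ell$ for the prototile length vectors then places the span of the eigenvectors for $\lambda$ and its conjugates inside $\operatorname{im}(S)$. For the remaining directions I would lift cycles geometrically: since $f_\sigma$ semiconjugates $\Phi$ to $\Phi_s$, the stacks of $\Phi_s^{N}(s)$ are exactly those subdividing $\Lambda^{N}e_s$ inside $\Phi^{N}(\tau)$ (with $e_s\subset\tau$ the support of $s$), giving $M_s^{N}[s]=S\,t_N-b_N$, where $t_N$ counts the tiles of $\Phi^{N}(\tau)$ meeting $\Lambda^{N}e_s$ and $b_N$ is the stack content of the two partial tiles protruding past its ends. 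Properness of $\phi_s$ — that $f_{\phi_s}$ flattens all edge germs at the branch point onto a single arc — should force these protruding corrections to contribute nothing to $\Q^{m_s}/\operatorname{im}(S)$, yielding $\operatorname{im}(M_s^{N})\subseteq\operatorname{im}(S)$ and hence $U_s\subseteq\operatorname{im}(S)$.

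The main obstacle is precisely this control of the boundary corrections $b_N$. Because $S$ itself is typically not surjective, injectivity genuinely appears only on the eventual image, so one cannot avoid the partial tiles; they must be shown to lie in $\operatorname{im}(S)$ modulo the generalized kernel of $M_s$, and this is where properness does real work rather than serving as a mere convenience — Example \ref{TM} shows the analogous construction degenerates without border forcing. The a.e.\ one-to-one property of $\Sigma$ from Theorem \ref{minimal model} is the conceptual reason no cohomology should be lost, but converting it into the containment $U_s\subseteq\operatorname{im}(S)$ requires the explicit approximant-level bookkeeping sketched above.
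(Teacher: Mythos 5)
Your reduction is sound and, once dualized, is the same strategy the paper uses: everything comes down to showing that the induced map on the eventual range of the abelianization acting on $H_1$ is onto (the paper shows $(f_{\sigma})_*$ maps the eventual range $E$ of $(f_{\phi})_*$ on $H_1(X_{\phi})$ onto the eventual range $\bar{E}$ of $(f_{\phi_s})_*$ on $H_1(X_{\phi_s})$, then dualizes to get injectivity of $\Sigma^*$). The direct-limit and Fitting bookkeeping, the identification $\Sigma^*=\dir S^{T}$, and the Perron--Galois observation for the $d$-dimensional Pisot block are all fine.

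The gap is exactly where you flag it, and it cannot be closed by properness alone. In the paper, properness is used only to arrange that every $1$-chain is homologous to a combination of elementary chains $c_{a,b}$ with $ab$ an allowed transition, so that lifts $\tilde{c}_{a,b}$ exist as paths inside genuine tilings; it says nothing about your boundary vectors $b_N$. The mechanism that actually kills the boundary discrepancies is the way the stacks are defined through $\approx_s$ (dense stable equivalence): for each stack $\bar{\tau}=\{\tau_1,\ldots,\tau_l\}$ the set of $x$ with $\tau_i-x\sim_s\tau_j-x$ for all $i,j$ is dense in $int(spt(\bar{\tau}))$, and by (3) of Theorem \ref{tools} stable equivalence means $B_0[\Phi^k(\tau_i-x)]=B_0[\Phi^k(\tau_j-x)]$ for large $k$. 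This allows one to choose, in the interior of each petal of $X_{\phi_s}$, a basepoint $x_i^*$ for which $f_{\phi}^{K}(f_{\sigma}^{-1}(x_i^*))$ is a single point; taking the elementary chains $c_{i,j}$ to run between these basepoints, the lifted chains have endpoint mismatches that vanish after applying $(f_{\phi})_*^{K}$, so lifted cycles become cycles and surjectivity onto $\bar{E}$ follows. In your formulation the endpoints of $e_s$ are forced on you (they are the combinatorially determined stack boundaries), so you have no freedom to place them at such coincidence points, and there is no evident reason that $b_N$ should lie in $\operatorname{im}(S)+\bigcup_k\ker(M_s^k)$ in general. Reintroducing the freedom to choose interior basepoints at stable-coincidence points is the missing idea. (Also, Example \ref{TM} illustrates degeneration of the $\Psi_p$ construction without border forcing, which is a different issue from the one arising here.)
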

\begin{proof} The proof will use two key observations: (1) flattening of $X_{\phi_s}$ at $[b]$ under $f_{\phi_s}$ permits the lifting of certain 1-chains in the eventual range of $(f_{\phi_s})_*$ to 1-chains in $X_{\phi}$ and (2) while the lifts of cycles might not be cycles, it will follow from the nature of the relation $\asymp_s$ that generates $f_{\sigma}$ - for most $x,x'$, if $f_{\sigma}(x)=f_{\sigma}(x')$, then $f_{\phi}^k(x)=f_{\phi}^k(x')$ for large $k$ -  that, 
under a sufficiently high power of $(f_{\phi})_*$, the lifted cycle becomes a cycle also.
Dualizing the resulting surjection in homology yields the desired injection in cohomology.

To fix notation, let $\pi:\OP\to X_{\phi}$ and $\pi_s:\Omega_{\Phi_s}\to X_{\phi_s}$ be the natural maps with $\pi\circ\Phi=f_{\phi}\circ\pi$ and $\pi_s\circ\Phi_s=f_{\phi_s}\circ\pi_s$, so that $\hat{\pi}:\OP\to \inv f_{\phi}$ and $\hat{\pi}_s:\Omega_{\Phi_s}\to\inv f_{\phi_s}$ are homeomorphisms (by properness) conjugating $\Phi$ with $\hat{f}_{\phi}$ and $\Phi_s$ with $\hat{f}_{\phi_s}$, respectively. Then $\hat{\pi}_s\circ\Sigma=\hat{f}_{\sigma}\circ \hat{\pi}$. For each $i\in\mathcal{A}_{\phi_s}$, we choose a tiling $\bar{T}^i\in\Omega_{\Phi_s}$ so that the $i-th$ petal of $X_{\phi_s}$ is parameterized by $\pi_s(\bar{T}^i-t)$, $0\le t\le\bar{\omega}_i$.

From the definition of the relation $R$ and its transitive closure $\asymp_s$, for each stack $\bar{\tau}=\{\tau_1,\ldots,\tau_l\}$ for $\Phi$, the set of $x$ so that $\tau_i-x\sim_s\tau_j-x$ for all $i,j\in\{1,\ldots,l\}$ is dense in $int(spt(\bar{\tau}))$. For any such $x$ there is $k=k(x)$ so that $B_0[\Phi^{k}(\tau_i-x)]=B_0[\Phi^k(\tau_j-x)]$ for all $i,j\in\{1,\ldots,l\}$. It follows that for each $i\in\mathcal{A}_{\phi_s}$ we may choose $t_i^*\in(0,\bar{\omega}_i)$ so that $x_i^*:=\pi_s(\bar{T}^i-t_i^*)$ has the property that $f_{\phi}^k(f_{\sigma}^{-1}(x_i^*))$ is a singleton for some $k=k(i)\in\N$. Taking $K$ to be the max of the $k(i)$ we have, for each $i\in\mathcal{A}_{\phi_s}$, a point $x_i^*$ in the interior of the $i-th$ petal of $X_{\phi_s}$ so that $f_{\phi}^K(f_{\phi_s}^{-1}(x_i^*))$ is a singleton.

For each $i,j\in\mathcal{A}_{\phi_s}$ let $c_{i,j}$ be the singular 1-chain
in $X_{\phi_s}$ defined by 
$$c_{i,j}(t)= \left\{ \begin{array}{ll}
\pi_{s}(\bar{T}^i-\bar{\omega}_i-t), & \text{ if }
t_i^*-\bar{\omega}_i\le t\le 0 , \\[2mm]
\pi_{s}(\bar{T}^j-t), & \text{ if }0<t\le t_j^* .
\end{array} \right.$$
Every rational 1-chain in $X_{\phi_s}$ is homologous to one of the form $\sum r_{i,j}c_{i,j}$. Since $\phi_s$ is proper, there is $m\in\N$ so that, for each $i,j\in\mathcal{A}_{\phi_s}$, each factor of length two in $\phi_s^m(ij)$ is allowed for $\phi_s$. Then $(f_{\phi_s})^m_*(c_{i,j})$ is homologous to a chain $\sum s_{a,b}^{i,j}c_{a,b}$ with each $ab$ allowed for $\phi_s$. For each $ab$ allowed for $\phi_s$, let $\bar{T}^{ab}\in\Omega_{\Phi_s}$ be such that $\pi_s(\bar{T}^{ab}-t)=\pi_s(\bar{T}^a-\bar{\omega}_a-t)$ for $0\le t\le\bar{\omega}_a$ and  $\pi_s(\bar{T}^{ab}-t)=\pi_s(\bar{T}^b-t)$ for $0\le t\le\bar{\omega}_b$. Let $\tilde{c}_{a,b}(t):=\pi(\bar{T}^{ab}-\bar{\omega}_a-t)$, $t_a^*-\bar{\omega}_a\le t\le t_b^*$. 
Thus, for each $i,j\in\mathcal{A}_{\phi_s}$, there is a rational chain $\sum s_{a,b}^{i,j} \tilde{c}_{a,b}$ in $X_{\phi}$ with $(f_{\sigma})_*(\sum s_{a,b}^{i,j} \tilde{c}_{a,b})=(f_{\phi_s})^m_*(c_{i,j})$. Note that $(f_{\sigma})_*(\tilde{c}_{a,b})=c_{a,b}$ and, while the terminal point of $\tilde{c}_{a,b}$ may not coincide with the initial point of $\tilde{c}_{b,c}$, these points do coincide for $f_{\phi}^K\circ \tilde{c}_{a,b}$ and $f_{\phi}^K\circ\tilde{c}_{b,c}$.

Now if $c=\sum r_{i,j}c_{i,j}$ is a cycle in $X_{\phi_s}$, then $(f_{\phi_s})^{m+K}_*(c)$ is a cycle that is homologous with $(f_{\sigma})_*((f_{\phi})^K_*(\sum_{i,j}r_{i,j}\sum_{a,b} s_{a,b}^{i,j} \tilde{c}_{a,b}))$, and $(f_{\phi})^K_*(\sum_{i,j}r_{i,j}\sum_{a,b} s_{a,b}^{i,j} \tilde{c}_{a,b})$ is a cycle in $X_{\phi}$. Thus, the eventual range, $E$, of $(f_{\phi})_*:H_1(X_{\phi})\to H_1(X_{\phi})$ is mapped onto the eventual range, $\bar{E}$, of $(f_{\phi_s})_*:H_1(X_{\phi_s})\to H_1(X_{\phi_s})$ by $(f_{\sigma})_*$.
The dual homomorphism, $((f_{\sigma})_*)^*:\bar{E}^*=Hom(\bar{E},\Q)\to E^*=Hom(E,\Q)$, is thus injective. But $((f_{\sigma})_*)^*:\bar{E}^*\to E^*$ is naturally identified with the homomorphism $\widehat{f_{\sigma}^*}:\dir f_{\phi_s}^*\to \dir f_{\phi}^*$ induced by 
$f_{\sigma}^*:H^1(X_{\phi_s})\to H^1(X_{\phi})$ (through identification of $Hom(H_1(X_{\phi_s}),\Q)$ with $H^1(X_{\phi_s})$, $\dir f_{\phi_s}^*$ with the eventual range of $f_{\phi_s}$, and the latter with $\bar{E}^*$, etc.). By means of the natural isomorphisms of $H^1(\inv f_{\phi_s})$ and $H^1(\inv f_{\phi})$ with $\dir f_{\phi_s}^*$ and $\dir f_{\phi}^*$, resp., the homomorphism $(\hat{f}_{\sigma})^*:H^1(\inv f_{\phi_s})\to H^1(\inv f_{\phi})$ induced by $\hat{f}_{\sigma}$ is identified with  $\widehat{f_{\sigma}^*}$, and hence is injective. Finally, $ \Sigma^*=\hat{\pi}^*\circ(\hat{f}_{\sigma})^*\circ (\hat{\pi}_{s}^*)^{-1}$, and since both  $\hat{\pi}^*$ and
 $\hat{\pi}_{s}^*$ are isomorphisms $\Sigma^*$ is also injective.
\end{proof}

By the {\em eventual rank} of an $m\times m$ matrix $M$ we mean the rank of the $m$-th power of $M$: evrank$(M):=\rank(M^m)$.

\begin{lemma}\label{2d} Suppose that $\iota$ is a fixed point free involution of the alphabet $\mathcal{A}$ of a substitution $\alpha$ and that $\alpha\circ\iota=\iota\circ\alpha$. If the degree of the stretching factor $\lambda$ of $\alpha$ is $d$ and the norm of $\lambda$ is odd, then the eventual rank of the abelianization of $\alpha$ is at least $2d$.
\end{lemma}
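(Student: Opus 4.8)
The plan is to use the involution $\iota$ to split the abelianization $M$ of $\alpha$ over $\Q$ into a symmetric and an antisymmetric block, to observe that these two blocks coincide mod $2$, and then to use the odd-norm hypothesis to force the minimal polynomial of $\lambda$ to contribute a full $d$ nonzero eigenvalues to \emph{each} block, so that the two blocks together account for $2d$ nonzero eigenvalues.

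First I would set up the decomposition. Since $\iota$ is a fixed-point-free involution, $\mathcal{A}$ has even cardinality $2n$, and the permutation matrix $P$ realizing $\iota$ satisfies $P^2=I$ with both $\pm1$-eigenspaces of dimension $n$. The relation $\alpha\circ\iota=\iota\circ\alpha$ translates to $M_{b,\iota a}=M_{\iota b,a}$, i.e. $MP=PM$, so $M$ preserves the rational splitting $\Q^{2n}=V_+\oplus V_-$ into the $\pm1$-eigenspaces of $P$. Writing $M_+=M|_{V_+}$ and $M_-=M|_{V_-}$ in the integral bases $\{e_a+e_{\iota a}\}$ and $\{e_a-e_{\iota a}\}$ (one vector per $\iota$-orbit, with representatives $a_i$ for sources and $b_j$ for targets), a direct computation gives $(M_+)_{ji}=M_{b_j,a_i}+M_{\iota b_j,a_i}$ and $(M_-)_{ji}=M_{b_j,a_i}-M_{\iota b_j,a_i}$, so $(M_+)_{ji}-(M_-)_{ji}=2M_{\iota b_j,a_i}$. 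Hence $M_+\equiv M_-\pmod 2$, and therefore $\chi_{M_+}\equiv\chi_{M_-}\pmod 2$, where $\chi$ denotes the characteristic polynomial. Since $\alpha$ is primitive, the positive Perron--Frobenius eigenvector $v$ for $\lambda$ is, by uniqueness of the positive eigendirection, fixed by $P$ (as $Pv$ is again positive and $\lambda$-eigen), so $v\in V_+$ and $\lambda$ is an eigenvalue of $M_+$. As the minimal polynomial $p$ of $\lambda$ is monic of degree $d$ over $\Z$, Gauss's lemma gives $p\mid\chi_{M_+}$ in $\Z[x]$, say $\chi_{M_+}=p\,q$ with $q$ monic of degree $n-d$.

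The eventual rank is additive over the splitting, $\mathrm{evrank}(M)=\mathrm{evrank}(M_+)+\mathrm{evrank}(M_-)$, and for each block it equals $n$ minus the multiplicity of $0$ as a root of that block's characteristic polynomial. Since $N(\lambda)$ is odd, in particular $p(0)=(-1)^dN(\lambda)\neq 0$, so $0$ is not a root of $p$; thus the multiplicity of $0$ in $\chi_{M_+}=p\,q$ is that of $q$, at most $n-d$, giving $\mathrm{evrank}(M_+)\ge d$. For $M_-$ I reduce mod $2$: the multiplicity of $0$ as a root of $\chi_{M_-}$ over $\Q$ is at most its multiplicity over $\mathbb{F}_2$, and $\bar\chi_{M_-}=\bar\chi_{M_+}=\bar p\,\bar q$. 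Here the \emph{oddness} of $N(\lambda)$ is decisive: $\bar p(0)\neq 0$ in $\mathbb{F}_2$, so $x\nmid\bar p$ and the multiplicity of $0$ in $\bar p\,\bar q$ is at most $\deg\bar q=n-d$. Hence $\mathrm{evrank}(M_-)\ge d$ as well, and adding the two bounds yields $\mathrm{evrank}(M)\ge 2d$.

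I expect the congruence $M_+\equiv M_-\pmod 2$ to be the conceptual heart of the argument, together with the point where the parity hypothesis must be used with care. It is precisely the condition $x\nmid\bar p$ (equivalently, $N(\lambda)$ odd) that lets me transfer the control on the vanishing of the characteristic polynomial at $0$ from $M_+$ to $M_-$, even though over $\Q$ I only know $p\mid\chi_{M_+}$ and have no direct handle on $\chi_{M_-}$. Were the norm even, $\bar p$ could be divisible by $x$ and the bound on $M_-$ would collapse, so the parity assumption is genuinely essential; the factor $2$ in $2d$ comes exactly from seeing the $d$ conjugates of $\lambda$ appear once in each of the two $P$-eigenblocks.
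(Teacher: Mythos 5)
Your proof is correct and follows essentially the same route as the paper's: split $M$ via the involution into blocks conjugate to $X+Y$ and $X-Y$, observe they agree mod $2$, and use the oddness of $N(\lambda)$ to force eventual rank at least $d$ in each block. The only cosmetic differences are that you realize the splitting as a direct sum of $P$-eigenspaces rather than a block triangularization, and you place $\lambda$ in the $+$ block via Perron--Frobenius uniqueness where the paper identifies $X+Y$ with the abelianization of the induced substitution on $\iota$-orbits.
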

\begin{proof} The hypotheses imply that the abelianization  $M$ of $\alpha$ takes the form
$M=\begin{pmatrix}
X & Y \cr Y & X \end{pmatrix}$ where $X+Y$ is the abelianization of the substitution $\alpha_p$ induced by $\alpha$ on the alphabet of pairs $\mathcal{A}_p:=\{\{a,\iota(a)\}:a\in\mathcal{A}\}$. Since $\lambda$ is also an eigenvalue of $X+Y$ and the norm of $\lambda$ is odd, the eventual rank of $X+Y(\bmod2)$ is at least $d$. As $X-Y\equiv X+Y(\bmod2)$, the eventual rank of  $\begin{pmatrix}
X +Y& Y \cr O & X-Y \end{pmatrix}=\begin{pmatrix}
I & O \cr -I & I \end{pmatrix}\begin{pmatrix}
X & Y\cr Y & X \end{pmatrix}\begin{pmatrix}
I & O \cr I & I\end{pmatrix}$ is at least $2d$. Thus $M$ has eventual rank at least $2d$.
\end{proof}

We recall a procedure developed in \cite{BDcohomology} for computing the
cohomology of a one-dimensional substitution tiling space. Given a substitution
$\psi$, there is a pair of $1$-dimensional complexes $S\subset K$ and a
continuous $f:(K,S)\rightarrow (K,S)$ so that
$\check{H}^1(\Omega_{\Psi}) \simeq \dir( f^*:H^1(K)\rightarrow
H^1(K))$. The map $f$ induces a homomorphism of the exact sequence for
the pair $(K,S)$ yielding the commuting diagram with exact rows

\smallskip

\begin{picture}(20, 75)(-80, -35)

\put(-50, 20){$0 \arrow \tilde{H}^0(S) \stackrel{\delta}\arrow {H}^1(K, S)
\arrow {H}^1(K ) \arrow {H}^1(S ) \arrow 0$
}

\put(-50,-2){$0 \arrow \tilde{H}^0(S) \stackrel{\delta}\arrow {H}^1(K, S)
\arrow {H}^1(K ) \arrow {H}^1(S ) \arrow 0$
}

\put(-35,15){\vector(0,-1){10}}

\put(-15,15){\vector(0,-1){10}}
\put(7,15){\vector(0,-1){10}}
\put(25,15){\vector(0,-1){10}}

\put(-32, 10){$f^*_0$}
\put(-12, 10){$f^*_1$}
\put(10, 10){$f^*_2$}
\put(28, 10){$f^*_3$}

\end{picture}

The homomorphisms $f^*_i$ are induced by $f$, and, in particular,
$f^*_1$ is represented by the transpose of the abelianization,
$M_{\psi}^t$ of $\psi$.  {\parindent=0pt Taking }direct limits leads to an
exact sequence 

\begin{equation} \label{exact seq.} 0 \arrow G_0 \stackrel{\vec{\delta}}\arrow G_1 \arrow
G_2 \arrow G_3 \arrow 0
\end{equation}

in which $G_1 \simeq \dir M_{\psi}^t$,
$G_2 \simeq H^1(\Omega_{\Psi})$, and $G_0 \simeq \Q^k$, $k+1$ being
the number of connected components in the eventual range of
$f:S\rightarrow S$. An upper bound for $k+1$ is given by the number $n(\psi)$ of
bi-infinite words allowed for $\psi$ and periodic under $\psi$. Thus: $$\dim(H^1(\Omega_{\Psi}))=\dim(G_2)=\dim(G_3)+\dim(\coker(
\vec{\delta}))=\dim(G_3)+\dim(G_1)-\dim(G_0)$$ $$\ge \text{evrank}(M_{\psi}^t)-(n(\psi)-1).$$
In summary:

\begin{lemma}\label{coh bound} If the substitution $\psi$ has exactly $n(\psi)$ allowed bi-infinite words that are periodic under $\psi$, then $\dim(H^1(\Omega_{\Psi})\ge \text{evrank}(M)-n(\psi)+1$, where $M$ is the abelianization of $\psi$.
\end{lemma}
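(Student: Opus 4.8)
The plan is to assemble into a single chain of (in)equalities the structural facts recalled in the paragraph preceding the statement. First I would invoke the construction of \cite{BDcohomology}: build the pair of one-dimensional complexes $S\subset K$ together with the self-map $f\colon(K,S)\to(K,S)$ that yields the identification $\check H^1(\Omega_\Psi)\simeq\dir(f^*\colon H^1(K)\to H^1(K))$, and write down the map of long exact sequences of the pair induced by $f$. Passing to direct limits produces the four-term exact sequence \eqref{exact seq.},
$$0\arrow G_0\stackrel{\vec{\delta}}\arrow G_1\arrow G_2\arrow G_3\arrow 0,$$
in which $G_1\simeq\dir M_\psi^t$, $G_2\simeq H^1(\Omega_\Psi)$, and $G_0\simeq\Q^k$ with $k+1$ the number of connected components of the eventual range of $f\colon S\to S$.

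Next I would pin down the two dimensions that feed the bound. For $G_1$, the direct limit of the single endomorphism $M_\psi^t$ of $\Q^m$ has dimension equal to the stabilized rank of its iterates; since $\rank((M^t)^j)=\rank(M^j)$ for every $j$, this gives $\dim G_1=\text{evrank}(M_\psi^t)=\text{evrank}(M)$. For $G_0$, I would use the bound from \cite{BDcohomology} that the number $k+1$ of connected components of the eventual range of $f|_S$ is at most $n(\psi)$, the number of $\psi$-periodic allowed bi-infinite words, so that $\dim G_0=k\le n(\psi)-1$.

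Finally I would read off the inequality from exactness of \eqref{exact seq.} alone. Injectivity of $\vec{\delta}$ together with exactness at $G_1$ gives $\dim(\coker\vec{\delta})=\dim G_1-\dim G_0$, and exactness at $G_2$ and $G_3$ gives $\dim G_2=\dim G_3+\dim(\coker\vec{\delta})$; combining these and discarding the nonnegative term $\dim G_3$ yields
$$\dim H^1(\Omega_\Psi)=\dim G_2=\dim G_3+\dim G_1-\dim G_0\ge\text{evrank}(M)-n(\psi)+1,$$
which is the claim.

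No genuinely hard analytic step arises here, since the complex $(K,S)$, the exact sequence, and the identification of the limit groups are all quoted from \cite{BDcohomology}. The two points needing care are the dimension counts: verifying that transposition and passage to powers leave the eventual rank unchanged, so that $\dim(\dir M_\psi^t)=\text{evrank}(M)$, and the combinatorial bound $k+1\le n(\psi)$ identifying connected components of the eventual range of $f|_S$ with periodic bi-infinite words. The latter is the genuinely substitution-specific ingredient and is the step I would expect to be the most delicate.
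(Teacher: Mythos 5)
Your argument is exactly the paper's: the lemma is stated as a summary of the preceding paragraph, which invokes the $(K,S)$ construction and exact sequence from \cite{BDcohomology}, identifies $\dim G_1=\text{evrank}(M_\psi^t)$ and $\dim G_0=k\le n(\psi)-1$, and reads off the bound from exactness while discarding $\dim G_3\ge 0$. The proposal is correct and takes essentially the same route, including the two dimension counts you flag as the points needing care.
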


\begin{Theorem} \label{big H^1} Suppose that $\phi$ is a Pisot substitution on letters with stretching factor $\lambda$. If the degree of $\lambda$ is $d$, the norm of $\lambda$ is odd, and $cr(\Phi)=2$, then $\dim(H^1(\OP))\ge2d-1$.
\end{Theorem}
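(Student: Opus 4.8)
\smallskip

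The plan is to produce, inside $H^1(\OP)$, the cohomology of an auxiliary one-dimensional substitution tiling space that carries a fixed-point-free involution commuting with its substitution, and then to feed that involution into Lemma~\ref{2d} and the counting estimate of Lemma~\ref{coh bound}. First I would reduce to the case that $\phi$ is proper, since passing to a proper version preserves $\dim H^1(\OP)$, the dilation $\lambda$, its degree $d$ and norm, and $cr(\Phi)$. Because $cr(\Phi)=2$, the $\R$-action does not have pure discrete spectrum ((5) of Theorem~\ref{tools}), so in the one-dimensional setting $\OP$ has no almost automorphic sub action. Theorem~\ref{minimal model} then applies: it yields a proper Pisot substitution $\Phi_s$ (on stacks) with $\approx_s$ trivial on $\Omega_{\Phi_s}$, a semiconjugacy $\Sigma\colon\OP\to\Omega_{\Phi_s}$ factoring $g$ as $g=g_s\circ\Sigma$, and the same dilation $\lambda$. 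Since $g$ is a.e.\ $2$-to-one ((1) of Theorem~\ref{tools}) and $\Sigma$ is a.e.\ one-to-one, $g_s$ is a.e.\ $2$-to-one, i.e.\ $cr(\Phi_s)=2$. By Proposition~\ref{injective on H^1} the map $\Sigma^*\colon H^1(\Omega_{\Phi_s})\to H^1(\OP)$ is injective, so $\dim H^1(\OP)\ge\dim H^1(\Omega_{\Phi_s})$, and it suffices to bound the latter below by $2d-1$.

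Next I would set $\Psi:=\Phi_s$ and pass to the ordered-pair space. By Lemma~\ref{op} the map $\Sigma_{op}\colon\Omega_{\Psi}\to\Omega_{\Psi_{op}}$ is an isomorphism, so $\dim H^1(\Omega_{\Phi_s})=\dim H^1(\Omega_{\Psi_{op}})$; here $\Psi_{op}$ is the substitution on $rpd$ ordered tile pairs, and the order-reversing involution $\gamma$ is a fixed-point-free involution of the alphabet of the associated letter substitution $\psi_{op}$ commuting with $\psi_{op}$. As $\Omega_{\Psi_{op}}$ shares the dilation $\lambda$, which has degree $d$ and odd norm, Lemma~\ref{2d} gives $\mathrm{evrank}(M_{\psi_{op}})\ge 2d$. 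Feeding this into Lemma~\ref{coh bound} yields
$$\dim H^1(\Omega_{\Psi_{op}})\ \ge\ \mathrm{evrank}(M_{\psi_{op}})-n(\psi_{op})+1\ \ge\ 2d-n(\psi_{op})+1,$$
where $n(\psi_{op})$ is the number of allowed bi-infinite words periodic under $\psi_{op}$. Thus the theorem follows once I show $n(\psi_{op})=2$.

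The main obstacle is exactly this count, and specifically the \emph{upper} bound $n(\psi_{op})\le 2$. Because $\gamma$ is fixed-point-free and commutes with $\psi_{op}$, it carries periodic words to periodic words with no fixed word (a word equal to its $\gamma$-image would have to be built from $\gamma$-fixed letters), so the periodic words fall into $\gamma$-orbits of size two and $n(\psi_{op})$ is even; this already gives $n(\psi_{op})\ge 2$. For the reverse inequality I would use the order-forgetting map $F$, which by Theorem~\ref{rpd p space} exhibits $\Omega_{\Psi_{op}}\cong\Omega_{\Psi}$ as a two-to-one cover of the maximal pure discrete factor $\Omega_{\Psi_p}$, with $\gamma$ interchanging the two sheets. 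Every $\psi_{op}$-periodic word projects to a $\psi_p$-periodic word, and each $\psi_p$-periodic word lifts to precisely the two words of a single $\gamma$-orbit, so $n(\psi_{op})=2\,n(\psi_p)$. It therefore remains to see that the downstairs substitution has a unique periodic word, $n(\psi_p)=1$.

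I expect this last step to be the delicate point. One must verify that the $rpd$-pair substitution inherits enough of the properness of $\phi_s$ to force a single periodic bi-infinite word on the pure discrete factor (equivalently, that the eventual range of the branch-point map entering Lemma~\ref{coh bound} for $\psi_{op}$ is a single $\gamma$-orbit consisting of two points), and it is precisely here that coincidence rank two is used, through the canonical, globally defined $rpd$ twin supplied by Lemma~\ref{distal twin}: the unique periodic configuration of $\Omega_{\Psi_p}$ pulls back to the two orderings of one periodic $rpd$ pair, accounting for exactly two periodic words of $\psi_{op}$ and hence the single loss of dimension. Combining $n(\psi_{op})=2$ with the displayed inequality gives $\dim H^1(\OP)\ge\dim H^1(\Omega_{\Psi_{op}})\ge 2d-1$.
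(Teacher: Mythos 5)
Your proposal is correct and follows essentially the same route as the paper: reduce to a proper substitution, pass to $\Omega_{\Phi_s}$ via Proposition \ref{injective on H^1}, identify it with $\Omega_{(\Phi_s)_{op}}$ via Lemma \ref{op}, apply Lemmas \ref{2d} and \ref{coh bound}, and pin down $n((\phi_s)_{op})=2$ by projecting periodic words through the exactly $2$-to-$1$ map $\Sigma_p$ onto the unique periodic configuration of the proper substitution $(\Phi_s)_p$. The step you flag as delicate is handled in the paper exactly as you sketch it, so no gap remains.
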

\begin{proof} 
By rewriting and passing to a power, we may assume that the substitution $\phi$ is proper. The substitution $\phi_s$ is then also proper. Let $(\phi_s)_p$ and $(\phi_s)_{op}$ be the symbolic substitutions corresponding to the tile substitutions $(\Phi_s)_p$ and $(\Phi_s)_{op}$ on $rpd$ tile pairs and $rpd$ ordered tile pairs. There is a fixed point free involution $\iota:\mathcal{A}_{op}\to\mathcal{A}_{op}$ with $\iota\circ\phi_{op}=\phi_{op}\circ\iota$. The substitution $(\phi_s)_p$ is proper but $(\phi_s)_{op}$ is definitely not. Rather, there are exactly two allowed bi-infinite words that are periodic (fixed) under $(\phi_s)_{op}$. To see that this is the case, let $T$ be the tiling, fixed by $\Phi_s$, that follows the pattern of the unique fixed (by $\phi_s$) word $\ldots b.a\ldots$ and let  $T'$ be its $rpd$ twin. Then $\{T,T,\}\in \Omega_{\Phi_s}/\sim_{rpd}\simeq\Omega_{(\Phi_s)_p}$ is the corresponding unique element of $\Omega_{(\Phi_s)_p}$ fixed by $(\Phi_s)_p$ having a tile with 0 as an end point.
If $w$ is an allowed bi-infinite word for $(\phi_s)_{op}$ that is periodic under $(\phi_s)_{op}$, let $\bar{T}_{w}\in\Omega_{(\Phi_s)_{op}}\simeq\Omega_{\Phi_s}$ be the corresponding tiling that is periodic under $(\Phi_s)_{op}$. Then $\Sigma_p(\bar{T}_w)$, being $(\Phi_s)_p$-periodic with 0 an endpoint of a tile, must equal $\{T,T'\}$. Thus,
$\bar{T}_w\in\Sigma_p^{-1}(\{T,T'\})=\{(T,T'),(T',T)\}$ (by Theorem \ref{rpd p space}, $\Sigma_p$ is exactly 2-to-1 everywhere) and we see that $w$ must be either the fixed 
bi-infinite word corresponding to $(T,T')$ or to $(T',T)$. That is, $n((\phi_s)_{op})=2$.

We have:
$$\dim(H^1(\OP))\ge \dim(H^1(\Omega_{\Phi_s}))\,\, \text{ (by Proposition \ref{injective on H^1})}$$ 
$$=\dim(H^1(\Omega_{(\Phi_s)_{op}})\,\,\text{ (by Theorem \ref{minimal model} and Lemma \ref{op})}$$ $$\ge \text{evrank}(M_{(\phi_s)_{op}})-n((\phi_s)_{op})+1\,\, \text{ (by Lemma \ref{coh bound})}$$ $$=\text{evrank}(M_{(\phi_s)_{op}})-1$$
$$\ge 2d-1,\text{ by Lemma \ref{2d}}.$$
\end{proof}

In \cite{BBJS} the substitution $\phi$ is termed {\em homological Pisot} provided the stretching factor $\lambda$ of $\phi$ is a Pisot number and the dimension of $H^1(\OP)$
equals the degree $d$ of $\lambda$,  and it is proved that if $\phi$ is homological Pisot of degree $d=1$, then $cr(\Phi)$ divides the norm of $\lambda$. Together with this, Theorem  \ref{big H^1} thus establishes the Coincidence Rank Conjecture (Conjecture \ref{CRC}) for coincidence rank 2 and arbitrary degree.

\begin{corollary} The Coincidence Rank Conjecture is true for $cr(\Phi)=2$.
\end{corollary}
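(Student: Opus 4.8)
The plan is to obtain the corollary as an immediate packaging of Theorem~\ref{big H^1} together with the known degree-one case, phrased as a parity dichotomy. Unwinding Conjecture~\ref{CRC} when $cr(\Phi)=2$, what must be shown is: if $\Phi$ is a $1$-dimensional substitution with Pisot dilation $\lambda$ of degree $d$, if $cr(\Phi)=2$, and if the homological Pisot hypothesis $\dim(H^1(\OP))=d$ holds, then $2$ divides the norm of $\lambda$. First I would argue the contrapositive on the parity of the norm: assume $cr(\Phi)=2$ and $\dim(H^1(\OP))=d$, and suppose the norm of $\lambda$ were odd, aiming to contradict $\dim(H^1(\OP))=d$.

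The key step is to feed exactly these hypotheses into Theorem~\ref{big H^1}. With $\lambda$ of degree $d$, norm odd, and $cr(\Phi)=2$, that theorem delivers the unconditional lower bound $\dim(H^1(\OP))\ge 2d-1$. Whenever $d\ge 2$ we have $2d-1>d$, which is incompatible with the assumed equality $\dim(H^1(\OP))=d$; hence the norm cannot be odd and $2$ must divide it. So for every degree $d\ge 2$ the substance of the corollary is already carried by Theorem~\ref{big H^1}, whose engine is the $\bmod\,2$ block-matrix computation of Lemma~\ref{2d} (odd norm forcing eventual rank at least $2d$), transported to cohomology through the factorization of the maximal equicontinuous factor map constructed in Theorem~\ref{minimal model} and Theorem~\ref{rpd p space}, the cohomology injection of Proposition~\ref{injective on H^1}, and the bound of Lemma~\ref{coh bound}.

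The main obstacle, and the one genuinely external ingredient the corollary must import, is the degenerate case $d=1$, where Theorem~\ref{big H^1} gives only $\dim(H^1(\OP))\ge 2d-1=1$, which is vacuous against $\dim(H^1(\OP))=1$. I would therefore dispose of $d=1$ separately by invoking \cite{BBJS}, which treats precisely the homological Pisot situation in degree one and shows there that $cr(\Phi)$ divides the norm of $\lambda$; in particular $cr(\Phi)=2$ divides it. Combining the two ranges, namely \cite{BBJS} for $d=1$ and Theorem~\ref{big H^1} for $d\ge 2$, covers all degrees and finishes the proof. I would close by observing that the whole argument hinges on the parity reduction: an odd norm forces a nontrivial mod-$2$ reduction of the pairing, and it is exactly this mechanism, special to the double cover $\Sigma_p$, that has no evident analogue at higher coincidence rank.
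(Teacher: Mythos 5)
Your proposal is correct and follows essentially the same route as the paper: the parity contrapositive via Theorem~\ref{big H^1} for $d\ge 2$ (odd norm would force $\dim(H^1(\OP))\ge 2d-1>d$), combined with the degree-one result of \cite{BBJS} to cover the case $d=1$ where the bound is vacuous. This is precisely the argument sketched in the paragraph preceding the corollary.
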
  

Consequently:

\begin{corollary} Any counterexample to the Homological Pisot Conjecture must have coincidence rank greater than 2.
\end{corollary}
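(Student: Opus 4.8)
The plan is to obtain this as an immediate consequence of the preceding corollary, which establishes the Coincidence Rank Conjecture in the case $cr(\Phi)=2$. First I would unpack what a counterexample to the Homological Pisot Conjecture is: a $1$-dimensional substitution $\Phi$ whose dilation $\lambda$ is a Pisot \emph{unit} of degree $d$, with $\dim H^1(\OP)=d$, and yet whose $\R$-action on $\OP$ fails to have pure discrete spectrum. By (5) of Theorem \ref{tools}, this failure of pure discreteness is exactly the statement that $cr(\Phi)\ge 2$. Hence a counterexample automatically satisfies $cr(\Phi)\ge 2$, and the entire content of the corollary is to exclude the single borderline value $cr(\Phi)=2$.

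So I would argue by contradiction: suppose $\Phi$ is a counterexample with $cr(\Phi)=2$. Such a $\Phi$ is homological Pisot in the sense used for Conjecture \ref{CRC}, since its dilation is Pisot and $\dim H^1(\OP)$ equals the degree of $\lambda$. Applying the preceding corollary (the CRC for coincidence rank $2$), we conclude that $cr(\Phi)=2$ must divide the norm of $\lambda$. But $\lambda$ is a \emph{unit}, so its norm is $\pm 1$, and $2$ does not divide $1$. This contradiction shows $cr(\Phi)\ne 2$, and combined with $cr(\Phi)\ge 2$ it gives $cr(\Phi)>2$, as required. I expect no genuine obstacle here; the only point requiring care is recognizing that the ``unit'' hypothesis of the Homological Pisot Conjecture is precisely what forces the norm of $\lambda$ to be coprime to $2$.

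For completeness I would note the alternative, more self-contained route through Theorem \ref{big H^1}: since a unit has norm $\pm 1$, which is odd, the hypothesis $cr(\Phi)=2$ would give $\dim H^1(\OP)\ge 2d-1$, directly clashing with $\dim H^1(\OP)=d$ as soon as $d\ge 2$. The weakness of this approach is the degree-$1$ case, where $2d-1=d$ and no contradiction emerges from the cohomological lower bound alone; there one must still invoke the degree-$1$ case of \cite{BBJS}. Routing the argument through the CRC corollary is cleaner precisely because that corollary already absorbs the degree-$1$ result, handling all degrees uniformly.
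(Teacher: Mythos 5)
Your proposal is correct and follows the same route the paper intends: the paper derives this corollary directly ("Consequently") from the preceding corollary establishing the Coincidence Rank Conjecture for $cr(\Phi)=2$, using exactly the observation that a Pisot unit has norm $\pm 1$, which $2$ cannot divide. Your aside about the degree-$1$ subtlety in the alternative route via Theorem \ref{big H^1} is a accurate and worthwhile remark, but the main argument matches the paper's.
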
 

One-dimensional tilings $T\ne T'$ are {\em forward (backward) asymptotic} if $\lim_{t\to\infty}d(T-t,T'-t)=0$ (resp., $\lim_{t\to-\infty}d(T-t,T'-t)=0$). The tilings $T_0,T_1,\ldots,T_{2n-1}$ form an {\em asymptotic cycle} if $T_{2i}$ is forward asymptotic to $T_{2i+1}$ and $T_{2i+1}$ is backward asymptotic to $T_{2i+2}$ for $i=0,\ldots,n-1$, subscripts taken $\mod(2n)$. If $\psi$ is an irreducible Pisot substitution
and the rank of $H^1(\Omega_{\Psi})$ is greater than the degree of the stretching factor of $\psi$,
then the group $G_3$ of the exact sequence \ref{exact seq.} must be non-trivial. This can only happen if $\Omega_{\Psi}$ has an asymptotic cycle (see \cite{BD3}). We thus have:

\begin{corollary} Any counterexample to the Pisot Substitution Conjecture must have an asymptotic cycle or coincidence rank greater than two.
\end{corollary}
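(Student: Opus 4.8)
The plan is to argue by contradiction against the two escape clauses in the conclusion, reducing everything to the single case $cr(\Phi)=2$ with no asymptotic cycle, and then to let Theorem~\ref{big H^1} collide head-on with the cohomology bound forced by the absence of an asymptotic cycle.

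First I would record the trichotomy for the coincidence rank of a counterexample. Let $\phi$ be a $1$-dimensional irreducible, unimodular, Pisot substitution with $\OP$ \emph{not} pure discrete; this is exactly what it means to be a counterexample to the PSC (Conjecture~\ref{PC}). By (5) of Theorem~\ref{tools}, failure of pure discreteness is equivalent to $cr(\Phi)\ge 2$. If $cr(\Phi)>2$, the conclusion ``coincidence rank greater than two'' already holds, so the entire content is the case $cr(\Phi)=2$, where I must produce an asymptotic cycle.

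So I would assume $cr(\Phi)=2$ and, for contradiction, that $\OP$ has no asymptotic cycle, and then extract two incompatible estimates on $\dim H^1(\OP)$. On one side, unimodularity means the norm of $\lambda$ is $\pm1$, hence odd; moreover a Pisot unit cannot have degree $1$ (a degree-one Pisot number is a rational integer exceeding $1$, which is never a unit), so $d=\deg\lambda\ge 2$. Theorem~\ref{big H^1} then yields $\dim H^1(\OP)\ge 2d-1$, and $2d-1>d$ because $d\ge 2$. On the other side, $\phi$ is irreducible Pisot with no asymptotic cycle, so the dichotomy recalled just before the corollary, read in its contrapositive form, forces the group $G_3$ of the exact sequence~\eqref{exact seq.} to be trivial and hence $\dim H^1(\OP)\le d$; since $\dim H^1(\OP)\ge d$ always, this gives $\dim H^1(\OP)=d$. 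The two conclusions $\dim H^1(\OP)>d$ and $\dim H^1(\OP)=d$ contradict each other, so an asymptotic cycle must exist, completing the $cr(\Phi)=2$ case and hence the proof.

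The point I expect to require the most care is not a calculation but the bookkeeping that makes the contradiction strict: one must verify that the lower bound $2d-1$ genuinely \emph{exceeds} the minimal value $d$, which fails precisely when $d=1$, exactly the degree excluded by unimodularity. It is also essential to see that \emph{both} hypotheses of the PSC are doing work here: unimodularity supplies the oddness of the norm and the bound $d\ge 2$ that feed Theorem~\ref{big H^1}, while irreducibility is what licenses the passage from ``no asymptotic cycle'' through $G_3=0$ to minimal cohomology. No new construction is needed; the main obstacle is simply invoking the asymptotic-cycle/cohomology equivalence in the correct direction and assembling it cleanly with Theorem~\ref{big H^1}.
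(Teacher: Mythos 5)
Your argument is correct and is essentially the paper's own: the paper derives the corollary by noting that a unimodular Pisot counterexample with $cr(\Phi)=2$ has odd norm and degree $d\ge 2$, so Theorem \ref{big H^1} forces $\dim H^1(\OP)\ge 2d-1>d$, and then invokes the statement preceding the corollary (irreducible Pisot with $\dim H^1$ exceeding the degree implies $G_3\ne 0$, which requires an asymptotic cycle). Your contrapositive packaging and your explicit check that $d=1$ is excluded by unimodularity are just careful spellings-out of the same reasoning.
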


\section*{Acknowledgement} Thanks to Lorenzo Sadun for a simplified proof of Lemma \ref{2d}.

\end{document}